\documentclass[11pt,reqno]{amsart}

\usepackage{amssymb}
\usepackage{amssymb}
\usepackage{amsmath}
\usepackage{graphicx}

\newcommand{\C}{\mathbb{C}}

\newcommand{\N}{\mathbb{N}}
\newcommand{\R}{\mathbb{R}}

\newcommand{\OO}{\mathcal O}
\newcommand{\eps}{\varepsilon}

\DeclareMathOperator{\spec}{sp}

\newtheorem{theorem}{Theorem}[section]
\newtheorem{lemma}[theorem]{Lemma}
\newtheorem{corollary}[theorem]{Corollary}
\newtheorem{proposition}[theorem]{Proposition}
\newtheorem{definition}[theorem]{Definition}
\newtheorem{Remark}[theorem]{Remark}
\newtheorem{Example}[theorem]{Example}

\newenvironment{example}{\begin{Example}\rm}{\end{Example}}
\numberwithin{equation}{section}

\title{An equilibrium problem for the limiting eigenvalue distribution of
banded Toeplitz matrices}
\author{Maurice Duits \and Arno B.J. Kuijlaars}
\thanks{Department of Mathematics, Katholieke Universiteit Leuven,
 Celestijnenlaan 200B, 3001 Leuven, Belgium.
 (maurice.duits@wis.kuleuven.be,
arno.kuijlaars@wis.kuleuven.be). The first author is a research
assistant of the Fund for Scientific Research -- Flanders. The
authors were supported by the European Science Foundation Program
MISGAM. The second author is supported by FWO-Flanders project
G.0455.04, by K.U. Leuven research grant OT/04/21, by Belgian
Interuniversity Attraction Pole NOSY P06/02, and by a grant from the
Ministry of Education and Science of Spain, project code
MTM2005-08648-C02-01. }

\begin{document}

\maketitle

\begin{abstract} We study the limiting eigenvalue distribution of
$n\times n$ banded Toeplitz matrices as $n\to \infty$. From
classical results of Schmidt-Spitzer and Hirschman it is known that
the eigenvalues accumulate on a special curve in the complex plane
and the normalized eigenvalue counting measure converges weakly to a
measure on this curve as $n\to\infty$. In this paper, we
characterize the limiting measure in terms of an equilibrium
problem. The limiting measure is one component of the unique vector
of measures that minimes an energy functional defined on admissible
vectors of measures. In addition, we show that each of the other
components is the limiting measure of the normalized counting
measure on certain generalized eigenvalues.
\end{abstract}

\pagestyle{myheadings} \thispagestyle{plain} \markboth{MAURICE DUITS
AND ARNO B.J. KUIJLAARS}{EIGENVALUES OF BANDED TOEPLITZ MATRICES}

\section{Introduction} \label{section1}

For an integrable function $a :  \{z\in
\mathbb C \mid |z|=1\} \to \mathbb C$ defined on the unit circle in the complex plane,
the $n\times n$ Toeplitz matrix $T_n(a)$ with symbol $a$ is
defined by
\begin{equation}
   \big(T_n(a)\big)_{jk}=a_{j-k},
    \qquad j,k = 1, \ldots, n,
\end{equation}
where $a_k$ is the $k$th Fourier coefficient of $a$,
\begin{equation}
   a_k =\frac{1}{2\pi} \int_0^{2\pi} a({\rm e}^{{\rm i} \theta}){\rm e}^{-{\rm i} k\theta}\ {\rm d}\theta.
\end{equation}
In this paper we study banded Toeplitz matrices for which the symbol has
only a finite number of non-zero Fourier coefficients. We
assume that there exist $p,q\geq 1$ such that
\begin{equation} \label{symbola}
   a(z)=\sum_{k=-q}^pa_kz^k, \qquad a_p \neq 0, \quad a_{-q} \neq 0.
\end{equation}
Thus  $T_n(a)$ has at most $p+q+1$ non-zero diagonals.
As in \cite[p.~263]{Bottcher-Grudsky}, we also assume without
loss of generality that
\begin{equation} \label{assumption}
  \textrm{g.c.d. } \{k \in \mathbb Z \mid a_k\neq0\}=1.
\end{equation}

We are interested in the limiting behavior of the spectrum of $T_n(a)$
as $n \to \infty$. We use $\spec T_n(a)$ to denote the
spectrum of $T_n(a)$:
\[ \spec T_n(a) = \{ \lambda \in \mathbb C \mid
    \det( T_n(a)-\lambda I) = 0\} \]
Spectral properties of banded Toeplitz
matrices are the topic of the recent book \cite{Bottcher-Grudsky} by
B\"ottcher and Grudsky. We  will refer to this book frequently,
in particular to Chapter 11 where the limiting behavior of the
spectrum is discussed.

The limiting behavior  of $\spec T_n(a)$ was characterized
by Schmidt and Spitzer \cite{Schmidt-Spitzer}. They
considered the set
\begin{equation}   \liminf_{n\to \infty} \spec T_n(a),
\end{equation}
consisting of all $\lambda\in \mathbb C$ such that there exists a
sequence $\{\lambda_n\}_{n\in \N}$, with $\lambda_n\in
\spec T_n(a)$, converging to $\lambda$, and the set
\begin{equation}
  \limsup_{n\to \infty} \spec T_n(a),
\end{equation}
consisting of all $\lambda$ such that there exists a sequence
$\{\lambda_n\}_{n\in \N}$, with $\lambda_n\in \spec T_n(a)$, that has
a  subsequence converging to $\lambda$. Schmidt and Spitzer showed
that these two sets are equal and can be characterized in terms of
the algebraic equation
\begin{equation} \label{algebraic equation}
  a(z)-\lambda=\sum_{k=-q}^p a_k z^k -\lambda =0.
\end{equation}
For every $\lambda \in \mathbb C$ there are $p+q$ solutions for
(\ref{algebraic equation}), which we denote by $z_j(\lambda)$, for
$j=1,\ldots,p+q$. We order these solutions by absolute value, so
that
\begin{equation} \label{ordering in magnitude}
  0<|z_1(\lambda)|\leq |z_2(\lambda)|\leq \cdots \leq
  |z_{p+q}(\lambda)|.
\end{equation}
When all inequalities in (\ref{ordering in magnitude}) are strict
then the values $z_k(\lambda)$ are unambiguously defined. If
equalities occur then we choose an arbitrary numbering so that
(\ref{ordering in magnitude}) holds. The result by Schmidt and
Spitzer \cite{Schmidt-Spitzer}, \cite[Theorem
11.17]{Bottcher-Grudsky}, is that
\begin{equation}
  \liminf_{n\to \infty}\spec T_n(a) =
  \limsup_{n\to \infty}\spec T_n(a)=\Gamma_0
\end{equation}
where
\begin{equation} \label{defGamma0}
    \Gamma_0 := \{ \lambda \in \mathbb C \mid
        |z_q(\lambda)| = |z_{q+1}(\lambda)| \}.
\end{equation}
This result gives a description of the asymptotic location of the
eigenvalues. The eigenvalues accumulate on the set $\Gamma_0$, which
is known to be a disjoint union of a finite number of (open)
analytic arcs and a finite number of exceptional points
\cite[Theorem 11.9]{Bottcher-Grudsky}. It is also known that
$\Gamma_0$ is connected \cite{Ullman}, \cite[Theorem
11.19]{Bottcher-Grudsky}, and that $\mathbb C \setminus \Gamma_0$
need not be connected \cite[Theorem 11.20]{Bottcher-Grudsky},
\cite[Proposition 5.2]{BGpaper}. See \cite{Bottcher-Grudsky} for
many beautiful illustrations of eigenvalues of banded Toeplitz
matrices.

The limiting eigenvalue distribution was determined
by Hirschman \cite{Hirschman}, \cite[Theorem 11.16]{Bottcher-Grudsky}.
He showed that
there exists a Borel probability measure $\mu_0$ on $\Gamma_0$ such that the
normalized eigenvalue counting measure of $T_n(a)$ converges weakly to
$\mu_0$, as $n \to \infty$. That is,
\begin{equation}
  \frac{1}{n} \sum_{\lambda\in \spec T_n(a)} \delta_\lambda \to \mu_0,
\end{equation}
where in the sum each eigenvalue is counted according to its
multiplicity. The measure $\mu_0$ is absolutely continuous with
respect to the arclength measure on $\Gamma_0$ and has an analytic
density on each open analytic arc in $\Gamma_0$, which can be
explicitly represented in terms of the solutions of the algebraic
equation (\ref{algebraic equation}) as follows. Equip every open
analytic arc in $\Gamma_0$ with an orientation. The orientation
induces $\pm$-sides on each arc, where the $+$-side is on the left
when traversing the arc according to its orientation, and the
$-$-side is on the left. The limiting measure $\mu_0$ is then given
by
\begin{equation} \label{maat q}
    {\rm d}\mu_0(\lambda)=\frac{1}{2\pi {\rm i}}\sum_{j=1}^{q}
    \left(\frac{{z_j'}_+(\lambda)}{{z_j}_+(\lambda)} - \frac{{z_j'}_-(\lambda)}{{z_j}_-(\lambda)}
    \right) {\rm d} \lambda.
\end{equation}
where ${\rm d} \lambda$ is the complex line element on $\Gamma_0$
(taken according to the orientation), and where
${z_j}_\pm(\lambda)$, $\lambda \in \Gamma_0$, is the limiting value
of $z_j(\lambda')$ as $\lambda' \to \lambda$ from the $\pm$ side of
the arc.  These limiting values exist for every $\lambda \in
\Gamma_0$, with the possible exception  of the finite number of
exceptional points.

Note that the right-hand side of (\ref{maat q}) is a priori a
complex measure and it is not immediately clear that it is in fact a
probability measure. In the original paper \cite{Hirschman} and in
the book \cite[Theorem 11.16]{Bottcher-Grudsky}, the authors give a
different expression for the limiting density, from which it is
clear that the measure is non-negative.  We prefer to work with the
complex expression (\ref{maat q}), since it allows for a direct
generalization which we will need in this paper.

Note also that if we reverse the orientation on an arc in
$\Gamma_0$, then the $\pm$-sides are reversed. Since the complex
line element ${\rm d} \lambda$ changes sign as well, the expression
(\ref{maat q}) does not depend on the choice of orientation.

The following is a very simple example, which however serves as a motivation
for the results in the paper.

\begin{example} Consider the symbol $a(z) = z + 1/z$.
In this case we find that $\Gamma_0 = [-2,2]$
and $\mu_0$ is absolutely continuous with respect
to the Lebesgue measure and has density
\begin{equation}
  \frac{{\rm d} \mu_0(\lambda)}{{\rm d}\lambda}=\frac{1}{\pi \sqrt{4-\lambda^2}}, \qquad
  \lambda \in (-2,2).
\end{equation}
This measure is well-known in potential theory and is called the
arcsine measure or the equilibrium measure of $\Gamma_0$, see e.g.\ \cite{Saff-Totik}.
It  has
the property that it minimizes the energy functional $I$ defined by
\begin{equation}\label{energy in voorbeeld}
    I(\mu)= \iint \log \frac{1}{|x-y|} \ {\rm d} \mu(x) \ {\rm d}\mu(y),
\end{equation}
among all Borel probability measures $\mu$ on $[-2,2]$.
The measure $\mu_0$ is also characterized by the equilibrium condition
\begin{equation} \label{variatonal-condition}
    \int \log|x-\lambda| \ {\rm d}\mu_0(\lambda) = 0, \qquad x \in [-2,2],
\end{equation}
which is the Euler-Lagrange variational condition for the minimization
problem.
\end{example}

The fact that $\mu_0$ is the equilibrium measure of $\Gamma_0$ is
special for symbols $a$ with $p=q=1$. In that case one may think
of the eigenvalues of $T_n(a)$  as charged particles
on $\Gamma_0$, each eigenvalue having a total charge $1/n$, that repel each
other with logarithmic interaction. The particles seek to minimize
the energy functional \eqref{energy in voorbeeld}. As $n \to \infty$,
they distribute themselves according to $\mu_0$ and $\mu_0$ is
the  minimizer of \eqref{energy in voorbeeld} among
all probability measures supported on $\Gamma_0$.

The aim of this paper is to characterize $\mu_0$ for general symbols
$a$ of the form \eqref{symbola} also in terms of an equilibrium
problem from potential theory. The corresponding equilibrium problem
is more complicated since it involves not only the measure $\mu_0$,
but a sequence of $p+q-1$ measures
\[ \mu_{-q+1}, \ \mu_{-q+2}, \ \ldots, \ \mu_{-1}, \  \mu_0, \
    \mu_1, \  \ldots, \  \mu_{p-2},  \ \mu_{p-1} \]
that jointly minimize an energy functional.

\section{Statement of results} \label{section2}

\subsection{The energy functional}

To state our results we need to introduce some notions from
potential theory. Main references for potential theory in the
complex plane are \cite{Ransford} and \cite{Saff-Totik}.

We will mainly work with finite positive measures on $\mathbb C$,
but we will also use $\nu_1 - \nu_2$ where $\nu_1$ and $\nu_2$
are positive measures.
The measures need not have bounded support. If $\nu$ has unbounded
support then we assume that
\begin{equation}\label{cond: unbounded support}
    \int \log(1+|x|) \ {\rm d} \nu(x)<\infty.
\end{equation}
In that case the logarithmic energy of $\nu$ is defined as
\begin{equation}
    I(\nu)=\int \log \frac{1}{|x-y|} \ {\rm d} \nu(x) {\rm d} \nu (y)
\end{equation}
and $I(\nu) \in (-\infty, +\infty]$.

\begin{definition}
We define $\mathcal M_e$ as the collection of positive measures
$\nu$ on $\mathbb C$ satisfying \eqref{cond: unbounded support} and
having finite energy, i.e., $I(\nu) < +\infty$. For $c > 0$ we
define
\begin{equation}
     \mathcal M_e(c) = \{ \nu \in \mathcal M_e \mid  \nu(\mathbb C) = c \}.
\end{equation}
\end{definition}

The mutual energy $I(\nu_1,\nu_2)$ of two measures $\nu_1$ and $\nu_2$ is
\begin{equation}
    I(\nu_1,\nu_2) = \int \log \frac{1}{|x-y|} \ {\rm d} \nu_1(x) {\rm d} \nu_2 (y).
\end{equation}
It is well-defined and finite if $\nu_1, \nu_2 \in \mathcal M_e$
and in that case we have
\begin{equation}
    I(\nu_1 - \nu_2) = I(\nu_1) + I(\nu_2) - 2 I(\nu_1,\nu_2).
\end{equation}

If $\nu_1, \nu_2 \in \mathcal M_e(c)$ for some $c >0$, then
\begin{equation}\label{eq:pos-energy}
I(\nu_1-\nu_2)\geq 0,
\end{equation}
with equality if and only if $\nu_1= \nu_2$. This is a well-known
result if $\nu_1$ and $\nu_2$ have compact support \cite{Saff-Totik}.
For measures in $\mathcal M_e(c)$ with unbounded support, this is a
recent result of Simeonov \cite{Simeonov}, who obtained this from
a very elegant integral representation for
$I(\nu_1- \nu_2)$. It is a consequence of \eqref{eq:pos-energy}
that $I$ is strictly convex on $\mathcal M_e(c)$, since
\[ \begin{aligned}
    I\left( \frac{\nu_1 + \nu_2}{2} \right)
    & = \frac{1}{2} \left( I(\nu_1) + I(\nu_2) \right)
        - I\left(\frac{\nu_1-\nu_2}{2}\right) \\
        &
        \leq \frac{1}{2} \left( I(\nu_1) + I(\nu_2) \right),
        & \textrm{ for } \nu_1,\nu_2 \in \mathcal M_e(c),
        \end{aligned} \]
with equality if and only if $\nu_1 = \nu_2$.

Before we can state the equilibrium problem we also need to
introduce the sets
\begin{equation} \label{defGammak}
  \Gamma_k :=\{\lambda\in \mathbb C \mid
  |z_{q+k}(\lambda)|=|z_{q+k+1}(\lambda)|\}, \qquad  k=-q+1,\ldots,p-1,
\end{equation}
which for $k=0$ reduces to the definition \eqref{defGamma0} of $\Gamma_0$.
We will show that each $\Gamma_k$ is the disjoint union of a finite number of open
analytic arcs and a finite number of exceptional points.
All $\Gamma_k$ are unbounded, except for $\Gamma_0$ which is compact.

The equilibrium problem will be defined for a vector of measures
denoted by $\vec \nu=(\nu_{-q+1},\ldots,\nu_{p-1})$. The component
$\nu_k$ is a measure  on $\Gamma_k$  satisfying some additional
properties that are given in the following definition.

\begin{definition}
We  call a vector of measures $\vec{\nu} = (\nu_{-q+1}, \ldots,
\nu_{p-1})$ admissible if $\nu_k \in \mathcal M_e$, $\nu_k$ is
supported on $\Gamma_k$, and  \begin{equation} \label{norm-nuk}
        \nu_k(\Gamma_k) =
        \begin{cases}
            \frac{q+k}{q} & \text{ if } k \leq 0, \\[5pt]
            \frac{p-k}{p} & \text{ if } k \geq 0,
        \end{cases} \end{equation}
for every $k=-q+1, \ldots, p-1$.
\end{definition}

Now we are ready to state our first result. The proof is given in section \ref{section4}.

\begin{theorem} \label{theorem1} Let the symbol $a$ satisfy \eqref{symbola} and \eqref{assumption},
    and let the curves $\Gamma_k$ be defined as in \eqref{defGammak}.
For each $k\in \{-q+1,\ldots,p-1\}$, define the measure $\mu_k$ on
$\Gamma_k$ by
\begin{equation} \label{maat k}
    {\rm d}\mu_k(\lambda)=\frac{1}{2\pi {\rm i}}\sum_{j=1}^{q+k}
    \left(\frac{{z_j'}_+(\lambda)}{{z_j}_+(\lambda)} - \frac{{z_j'}_-(\lambda)}{{z_j}_-(\lambda)}
    \right) \ {\rm d} \lambda,
  \end{equation}
where ${\rm d}\lambda$ is the complex line element on each
analytic arc of $\Gamma_k$ according to a chosen orientation of
$\Gamma_k$ (cf.\ discussion after \eqref{maat q}). Then
\begin{enumerate}
  \item[\rm (a)] $\vec{\mu}=(\mu_{-q+1},\ldots,\mu_{p-1})$ is admissible.
  \item[\rm (b)] There exist constants $l_k$ such that
 \begin{align} \label{EulerL-1}
    2 \int \log |\lambda-x| \ {\rm d} \mu_k(x)
    =\int \log |\lambda-x| \ {\rm d} \mu_{k+1}(x)+
    \int \log |\lambda-x| \ {\rm d} \mu_{k-1}(x)+l_k, \end{align}
    for $k=-q+1,\ldots,p-1$, and $\lambda \in \Gamma_k$. Here we let
$\mu_{-q}$ and $\mu_{p}$ be the
    zero measures.
    \item[\rm (c)] $\vec{\mu}=(\mu_{-q+1},\ldots,\mu_{p-1})$ is the unique
    minimizer of  the energy functional $J$ defined by
    \begin{equation} \label{energyJ}
        J(\vec{\nu}) = \sum_{k=-q+1}^{p-1} I(\nu_k) -
        \sum_{k=-q+1}^{p-2} I(\nu_k, \nu_{k+1})
    \end{equation} for  admissible vectors of measures $\vec{\nu}=(\nu_{-q+1},\ldots,\nu_{p-1})$.
\end{enumerate}
\end{theorem}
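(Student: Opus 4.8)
The plan is to establish the three statements more or less in the order given, using the explicit formula \eqref{maat k} as the starting point and the results of Schmidt--Spitzer and Hirschman (suitably generalized from $\Gamma_0$ to the higher curves $\Gamma_k$) as the analytic backbone. For part (a), I would first argue that each $\Gamma_k$ is, like $\Gamma_0$, a finite union of analytic arcs plus finitely many exceptional points; on each arc the functions $z_1,\dots,z_{q+k}$ extend analytically from the two sides, so the density in \eqref{maat k} is a well-defined analytic function there and $\mu_k$ is a (complex a priori) measure with an analytic density. The crux of (a) is twofold: first, that $\mu_k$ is in fact a nonnegative measure, and second, that its total mass is $(q+k)/q$ for $k\le 0$ and $(p-k)/p$ for $k\ge 0$. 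For nonnegativity I would either mimic Hirschman's alternative representation of the density (the one alluded to after \eqref{maat q} that manifestly gives a positive measure) adapted to the partial sum $\sum_{j=1}^{q+k}$, or deduce it from an interpretation of $\mu_k$ as a limiting counting measure of generalized eigenvalues (as the abstract promises). For the total mass, the natural tool is a contour/residue computation: $\frac{1}{2\pi i}\left(\frac{z_j'}{z_j}\right)_+ - \frac{1}{2\pi i}\left(\frac{z_j'}{z_j}\right)_-$ integrated over $\Gamma_k$ is a difference of boundary values of $\frac{1}{2\pi i} d\log z_j(\lambda)$, and summing over $j=1,\dots,q+k$ and deforming contours in the complement of $\Gamma_k$ reduces the integral to a winding number of $z_j(\lambda)$ around $0$ along a large circle; using the asymptotics of the roots of \eqref{algebraic equation} as $\lambda\to\infty$ (the $q$ small roots behave like $\lambda^{-1/q}$ up to roots of unity, the $p$ large roots like $\lambda^{1/p}$) one reads off exactly the stated normalization. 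The finiteness of the logarithmic energy $I(\mu_k)<\infty$, i.e.\ $\mu_k\in\mathcal M_e$ including the growth condition \eqref{cond: unbounded support} for the unbounded $\Gamma_k$, follows from the boundedness of the analytic density together with the algebraic rate at which $\Gamma_k$ escapes to infinity.

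For part (b), the Euler--Lagrange identities \eqref{EulerL-1}, I would work with the logarithmic potentials $U^{\mu_k}(\lambda)=\int\log\frac{1}{|\lambda-x|}\,d\mu_k(x)$ and their complex analytic counterparts. Define, for $\lambda$ off all the curves, the function $g_k(\lambda)=\sum_{j=1}^{q+k}\log z_j(\lambda)$ (with a consistent branch); each $g_k$ is analytic in $\mathbb C\setminus\Gamma_k$, and \eqref{maat k} says precisely that $\mu_k$ is (a constant times) the jump of $\frac{1}{2\pi i}g_k'$ across $\Gamma_k$, so that $\int\log(\lambda-x)\,d\mu_k(x)$ reconstructs $g_k$ up to an additive analytic term that is controlled by the behaviour at infinity. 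The combination $2U^{\mu_k}-U^{\mu_{k+1}}-U^{\mu_{k-1}}$ then corresponds to $2g_k-g_{k+1}-g_{k-1}$, which telescopes: because $g_k$ and $g_{k-1}$ share the roots $z_1,\dots,z_{q+k-1}$ and differ only by the single term $\log z_{q+k}$, the second difference $2g_k-g_{k+1}-g_{k-1}$ equals $\log z_{q+k}-\log z_{q+k+1}$, which is exactly $\pm$ the quantity that vanishes (in modulus) on $\Gamma_k$ by the very definition \eqref{defGammak}. Turning this complex-analytic cancellation into the real potential-theoretic statement \eqref{EulerL-1}, with the constants $l_k$ absorbing the ambiguity of branches and of the conjugate-function (the real part is what survives), is the technical content here; one must also check that $\mu_{-q}=\mu_p=0$ makes the endpoint cases consistent, which is immediate since there the sum $\sum_{j=1}^{0}$ or $\sum_{j=1}^{p+q}$ gives either nothing or a full product whose modulus is $|a_{-q}/a_p|^{\text{const}}\cdot|\text{const}|$ — a constant.

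Part (c) is then essentially formal once (a) and (b) are in hand. The energy functional $J$ in \eqref{energyJ} is, up to a constant depending only on the prescribed masses, a positive quadratic form in the differences of measures: writing $J(\vec\nu)-J(\vec\mu)$ and expanding using bilinearity of $I(\cdot,\cdot)$, the linear terms vanish by the Euler--Lagrange conditions \eqref{EulerL-1} (this is exactly why one needs the constants $l_k$ and the mass constraints \eqref{norm-nuk}: the constants integrate against $\nu_k-\mu_k$ to zero since both have the same total mass), leaving a remainder that is a sum of terms $I(\nu_k-\mu_k)$ minus cross terms $I(\nu_k-\mu_k,\nu_{k+1}-\mu_{k+1})$. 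One shows this remainder is $\ge 0$ by recognizing the quadratic form as positive semidefinite — the natural way is to write the interaction matrix (with $2$ on the diagonal and $-1$ on the first off-diagonals) as a Gram-type matrix, equivalently to note it is the discrete Laplacian/Cartan-type matrix of type $A_{p+q-1}$, which is positive definite; combined with \eqref{eq:pos-energy} (the Simeonov inequality $I(\sigma)\ge0$ for signed measures of total mass zero in $\mathcal M_e$) this forces $J(\vec\nu)\ge J(\vec\mu)$, with equality iff $\nu_k=\mu_k$ for all $k$, giving uniqueness. I expect the main obstacle to be part (a) — specifically, making rigorous the transition from the explicit jump formula to both the nonnegativity and the exact normalization of $\mu_k$, since this requires careful global control of the multivalued algebraic functions $z_j(\lambda)$ across all the exceptional points and at infinity; parts (b) and (c), while not trivial, are guided very tightly by the algebraic structure and by the convexity already recorded in the excerpt.
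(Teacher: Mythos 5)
Your plan is broadly aligned with the paper's proof, and parts (b) and (c) are essentially the paper's own route. For (b) you introduce $g_k(\lambda)=\sum_{j=1}^{q+k}\log z_j(\lambda)$; the paper works with $w_k=\prod_{j=1}^{q+k}z_j$ and $\log|w_k|$, which is the same thing, and the telescoping identity $2\log|w_k|-\log|w_{k+1}|-\log|w_{k-1}|=\log|z_{q+k}/z_{q+k+1}|$ (vanishing on $\Gamma_k$) is exactly how the paper obtains \eqref{EulerL-1}. For (c) your observation that the quadratic remainder is governed by the $A_{p+q-1}$ Cartan matrix, which is positive definite, is correct; the paper realizes this positivity constructively by the explicit telescoping decomposition \eqref{energyJ-alt} (written for differences in the proof of Lemma \ref{lem: nulenergie voor tekenvectormaten}), and also uses the mass constraints \eqref{norm-nuk} together with Simeonov's result \eqref{eq:pos-energy} exactly as you indicate to conclude $J(\vec\nu)-J(\vec\mu)=J(\vec\nu-\vec\mu)\ge 0$. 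The two formulations of (c) are mathematically equivalent.

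The one place where you genuinely diverge from the paper, and where your proposal is underspecified, is the nonnegativity of $\mu_k$ in part (a). The paper proves it directly: on an arc $A\subset\Gamma_k$ one has $|w_{k+}|=|w_{k-}|$ so $\mathrm{Re}\,\log(w_{k+}/w_{k-})=0$ on $A$, and since $w_k$ has an analytic continuation $\hat w_k$ from $U_-$ across $A$ with $|w_k|<|\hat w_k|$ in $U_+$ (a consequence of Proposition \ref{analytic-continuation}), the Cauchy--Riemann equations give that $\mathrm{Im}\,\log(w_{k+}/w_{k-})$ is nondecreasing along $A$; this is precisely the density in \eqref{maat k} and it is therefore nonnegative. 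Your first alternative (adapting Hirschman's manifestly positive representation to the partial sum $\sum_{j=1}^{q+k}$) is plausible but not carried out, and it is not obvious that Hirschman's rewriting survives the generalization without additional argument. Your second alternative (deducing positivity from $\mu_{k,n}\to\mu_k$ with $\mu_{k,n}\ge 0$) is not circular in this paper's logical structure — Theorem \ref{theorem3} is proved from Proposition \ref{prop9} and the Cauchy-transform identity \eqref{Cauchytransform}, neither of which uses positivity of $\mu_k$ — but it inverts the order of exposition and would require you to establish Proposition \ref{prop8} and Proposition \ref{prop9} before part (a) is complete. Either route could be made to work, but the paper's maximum-principle argument is more self-contained and you should be aware that it is the intended mechanism. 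Your normalization/total-mass computation (contour deformation to a large circle and residue at infinity using the asymptotics \eqref{asympzk}) matches the paper's Proposition \ref{prop7}, and your finiteness-of-energy remark is in the spirit of the paper's use of \eqref{alphak-prop} and \eqref{Oatinfinity}.
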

The relations \eqref{EulerL-1} are the Euler-Lagrange variational
conditions for the minimization problem for $J$ among admissible
vectors of measures.

It may not be obvious that the energy functional \eqref{energyJ} is bounded
from below. This can be seen from the alternative representation
\begin{align}
  J(\vec \nu)=&\left(\frac{1}{q}+\frac{1}{p}\right)I(\nu_0) +
    \sum_{k=1}^{q-1} k(k+1) \  I\left(\frac{\nu_{-q+k}}{k} - \frac{\nu_{-q+k+1}}{k+1}\right)\nonumber \\
    &+\sum_{k=1}^{p-1} k(k+1) \ I\left(\frac{\nu_{p-k}}{k}- \frac{\nu_{p-k-1}}{k+1} \right).
        \label{energyJ-alt}
 \end{align}
We leave the calculation leading to this identity to the reader.
Under the normalizations \eqref{norm-nuk} it follows by
\eqref{eq:pos-energy} that each term in the two finite sums on the right-hand side of
\eqref{energyJ-alt} is non-negative, so that
\begin{align} \nonumber
    J(\vec{\nu}) & \geq  \left(\frac{1}{q}+\frac{1}{p}\right) I(\nu_0).
\end{align}
Since $\nu_0$ is a Borel probability measure on $\Gamma_0$ and
$\Gamma_0$ is compact, we indeed have that the energy functional is
bounded from below on admissible vectors of measures $\vec \nu$.

The alternative representation \eqref{energyJ-alt} will  play a role
in the proof of Theorem \ref{theorem1}.

Yet another representation for $J$ is
\begin{equation} \label{energyJ-alt2}
    J(\vec\nu) =  \sum_{j,k=-q+1}^{p-1} A_{jk} \ I(\nu_j, \nu_k) \end{equation}
where the interaction matrix $A$ has entries
\begin{equation} \label{interaction} A_{jk} = \begin{cases}
    1, & \text{ if } j = k, \\
    -\frac{1}{2}, & \text{ if } |j-k| = 1, \\
    0, & \text{ if } |j-k| \geq 2.
    \end{cases} \end{equation}
The energy functional in the form  \eqref{energyJ-alt2} and
\eqref{interaction} also appears in the theory of simultaneous
rational approximation, where it is the interaction matrix for a
Nikishin system \cite[Chapter 5]{Nikishin-Sorokin}.

It allows for the following physical interpretation: on
each of the curves $\Gamma_k$ one puts charged particles with total
charge $(q+k)/q$ or $(p-k)/p$, depending on whether $k \leq 0$
or $k \geq 0$. Particles that lie on the same curve
repel each other. The particles on two consecutive curves interact
in the sense that they attract each other but in a way that is half as
strong as the repulsion on a single curve. Particles on different curves
that are not consecutive do not interact with each other in a direct way.

\subsection{The measures $\mu_k$ as limiting measures of generalized eigenvalues}

By \eqref{maat q} and Theorem \ref{theorem1} we know that the
measure $\mu_0$ that appears in the minimizer of the energy
functional $J$ is the limiting measure for the eigenvalues of
$T_n(a)$. It is natural to ask about the other measures $\mu_k$ that
appear in the minimizer. In our second result we show that the
measures $\mu_k$ can be obtained as limiting counting measures for
certain generalized eigenvalues.

Let $k \in \{-q+1, \ldots, p-1\}$. We use $T_n(z^{-k}(a-\lambda)$
to denote the Toeplitz matrix with the symbol $z \mapsto z^{-k}(a(z) - \lambda)$.
For example, for $k=1$, $q=1$ and $p=2$, we have
\[ T_n(z^{-k}(a-\lambda)) = \begin{pmatrix}
    a_1 & a_0 - \lambda & a_{-1} & \\
    a_2 & a_1 & a_0 - \lambda & a_{-1} \\
        & a_2 & a_1  & a_0-\lambda & a_{-1} \\
        &  & \ddots & \ddots & \ddots & \ddots \\
        & & & a_2 & a_1 & a_0-\lambda & a_{-1} \\
        &  &  & & a_2 & a_1 & a_0 - \lambda  \\
        &  &   &  &  & a_2 & a_1
          \end{pmatrix}_{n \times n}. \]

\begin{definition}
For $k \in \{-q+1, \ldots, p-1\}$ and $n \geq 1$, we define
the polynomial $P_{k,n}$ by
\begin{equation} \label{defPkn}
    P_{k,n}(\lambda)=\det T_n(z^{-k}(a-\lambda))
\end{equation}
and we define the $k$th generalized spectrum of $T_n(a)$ by
\begin{equation}
    \spec_k T_n(a) = \{ \lambda \in \mathbb C \mid P_{k,n}(\lambda) = 0
    \}.
    \end{equation}
Finally, we define $\mu_{k,n}$ as the normalized zero counting measure
of $\spec_k T_n(a)$
\begin{equation} \label{defmukn}
    \mu_{k,n} = \frac{1}{n} \sum_{\lambda \in \spec_k T_n(a)}  \delta_{\lambda}
    \end{equation}
where in the sum each $\lambda$  is counted according to its multiplicity
as a zero of $P_{k,n}$.
\end{definition}

Note that $\lambda \in \spec_k T_n(a)$ is a generalized eigenvalue
(in the usual sense) for the matrix pencil $(T_n(z^{-k}a),
T_n(z^{-k}))$, that is, $\det(A - \lambda B) = 0$ with $A =
T_n(z^{-k}a)$ and $B = T_n(z^{-k})$. If $k= 0 $, then $B=I$ and
$\spec_0 T_n(a)=\spec T_n(a)$. If $k\neq 0$, then $B$ is not
invertible and the generalized eigenvalue problem is singular,
causing that there are less than $n$ generalized eigenvalues. In
fact, since $T_n(z^{-k}(a-\lambda))$ has exactly $n-|k|$ entries
$a_0-\lambda$, we easily get that the degree of $P_{k,n}$ is at most
$n - |k|$ and so there are at most $n -|k|$ generalized eigenvalues.
Due to the band structure of $T_n(z^{-k}(a-\lambda))$ the actual
number of generalized eigenvalues is substantially smaller.

\begin{proposition} \label{eigenschappen Pnk}
  Let $k\in \{-q+1,\ldots,p-1\}$. Let
  $ P_{k,n}(\lambda) = \gamma_{k,n} \lambda^{d_{k,n}} + \cdots$
  have degree $d_{k,n}$ and leading coefficient $\gamma_{k,n} \neq 0$.
  Then
  \begin{equation} \label{degree Pnk}
    d_{k,n}\leq \begin{cases}
       \frac{q+k}{q} n, & \quad \textrm{ if } k<0,\\[5pt]
       \frac{p-k}{p} n, & \quad \textrm{ if } k>0.
    \end{cases}
  \end{equation}
Equality holds in \eqref{degree Pnk} if either $k > 0$ and $n$ is a multiple of $p$,
or $k < 0$ and $n$ is a multiple of $q$, and in those cases we have
\begin{align} \label{leadingcoefficient}
    \gamma_{k,n}=
      \begin{cases}
    (-1)^{(k+1)n} a_{-q}^{|k|n/q}, & \textrm{ if } k<0 \textrm{ and } n\equiv0 \bmod  q,\\[5pt]
    (-1)^{(k+1)n} a_p^{kn/p}, & \textrm{ if } k>0 \textrm{ and } n\equiv 0
    \bmod  p.
    \end{cases}
  \end{align}

\end{proposition}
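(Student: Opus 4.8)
The plan is to analyze the structure of the banded matrix $T_n(z^{-k}(a-\lambda))$ directly, using its banded form together with the Laplace expansion of the determinant. Fix $k$, say $k>0$ (the case $k<0$ being entirely analogous, replacing $p$ by $q$ and $a_p$ by $a_{-q}$, and tracking the shift by $-k$ versus $+|k|$ in the band). The symbol $z^{-k}(a(z)-\lambda) = \sum_{j=-q-k}^{p-k} a_{j+k} z^j - \lambda z^{-k}$ has its $z^0$-coefficient equal to $a_k$ when $k \neq 0$ and equal to $a_0-\lambda$ only in the $z^{-k}$ position; more precisely $T_n(z^{-k}(a-\lambda))$ has lower bandwidth $q+k$ and upper bandwidth $p-k$, and the variable $\lambda$ appears exactly on the $k$-th subdiagonal (below the main diagonal), in $n-k$ of its entries, each as $-\lambda$ times $a_p$-independent data. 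So $P_{k,n}(\lambda)$ is a polynomial in $\lambda$ of degree at most $n-k$, but the band structure forces a much smaller degree. The first step is to extract this degree bound.

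To get the sharp bound \eqref{degree Pnk}, I would expand $\det T_n(z^{-k}(a-\lambda))$ along the terms contributing the highest powers of $\lambda$. A permutation $\sigma \in S_n$ contributes a nonzero term only if $(\sigma(i),i)$ lies in the band for every $i$, i.e. $-(p-k) \le i - \sigma(i) \le q+k$. The power of $\lambda$ in such a term equals the number of indices $i$ with $\sigma(i)-i = k$ (the position of the $-\lambda$ entry). The key combinatorial fact is that in any such $\sigma$, a \emph{cycle structure} argument (or: viewing $\sigma$ as a lattice path / using that $\sum_i (\sigma(i)-i)=0$ while each term is at most $q+k$ and at least $-(p-k)$) bounds the number of $i$ with $\sigma(i)-i=k$ by roughly $\frac{p-k}{p}n$. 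Concretely, partition $\{1,\dots,n\}$ into blocks according to the cycles of $\sigma$: within each cycle the displacements sum to zero, and a displacement of $+k$ must be compensated, which — given the asymmetric band limits — costs at least a factor forcing the fraction of $+k$-steps to be at most $\frac{p-k}{p}$. This yields $d_{k,n} \le \frac{p-k}{p}n$. I expect this counting argument — getting the constant $\frac{p-k}{p}$ exactly right from the band asymmetry — to be the main obstacle; it is cleanest to phrase it by grouping the matrix into $\lceil n/p\rceil$ blocks of $p$ consecutive rows/columns and checking that each block contributes at most $p-k$ factors of $\lambda$.

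For the case of equality and the leading coefficient \eqref{leadingcoefficient}, suppose $n = mp$. The plan is to exhibit the unique permutation $\sigma^*$ achieving $d_{k,n} = (p-k)m$ with nonzero coefficient, and to show all other permutations achieving that many $\lambda$-factors cancel or are forbidden by the band. When $n$ is a multiple of $p$, the extremal $\sigma^*$ is forced to be the one that, in each block of $p$ columns, uses the $\lambda$-entry (the $k$-subdiagonal entry, with matrix value $-\lambda a_{\cdot}$ — in fact with value $-\lambda$ since the $z^{-k}$ coefficient contributes $1\cdot(-\lambda)$ only through $a_0$; I need to track that the relevant band entry is $-\lambda$) in $p-k$ positions and the extreme entry $a_p$ (top of the band, displacement $-(p-k)$) in the remaining $k$ positions per block, wrapping around so that displacements cancel block-by-block. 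Computing the sign of $\sigma^*$ as a product of $m$ identical $p$-cycles (each of sign $(-1)^{p-1}$) times the $(-1)^{d_{k,n}}$ from the $-\lambda$'s, and the monomial in the matrix entries as $a_p^{km/?}$ — here one counts that $\sigma^*$ uses the entry $a_p$ exactly $km$ times over the whole matrix, giving $a_p^{km} = a_p^{kn/p}$ — then collecting signs yields $\gamma_{k,n} = (-1)^{(k+1)n} a_p^{kn/p}$. The sign bookkeeping is routine but delicate; I would double-check it on the displayed $3\times 3$-block example with $p=2,q=1,k=1$. The lower-bandwidth case $k<0$ is handled by the transpose-type symmetry $z^{-k}(a-\lambda) \mapsto z^{k}(\tilde a - \lambda)$ with $\tilde a(z) = a(1/z)$, which swaps $p \leftrightarrow q$ and $a_p \leftrightarrow a_{-q}$, reducing it to the case already done.
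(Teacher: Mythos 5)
Your framework---expand $\det T_n(z^{-k}(a-\lambda))$ over permutations, count the positions $j$ with $\pi(j)=j+k$, and exhibit the unique extremal permutation when $p \mid n$---is the same as the paper's. However there are two concrete gaps.

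First, the degree bound. You mention the identity $\sum_j(\pi(j)-j)=0$, which is indeed the right starting point, but you do not actually extract the inequality from it; instead you pivot to a ``cleaner'' block argument, asserting that each block of $p$ consecutive indices contains at most $p-k$ members of $N_\pi=\{j:\pi(j)=j+k\}$. This per-block bound is not a triviality: since $\pi$ can move indices across block boundaries, you would have to justify it, and doing so is not obviously easier than the global bound. The paper's argument is short and genuinely different in flavor: rewrite $\sum_j(\pi(j)-j)=0$ as $\sum_j(\pi(j)-j)_+=\sum_j(j-\pi(j))_+$; the left-hand side is at least $k\,|N_\pi|$, while on the right each term is at most $p-k$ (the band gives $\pi(j)-j\geq k-p$) and at most $n-|N_\pi|$ terms are nonzero, yielding $k|N_\pi|\leq (p-k)(n-|N_\pi|)$ directly. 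You should carry out this step rather than defer to the unproved block claim. (Also, your displayed band inequality $-(p-k)\le i-\sigma(i)\le q+k$ has the sign of $i-\sigma(i)$ reversed relative to the convention $(T_n(b))_{jk}=b_{j-k}$; the correct constraint is $k-p\le \pi(j)-j\le q+k$, which is what you actually use two sentences later.)

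Second, the sign. You propose computing $\operatorname{sgn}(\sigma^*)$ as ``a product of $m$ identical $p$-cycles (each of sign $(-1)^{p-1}$).'' This is false whenever $\gcd(p,k)>1$: the restriction of the extremal permutation to a block of $p$ consecutive indices is the cyclic shift by $k$, which decomposes into $\gcd(p,k)$ cycles of length $p/\gcd(p,k)$, so its sign is $(-1)^{p-\gcd(p,k)}$, not $(-1)^{p-1}$. For instance with $p=4$, $k=2$, $n=4$ the extremal permutation is $(1\,3)(2\,4)$, which has sign $+1$, whereas your formula would predict $-1$. The final answer $(-1)^{(k+1)n}a_p^{kn/p}$ is still correct, but recovering it requires combining $(-1)^{m(p-\gcd(p,k))}$ from the permutation sign with $(-1)^{(p-k)m}$ from the $(-\lambda)$ factors and checking parity in all four cases of the parities of $p$ and $k$; your stated route does not do this. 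Finally, a small point: the paper shows the extremal permutation is \emph{unique}, so there is no cancellation to rule out; your hedge ``cancel or are forbidden'' should be replaced by the uniqueness argument (once \eqref{permutation highest degree} is forced on $\{1,\dots,p\}$, it is forced block by block by induction).
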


We now come to our second main result. It is the analogue of the
results of Schmidt-Spitzer and Hirschman for the generalized
eigenvalues.
\begin{theorem} \label{theorem3}
Let $k\in \{-q+1,\ldots,p-1\}$. Then
\begin{equation} \label{p en q limiet van eigenwaarde}
  \liminf_{n\to \infty} \spec_k T_n(a) =
  \limsup_{n\to \infty} \spec_k T_n(a) = \Gamma_k,
\end{equation}
and
  \begin{equation} \label{eq: th: convergence of measures}
  \lim_{n\to \infty} \int_{\mathbb C} \phi(z) \ {\rm d}
  \mu_{k,n}(z)=\int_{\mathbb C} \phi(z) \ {\rm d}\mu_k(z)
  \end{equation}
    holds  for every bounded continuous  function $\phi$ on $\mathbb C$.
\end{theorem}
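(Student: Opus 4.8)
The plan is to establish \eqref{p en q limiet van eigenwaarde} and \eqref{eq: th: convergence of measures} simultaneously via the potential-theoretic machinery that works for the $k=0$ case, now applied to the polynomials $P_{k,n}$. The key object is the sequence of normalized zero counting measures $\mu_{k,n}$, which live on $\spec_k T_n(a)$. First I would observe that, by Proposition \ref{eigenschappen Pnk}, $\deg P_{k,n} \leq \frac{q+k}{q} n$ (if $k<0$) or $\leq \frac{p-k}{p}n$ (if $k>0$), with equality and an explicit leading coefficient along the subsequence where $n$ is divisible by $q$ (resp.\ $p$); in particular the total mass $\mu_{k,n}(\mathbb C) = d_{k,n}/n$ converges to $\frac{q+k}{q}$ or $\frac{p-k}{p}$, matching the admissibility normalization \eqref{norm-nuk} for $\mu_k$. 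The first real task is a uniform bound on the zeros: I would show $\spec_k T_n(a)$ stays in a fixed bounded region, or at least that no mass escapes to infinity, by relating $P_{k,n}(\lambda)$ to the solutions $z_j(\lambda)$ of the algebraic equation \eqref{algebraic equation}. Indeed, the standard approach (as in Schmidt--Spitzer and in B\"ottcher--Grudsky, Chapter~11) is to derive an exact or asymptotic formula for $\det T_n(z^{-k}(a-\lambda))$ in terms of the $z_j(\lambda)$ — roughly a Widom-type determinant formula expressing $P_{k,n}(\lambda)$ as a sum over $(q+k)$-element (resp.\ $(p-k)$-element, after reindexing) subsets $S$ of $\{1,\dots,p+q\}$ of terms behaving like $\left(\prod_{j\in S} z_j(\lambda)\right)^n$ times a subdeterminant.

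Granting such a formula, the logarithmic potential $U^{\mu_{k,n}}(\lambda) = -\frac{1}{n}\log|P_{k,n}(\lambda)| + \text{const}$ will, for $\lambda \notin \Gamma_k$, converge to $\sum_{j=1}^{q+k} \log|z_j(\lambda)|$ (up to an additive constant and a $\log|\gamma_{k,n}|/n$ term controlled by \eqref{leadingcoefficient}), because exactly one subset $S$ — namely $S=\{1,\dots,q+k\}$, the $q+k$ smallest roots, which is the dominant term precisely when $|z_{q+k}(\lambda)| < |z_{q+k+1}(\lambda)|$, i.e.\ $\lambda\notin\Gamma_k$ — dominates the determinant sum. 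From this I would conclude: (i) any accumulation point of $\spec_k T_n(a)$ lies in $\Gamma_k$ (away from $\Gamma_k$ the determinant is bounded away from zero on compact sets, after normalization), giving $\limsup_n \spec_k T_n(a) \subseteq \Gamma_k$; (ii) conversely every point of $\Gamma_k$ (except exceptional points) is a limit of zeros, by a standard argument: if $\lambda_0\in\Gamma_k$ were isolated from the zeros, the potentials $U^{\mu_{k,n}}$ would be harmonic near $\lambda_0$ and converge to $\sum_{j=1}^{q+k}\log|z_j|$, which is \emph{not} harmonic across $\Gamma_k$ since the dominant subset switches — contradiction; this gives $\Gamma_k \subseteq \liminf_n \spec_k T_n(a)$ and establishes \eqref{p en q limiet van eigenwaarde}. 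For the weak convergence \eqref{eq: th: convergence of measures}, I would use that the $\mu_{k,n}$ have uniformly bounded mass and (from the zero-location bound) are supported in a fixed compact set or are uniformly tight; hence along any subsequence $\mu_{k,n} \to \nu$ weakly for some positive measure $\nu$ on $\Gamma_k$ of the correct total mass. The principle of descent / lower envelope theorem identifies the limiting potential: $U^{\nu}(\lambda) = \lim_n U^{\mu_{k,n}}(\lambda)$ quasi-everywhere, and the right side equals $-\sum_{j=1}^{q+k}\log|z_j(\lambda)| + c_k$ off $\Gamma_k$. Since a measure is determined by its logarithmic potential, $\nu$ is uniquely pinned down, and comparing with \eqref{maat k} — whose potential is, by a contour-deformation / Plemelj computation, exactly $\sum_{j=1}^{q+k}\log|z_j(\lambda)|$ up to a constant — forces $\nu=\mu_k$. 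As the limit is subsequence-independent, the full sequence converges, giving \eqref{eq: th: convergence of measures} for bounded continuous $\phi$.

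The main obstacle I anticipate is twofold. First, making rigorous the determinant asymptotics for $P_{k,n}(\lambda)=\det T_n(z^{-k}(a-\lambda))$: one needs a clean Widom-type or transfer-matrix expansion, and one must handle uniformity in $\lambda$ on compact subsets of $\mathbb C\setminus\Gamma_k$ (and near exceptional points of $\Gamma_k$, where roots collide and the dominant-term analysis degenerates — these contribute a $o(n)$ set of zeros, but this must be argued, e.g.\ via a Hurwitz/argument-principle count). Second, handling the \emph{unbounded} support of $\Gamma_k$ for $k\neq 0$: unlike the classical $k=0$ case one must rule out zeros of $P_{k,n}$ escaping to infinity and control the mass at infinity, which is where the degree bound \eqref{degree Pnk} and the exact leading coefficient \eqref{leadingcoefficient} (along $n\equiv 0$) become essential — they fix the total mass and, via $\limsup_{|\lambda|\to\infty}$ behavior of $P_{k,n}(\lambda)/(\gamma_{k,n}\lambda^{d_{k,n}})$, give the needed tightness. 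I expect the tightness/mass-at-infinity argument, combined with Simeonov's unbounded-support energy theory cited in the excerpt, to be the technically delicate core; once that is in place, the rest follows the well-trodden potential-theoretic route used for Hirschman's theorem.
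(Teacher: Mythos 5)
Your proposal follows essentially the same route as the paper: Widom's determinant formula for $P_{k,n}$, identification of the dominant term $M_k$ off $\Gamma_k$, subsequential weak$^*$ compactness via the degree bound from Proposition~\ref{eigenschappen Pnk}, identification of the subsequential limit with $\mu_k$, tightness to upgrade weak$^*$ to weak convergence against bounded continuous functions, and finally a support argument for~\eqref{p en q limiet van eigenwaarde}. The one meaningful divergence is in the identification step. You propose to compare logarithmic potentials $-\frac{1}{n}\log|P_{k,n}(\lambda)|$ and invoke the lower envelope theorem / principle of descent. The paper instead compares Cauchy transforms: it shows $-\frac{P_{k,n}'(\lambda)}{n P_{k,n}(\lambda)} \to \frac{w_k'(\lambda)}{w_k(\lambda)} = \int \frac{{\rm d}\mu_k(x)}{x-\lambda}$ uniformly on compacts of $\C\setminus\Gamma_k$ (Corollary~\ref{cor2}, using Proposition~\ref{prop8}), and then uses injectivity of the Cauchy transform on $\Gamma_k$ to pin down any weak$^*$ subsequential limit. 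The Cauchy-transform route is technically lighter: it sidesteps the quasi-everywhere / upper regularization bookkeeping that the lower envelope theorem carries, and, more to the point, the logarithmic derivative kills the leading coefficient $\gamma_{k,n}$, so one never has to control $\frac{1}{n}\log|\gamma_{k,n}|$ for $n$ not divisible by $p$ (resp.\ $q$), where~\eqref{leadingcoefficient} gives no explicit value. Your potential-based variant can be made to work (e.g.\ by comparing $U^{\mu_{k,n}}(\lambda)-U^{\mu_{k,n}}(\lambda_0)$ to eliminate the additive constant), but you would need to address that normalization explicitly. Everything else — the Banach--Alaoglu compactness, the no-mass-at-infinity / tightness argument, and the harmonicity argument for $\Gamma_k\subseteq\liminf_n \spec_k T_n(a)$ (which is what the paper outsources to the argument in B\"ottcher--Grudsky) — matches the paper's proof.
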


The key element in the proof of Theorem \ref{theorem3} is a beautiful formula
of Widom \cite{Widom}, see \cite[Theorem 2.8]{Bottcher-Grudsky}, for the
determinant of a banded Toeplitz matrix. In the present situation
Widom's formula yields the following.
Let $\lambda \in \mathbb C$ be such that the solutions $z_j(\lambda)$ of
the algebraic equation \eqref{algebraic equation} are mutually distinct.
Then
\begin{equation} \label{eq: Widom in b is zmink a min lambda}
 P_{k,n}(\lambda)= \det T_n(z^{-k} (a-\lambda))=\sum_M C_M(\lambda) \left(w_M(\lambda)\right)^n,
\end{equation}
where the sum is over all subsets $M \subset \{1,2,\ldots, p+q\}$ of
cardinality $|M|=p-k$ and for each such $M$, we have
\begin{equation} \label{defwM}
  w_M(\lambda):= (-1)^{p-k} a_p \prod_{j\in M} z_j(\lambda),
\end{equation}
and (with $\overline{M}:=\{1,2,\ldots,p+q\}\setminus M$),
\begin{equation} \label{defCM}
  C_M(\lambda):=\prod_{j\in M} z_j(\lambda)^{q+k} \prod_{j\in M \atop l\in
  \overline{M}} (z_j(\lambda)-z_l(\lambda))^{-1}.
\end{equation}
The formula \eqref{eq: Widom in b is zmink a min lambda} shows that
for large $n$, the main contribution comes from those $M$ for
which $|w_M(\lambda)|$ is the largest possible. For
$\lambda \in \mathbb C \setminus \Gamma_k$ there is a unique
such $M$, namely
\begin{equation} \label{defMk}
    M = M_k := \{ q+k+1, q+k+2, \ldots, p+q\}
\end{equation}
because of the ordering \eqref{ordering in magnitude}.

\subsection{Overview of the rest of the paper}
In section \ref{section3} we will state some preliminary results about
analyticity properties of the solutions $z_j$ of the algebraic
equation \eqref{algebraic equation}. These results will be needed in
the proof of Theorem \ref{theorem1} which is given in section \ref{section4}. In
section \ref{section5} we will prove Proposition \ref{eigenschappen Pnk} and
Theorem \ref{theorem3}. Finally, we conclude the paper by giving
some examples in section \ref{section6}.

\section{Preliminaries} \label{section3}

In this section we collect a number of properties of the curves
$\Gamma_k$ and the solutions $z_1(\lambda),\ldots,z_{p+q}(\lambda)$ of
the algebraic equation \eqref{algebraic equation}.
For convenience we define throughout the rest of the paper
\[ \Gamma_{-q} = \Gamma_p = \emptyset, \qquad \textrm{ and } \qquad
 \mu_{-q} = \mu_{p} = 0.  \qquad \textrm{ (the zero-measure)}. \]
Occasionally we also use
\[ z_0(\lambda) = 0, \qquad z_{p+q+1}(\lambda) = +\infty. \]

\subsection{The structure of the curves $\Gamma_k$}

We start with a definition, cf.\ \cite[\S 11.2]{Bottcher-Grudsky}.
\begin{definition}
    A point $\lambda_0 \in \mathbb C$ is called a branch point
    if $a(z) - \lambda_0 = 0$ has a multiple root.
    A point $\lambda_0 \in \Gamma_k$ is an exceptional point of $\Gamma_k$
    if $\lambda_0$ is a branch point, or if there is no open neighborhood
    $U$ of $\lambda$ such that $\Gamma_k \cap U$ is an analytic arc starting
    and terminating on $\partial U$.
\end{definition}

If $\lambda_0$ is a branch point, then  there is a $z_0$ such that
$a(z_0) = \lambda_0$ and $a'(z_0) = 0$. Then we may assume that $z_0
= z_{q+k}(\lambda_0) = z_{q+k+1}(\lambda_0)$ for some $k$ and
$\lambda_0 \in \Gamma_k$. For a symbol $a$ of the form
\eqref{symbola}, the derivative $a'$ has exactly $p+q$ zeros
(counted with multiplicity), so that there are exactly $p+q$ branch
points counted with multiplicity.

The solutions $z_k(\lambda)$ also have branching at infinity
(unless $p=1$ or $q=1$).
There are $p$ solutions of \eqref{algebraic equation}
that tend to infinity as $\lambda \to \infty$, and $q$ solutions that tend to $0$.
Indeed, we have
\begin{equation} \label{asympzk}
    z_k(\lambda) = \left\{ \begin{array}{ll}
     c_k \lambda^{-1/q} (1+ \OO(\lambda^{-1/q})),
     & \textrm{ for } k =1, \ldots, q, \\[10pt]
     c_k \lambda^{1/p} (1+ \OO(\lambda^{-1/p})),
     & \textrm{ for } k = q+1, \ldots, p+q,
     \end{array} \right. \end{equation}
as $\lambda \to \infty$. Here $c_1, \ldots, c_q$ are
the $q$ distinct solutions of $c^q = a_{-q}$ (taken in some order
depending on $\lambda$), and $c_{q+1}, \ldots, c_{p+q}$
are the $p$ distinct solutions of $c^p = a_p^{-1}$ (again taken
in some order depending on $\lambda$).

The following proposition gives the structure of $\Gamma_k$ at infinity.
\begin{proposition} \label{prop2}
    Let $k \in \{-q+1, \ldots, p-1\} \setminus \{0\}$.
    Then there is an $R > 0$ such that
    $\Gamma_k \cap \{ \lambda \in \mathbb C \mid |\lambda| > R \}$ is a finite disjoint
    union of analytic arcs, each extending from $|\lambda| = R$ to infinity.
\end{proposition}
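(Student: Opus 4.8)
The plan is to analyze $\Gamma_k$ in a neighborhood of infinity by using the asymptotic expansions \eqref{asympzk} of the solutions $z_j(\lambda)$. First I would introduce the local coordinate $w = \lambda^{-1/(pq)}$ (or, more to the point, work separately with $\lambda^{-1/q}$ and $\lambda^{1/p}$), so that near $\lambda = \infty$ the branch points of the functions $z_j$ are resolved. In this coordinate the $q$ ``small'' solutions behave like $c_i \lambda^{-1/q}(1 + \OO(\lambda^{-1/q}))$ with $c_i^q = a_{-q}$, and the $p$ ``large'' solutions behave like $c_j \lambda^{1/p}(1 + \OO(\lambda^{-1/p}))$ with $c_j^p = a_p^{-1}$. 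Since for $k \neq 0$ the index $q+k$ is either $\le q-1$ (when $k<0$, so both $z_{q+k}$ and $z_{q+k+1}$ are among the small solutions) or $\ge q+1$ (when $k>0$, so both are large solutions), the condition $|z_{q+k}(\lambda)| = |z_{q+k+1}(\lambda)|$ near infinity becomes, to leading order, an equality of moduli of two distinct functions of the form $c\,\lambda^{\mp 1/q}$ or $c\,\lambda^{\pm 1/p}$ with different constants $c$.

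Next I would take logarithms. Writing $\lambda = r e^{i\theta}$, the leading-order equation $|z_{q+k}(\lambda)| = |z_{q+k+1}(\lambda)|$ reduces, after dividing out the common power of $|\lambda|$, to an equation of the form $\log|c_a| - \log|c_b| + (\text{lower order in } \lambda^{-1/q}\text{ or }\lambda^{-1/p}) = 0$, where $c_a \ne c_b$ are two of the relevant roots. The point is that $\log|c_a| - \log|c_b|$ is a nonzero real constant \emph{unless} $|c_a| = |c_b|$; but even when $|c_a|=|c_b|$, the roots of $c^q = a_{-q}$ (resp.\ $c^p = a_p^{-1}$) are equally spaced on a circle, so the \emph{first} nonvanishing term in the expansion of $\log|z_{q+k}| - \log|z_{q+k+1}|$ in powers of the local coordinate is governed by a nonconstant trigonometric polynomial in $\theta$. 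I would then apply the analytic implicit function theorem (in the resolving coordinate $w$) to the real-analytic function $F(\lambda) = \log|z_{q+k}(\lambda)| - \log|z_{q+k+1}(\lambda)|$: its zero set near $\infty$, after one verifies the gradient is nonvanishing outside finitely many points, is a finite union of real-analytic arcs. Choosing $R$ large enough that all branch points of the $z_j$, all intersection points of distinct sheets, and all critical points of $F$ lie in $|\lambda| \le R$, each such arc must run monotonically out to infinity; it cannot close up inside $|\lambda| > R$ because $F$ has no critical points there, and there are only finitely many because the arcs are separated.

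The main obstacle is the degenerate case where two of the limiting constants have equal modulus, $|c_a| = |c_b|$ with $c_a \ne c_b$, so that the leading real part cancels and one must extract the next term in \eqref{asympzk} to see that $\Gamma_k$ near infinity is asymptotic to finitely many rays $\arg \lambda = \text{const}$ (up to lower-order corrections). Handling this cleanly requires pushing the expansions \eqref{asympzk} one order further and checking that the resulting equation in $\theta$ is genuinely nonconstant — which it is, because the distinct $q$th (or $p$th) roots differ by nontrivial roots of unity. Once this is in hand, the count of arcs is at most the number of such solutions $\theta$, which is finite, and the proposition follows. I would also remark that the structure of $\Gamma_k$ inside $|\lambda| \le R$ (finitely many arcs and exceptional points) is the same kind of argument already used for $\Gamma_0$ in \cite[Theorem 11.9]{Bottcher-Grudsky}, so the only genuinely new content is the behavior at infinity treated above.
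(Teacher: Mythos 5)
The paper's own proof of this proposition is a one-sentence pointer to the analogous local structure theorem near a finite branch point in B\"ottcher--Grudsky, with the details omitted; your sketch is essentially the argument being alluded to (resolve the branching at infinity in a local coordinate, expand the branches via \eqref{asympzk}, apply the implicit function theorem to the zero set of $\log|z_{q+k}|-\log|z_{q+k+1}|$), so the approach matches the paper's intent.

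Two points of substance. First, what you call the ``degenerate case'' $|c_a|=|c_b|$ is in fact the \emph{only} case: for $k<0$ both relevant constants are $q$-th roots of $a_{-q}$, and for $k>0$ both are $p$-th roots of $a_p^{-1}$, so they always have equal modulus; your proof would read more cleanly if it opened with this observation rather than first discussing $|c_a|\neq|c_b|$, which never occurs. Second, and more seriously, the claim that the first nonvanishing term in the expansion of $\log|z_{q+k}|-\log|z_{q+k+1}|$ is a nonconstant trigonometric polynomial in $\theta$ is asserted but not justified --- ``because the distinct $q$-th roots differ by nontrivial roots of unity'' is not a proof, since the relevant correction coefficients could a priori conspire to make each order of the expansion rotation-invariant. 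This is precisely where the normalization hypothesis \eqref{assumption} must enter, and it deserves to be made explicit. One clean way: if every order were $\theta$-independent, then $|z_{q+k}(\lambda)/z_{q+k+1}(\lambda)|\equiv 1$ on an open set near infinity; by the open mapping theorem $z_{q+k}=e^{i\alpha}z_{q+k+1}$ there for a constant $\alpha$, hence $a(e^{i\alpha}w)=a(w)$ identically, which forces $e^{ij\alpha}=1$ for every $j$ with $a_j\neq 0$, and \eqref{assumption} then gives $\alpha=0$, a contradiction. You should also be a bit more careful that the globally labeled functions $z_{q+k}$, $z_{q+k+1}$ are not analytic across $\Gamma_k$ (they swap there), so the implicit function theorem must be applied to a locally chosen pair of analytic branches rather than to $F$ as written. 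With these repairs your argument is complete and in the spirit of the proof the paper omits.
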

\begin{proof}
The proof is similar to the proof of \cite[Proposition 11.8]{Bottcher-Grudsky}
where a similar structure theorem was proved for finite branch points.
We omit the details.
\end{proof}

It follows from Proposition \ref{prop2} that the exceptional
points for $\Gamma_k$ are in a bounded set. Since the set of exceptional
point is discrete we conclude that there are only finitely many
exceptional points. Then  we have the following
result about the structure of $\Gamma_k$.

\begin{proposition} \label{prop3}
For every $k \in \{-q+1, \ldots, p-1\}$, the set $\Gamma_k$
is the disjoint union of a finite number of open analytic arcs
and a finite number of exceptional points. The set $\Gamma_k$ has no isolated points.
\end{proposition}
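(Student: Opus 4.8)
The plan is to analyze $\Gamma_k$ locally, separating the finitely many exceptional points from the rest. First I would recall that $\Gamma_k$ is, by definition \eqref{defGammak}, the set of $\lambda$ where $|z_{q+k}(\lambda)| = |z_{q+k+1}(\lambda)|$. Away from the branch points, the functions $z_j(\lambda)$ are locally well-defined analytic functions (this is one of the preliminary facts to be established in this section), and near a non-branch point $\lambda_0 \in \Gamma_k$ at which the two relevant roots are distinct, $|z_{q+k}|$ and $|z_{q+k+1}|$ are the moduli of two distinct analytic functions. I would then invoke the standard argument (as in \cite[\S 11.2]{Bottcher-Grudsky}) that the zero set of $\log|f(\lambda)/g(\lambda)|$ for two distinct nonvanishing analytic functions $f,g$ is locally either empty, or a single analytic arc, or a finite union of analytic arcs through $\lambda_0$ — the last case being exactly what the definition of ``exceptional point'' excludes. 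Thus every point of $\Gamma_k$ that is not exceptional has a neighborhood in which $\Gamma_k$ is a single analytic arc.

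Next I would assemble the global picture. By Proposition \ref{prop2} and the remark following it, the exceptional points of $\Gamma_k$ lie in a bounded set, and since they form a discrete set (each is isolated: a branch point is isolated among branch points since $a'$ has finitely many zeros, and the other exceptional points are isolated by the local analysis above), there are only finitely many of them. Removing these finitely many exceptional points from $\Gamma_k$ leaves a set that is locally a single analytic arc at each of its points, hence a one-dimensional analytic manifold, which is therefore a disjoint union of open analytic arcs; by Proposition \ref{prop2} together with compactness considerations for the bounded part, this union is finite. This gives the first assertion: $\Gamma_k$ is the disjoint union of finitely many open analytic arcs and finitely many exceptional points.

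The remaining claim — that $\Gamma_k$ has no isolated points — requires showing that each exceptional point $\lambda_0$ is a limit of other points of $\Gamma_k$. For this I would use a connectedness/topology argument on the roots near $\lambda_0$: locally, on a punctured neighborhood of a branch point, the roots $z_{q+k}, z_{q+k+1}$ are given by convergent Puiseux series in $(\lambda - \lambda_0)^{1/m}$ for some $m \geq 2$, and the condition $|z_{q+k}| = |z_{q+k+1}|$ then reads $\mathrm{Re}\,\big(h((\lambda-\lambda_0)^{1/m})\big) = 0$ for an analytic function $h$ with a zero of known order at the origin; the vanishing locus of the real part of such a function always contains arcs emanating from the origin (the real part of an analytic function vanishing to order $\ell$ has $2\ell$ local arcs through the zero by the standard argument about $\mathrm{Re}\, w^\ell$), so $\lambda_0$ is a non-isolated point of $\Gamma_k$. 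For the non-branch exceptional points, the same conclusion follows directly from the local description of the zero set of $\log|f/g|$, which in the exceptional (non-arc) case still contains arcs through $\lambda_0$.

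The main obstacle I anticipate is the careful local analysis at exceptional points: one must handle both the branch points — where the relevant roots coalesce and one has to pass to Puiseux expansions and track the subtleties of labeling the roots consistently by magnitude across the branch cut — and the non-branch exceptional points, and in each case verify that the configuration forces $\lambda_0$ to be a limit of points of $\Gamma_k$ rather than genuinely isolated. The global finiteness statement and the decomposition into analytic arcs are comparatively routine once the local structure and the finiteness of the exceptional set (from Proposition \ref{prop2}) are in hand.
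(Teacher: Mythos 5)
Your proof is correct and follows essentially the same route as the paper, which simply cites the $k=0$ case from Schmidt--Spitzer and B\"ottcher--Grudsky \cite[Theorem 11.9]{Bottcher-Grudsky} and then observes that, given the finiteness of the exceptional set (which comes from Proposition~\ref{prop2}), the general-$k$ case follows in the same way. You have in effect reconstructed the content of those cited arguments --- the local analyticity away from exceptional points, the Puiseux-expansion analysis at branch points, and the identification of arcs in the zero set of $\operatorname{Re}\log(z_{q+k}/z_{q+k+1})$ --- which the paper leaves implicit behind the reference.
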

\begin{proof}
This was proved for $k=0$ in \cite{Schmidt-Spitzer} and
\cite[Theorem 11.9]{Bottcher-Grudsky}.
For general $k$, there are only finitely
many exceptional points and the proof follows in a similar way.
\end{proof}

\subsection{The Riemann surface}

From Proposition \ref{prop3} it follows that the curves $\Gamma_k$
can be taken as cuts for the $p+q$-sheeted Riemann surface of the algebraic
equation \eqref{algebraic equation}. We number the sheets from $1$ to $p+q$, where
the $k$th sheet of the Riemann surface is
\begin{equation} \label{defRk}
    \mathcal R_k =  \{\lambda \in \mathbb C \mid |z_{k-1}(\lambda)| < |z_k(\lambda)| < |z_{k+1}(\lambda)| \}
    = \mathbb C \setminus (\Gamma_{-q+k-1} \cup \Gamma_{-q+k}). \end{equation}
Thus $z_k$ is well-defined and analytic on $\mathcal R_k$.

The easiest case to visualize is the case where consecutive cuts are disjoint,
that is, $\Gamma_{-q+k-1} \cap \Gamma_{-q+k} = \emptyset$ for every $k = 2, \ldots, p+q-2$.
In that case we have that $\mathcal R_k$ is connected to $\mathcal R_{k+1}$ via $\Gamma_{-q+k}$
in the usual crosswise manner, and  $z_{k+1}$ is the analytic continuation of $z_k$ across
$\Gamma_{-q+k}$.

The general case is described in the following proposition.

\begin{proposition} \label{analytic-continuation}
Suppose $A$ is an open analytic arc such
that $A \subset \Gamma_{-q+k}$, for $k = k_1, \ldots, k_2$,
and $A \cap (\Gamma_{-q + k_1 -1} \cup \Gamma_{-q + k_2+1}) = \emptyset$.
Then for $k=k_1, \ldots, k_2+1$, we have that the analytic continuation
of $z_k$ across $A$ is equal to $z_{k_1+k_2-k+1}$. Thus across $A$, we have
that $\mathcal R_k$ is connected to $\mathcal R_{k_1+k_2-k+1}$.
\end{proposition}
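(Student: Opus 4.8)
The plan is to work locally near a point $\lambda_0$ of the arc $A$ and to track how the ordering \eqref{ordering in magnitude} of the moduli $|z_j(\lambda)|$ is rearranged as $\lambda$ crosses $A$. By hypothesis, on the arc $A$ we have $|z_{q+k}(\lambda)| = |z_{q+k+1}(\lambda)|$ for each $k = k_1, \ldots, k_2$, and no other consecutive moduli coincide (because $A$ avoids $\Gamma_{-q+k_1-1}$ and $\Gamma_{-q+k_2+1}$, and in any case avoids branch points since it is an open analytic arc in each $\Gamma_{-q+k}$). Hence along $A$ the common value $r(\lambda) := |z_{k_1}(\lambda)| = |z_{k_1+1}(\lambda)| = \cdots = |z_{k_2+1}(\lambda)|$ is shared by exactly the indices $k_1, k_1+1, \ldots, k_2+1$, and these moduli are strictly separated from $|z_{k_1-1}(\lambda)|$ below and $|z_{k_2+2}(\lambda)|$ above. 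So locally the Riemann surface near $A$ splits: the sheets with indices outside $\{k_1, \ldots, k_2+1\}$ are unaffected by crossing $A$, and we only have to understand the gluing among the $k_2 - k_1 + 2$ sheets $\mathcal R_{k_1}, \ldots, \mathcal R_{k_2+1}$.

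First I would pick a small disk $U$ around $\lambda_0 \in A$ in which the only part of any $\Gamma_m$ is the arc $A$ itself, so that $U \setminus A$ has two components $U^+$ and $U^-$. On each of $U^+, U^-$ all $p+q$ branches $z_j$ are single-valued and analytic, and the branches labeled $k_1, \ldots, k_2+1$ are distinguished from the rest purely by the fact that their common modulus lies strictly between $|z_{k_1-1}|$ and $|z_{k_2+2}|$. Thus the \emph{set} $\{z_{k_1}(\lambda), \ldots, z_{k_2+1}(\lambda)\}$ extends analytically (as an unordered set of branch values) across $A$; the question is only which ordered branch on $U^+$ matches which on $U^-$. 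The key observation is that on $A$ the moduli $|z_{k_1}|, \ldots, |z_{k_2+1}|$ are all \emph{equal}, so the ordering by modulus degenerates there and the labels are assigned arbitrarily on each side. I would then argue that the analytic continuation must reverse the order: a branch that approaches $A$ from $U^+$ as the one with, say, the largest argument among those with modulus $r$ will, after continuation to $U^-$, sit in the ordering so that $z_k \mapsto z_{k_1 + k_2 - k + 1}$. Concretely, one can use that crossing a single analytic arc of a level set $|f| = \mathrm{const}$ of a holomorphic function interchanges "just above" and "just below" that level — the standard crosswise gluing — and then compose these interchanges across the nested collection $\Gamma_{-q+k_1}, \ldots, \Gamma_{-q+k_2}$ all sharing the support $A$. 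Iterating the elementary transposition structure, the net permutation of the index block $\{k_1, \ldots, k_2+1\}$ is exactly the order-reversing involution $k \mapsto k_1 + k_2 + 1 - k$, which is the claim.

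The cleanest way to pin down the permutation, and the step I expect to be the main obstacle, is to verify that crossing $A$ really does reverse the full block rather than producing some other involution of $\{k_1,\dots,k_2+1\}$. Here I would use a continuity/parametrization argument: move $\lambda$ along a small loop around a point of $A$, or better, deform the situation to the generic case treated just before the proposition — where the cuts $\Gamma_{-q+k-1}$ and $\Gamma_{-q+k}$ are disjoint and crossing each one is the simple crosswise identification $z_j \leftrightarrow z_{j+1}$ — and then take a limit in which these arcs coalesce onto the common arc $A$. Composing the transpositions $(k_1\ k_1{+}1)$, then $(k_1{+}1\ k_1{+}2)$, \ldots, in the order dictated by the nesting of the level sets of the relevant holomorphic functions (the elementary symmetric-type functions $z_{k_1}\cdots z_{j}$ whose moduli cut out the $\Gamma_m$'s), yields the reversal permutation; this is the computation I would carry out in detail. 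Finally, from the identification of branches it follows immediately that $\mathcal R_k$ is glued to $\mathcal R_{k_1+k_2-k+1}$ across $A$, since $\mathcal R_k = \mathbb C \setminus (\Gamma_{-q+k-1}\cup \Gamma_{-q+k})$ is exactly the sheet on which $z_k$ is the analytic branch, completing the proof.
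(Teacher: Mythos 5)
Your setup is correct and matches the paper's first step: working in a small disk $U$ around a regular $\lambda_0\in A$, noting that the moduli $|z_{k_1}|,\dots,|z_{k_2+1}|$ are tied together on $A$ and separated from the rest, and reducing the claim to determining a permutation $\pi$ of the block $\{k_1,\dots,k_2+1\}$ that records how branches are matched across $A$. The gap is in the step you yourself flag as the main obstacle — identifying $\pi$ as the order-reversing involution.

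The ``compose the elementary transpositions'' heuristic does not give the reversal. If the arcs $\Gamma_{-q+k_1},\dots,\Gamma_{-q+k_2}$ were disjoint, crossing $\Gamma_{-q+m}$ one at a time swaps the pair $(m,m+1)$, and composing the adjacent transpositions $(k_1\,k_1{+}1),\ (k_1{+}1\,k_1{+}2),\dots,(k_2\,k_2{+}1)$ in any linear order produces a \emph{cyclic} permutation of the block, not the involution $k\mapsto k_1+k_2+1-k$ (e.g.\ for a block of size $3$, composing $(12)$ and $(23)$ gives a $3$-cycle, while the reversal is $(13)$). Moreover, a degeneration argument — coalescing disjoint cuts into one arc and appealing to continuity of the gluing — is not carried out: you would need to show that such a deformation of the symbol $a$ exists, that the sheet structure varies continuously along it, and, most importantly, that the coalesced gluing is actually the limit of the composed transpositions, which is precisely where the naive count goes wrong (the coalesced situation is not a composition of one-at-a-time crossings, since the modular ordering degenerates simultaneously).

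The paper proves the order reversal directly with the maximum modulus principle, without any degeneration: if $\pi$ were \emph{not} order-reversing, there would exist $k<k'$ in the block with $\pi(k)<\pi(k')$; then the function $\phi$ defined to be $z_k/z_{k'}$ on the $+$-side of $A$ and $z_{\pi(k)}/z_{\pi(k')}$ on the $-$-side continues analytically across $A$, satisfies $|\phi|<1$ off $A$ (strict ordering of moduli on each side) and $|\phi|=1$ on $A$, contradicting the maximum principle. This forces $\pi$ to be strictly order-reversing on the block, hence $\pi(k)=k_1+k_2+1-k$. I would replace your second paragraph with this argument, or make precise why a degeneration must produce the reversal — as written, the permutation is not pinned down and the natural reading of your composition gives the wrong answer.
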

\begin{proof}
We have that
\[ |z_{k_1}(\lambda)| = |z_{k_1+1}(\lambda)| = \cdots
    = |z_{k_2}(\lambda)| = |z_{k_2+1}(\lambda)| \]
for $\lambda \in A$, with strict inequalities ($<$) for $\lambda$ on either side
of $A$. Choose an orientation for $A$. Then there is a permutation
$\pi$ of $\{k_1, \ldots, k_2+1\}$ such that $z_{\pi(k)}$ is the
analytic continuation of $z_k$ from the $+$-side of $A$ to the
$-$-side of $A$.

Assume that there are $k, k' \in \{k_1, \ldots, k_2+1\}$ such that
$k < k'$ and $\pi(k) < \pi(k')$. Take a regular $\lambda_0 \in A$
and a small neighborhood $U$ of $\lambda_0$ such that
$A \cap U = \Gamma_{-q+k} \cap U = \Gamma_{-q+k'} \cap U$
and $A \cap U$ is an analytic arc starting and terminating on $\partial U$.
Then we have a disjoint union $U = U_+ \cup U_- \cup (A \cap U)$
where $U_+$ ($U_-$) is the part of $U$ on the $+$-side ($-$-side) of $A$.
The function $\phi$ defined by
\[ \phi(\lambda)  = \left\{ \begin{array}{ll}
     \frac{z_k(\lambda)}{z_{k'}(\lambda)}, & \textrm{ for } \lambda \in U_+, \\[5pt]
     \frac{z_{\pi(k)}(\lambda)}{z_{\pi(k')}(\lambda)}, & \textrm{ for } \lambda \in U_-,
     \end{array} \right. \]
has an analytic continuation to $U$, and satisfies
$|\phi(\lambda)| < 1$ for $\lambda \in U_+ \cup U_-$ and $|\phi(\lambda)| = 1$
for $\lambda \in A \cap U$. This contradicts the maximum principle for analytic
functions. Therefore $\pi(k) > \pi(k')$ for every $k, k' \in \{k_1, \ldots, k_2 +1\}$
with $k < k'$, and this implies that $\pi(k) = k_1 + k_2 - k+1$ for every $k = k_1, \ldots, k_2 + 1$,
and the proposition follows.
\end{proof}

\subsection{The functions $w_k(\lambda)$}

A major role is played by the functions $w_k$,
which for $k \in \{-q+1, \ldots, p-1\}$, are defined by
\begin{equation} \label{defwk}
    w_k(\lambda) = \prod_{j=1}^{q+k} z_j(\lambda),
    \qquad \textrm{ for } \lambda \in \mathbb C \setminus \Gamma_k.
    \end{equation}
Note that $w_k=(-1)^{p-k}a_p^{-1} w_{\{1,\ldots,k\}}$ in the
notation of \eqref{defwM}.
\begin{proposition} \label{analyticity wk}
    The function $w_k$ is analytic in $\mathbb C \setminus \Gamma_k$.
\end{proposition}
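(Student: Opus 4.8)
The plan is to prove that $w_k$ extends to a holomorphic function on $\C\setminus\Gamma_k$ in three moves: show that $w_k$ is well defined and continuous on all of $\C\setminus\Gamma_k$; observe that it is holomorphic on the explicit open subset $\C\setminus(\Gamma_{-q+1}\cup\cdots\cup\Gamma_k)$ for trivial reasons; and bridge the gap between these two sets by removability.

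For the continuity, multiply \eqref{algebraic equation} by $z^q$ to obtain a polynomial equation of degree exactly $p+q$ in $z$ whose leading coefficient $a_p$ and constant term $a_{-q}$ are nonzero and independent of $\lambda$. Hence the unordered $(p+q)$-tuple of its roots — a point of $\C^{p+q}$ when expressed through the elementary symmetric functions — depends holomorphically, in particular continuously, on $\lambda\in\C$, so the sorted vector of moduli $(|z_1(\lambda)|,\dots,|z_{p+q}(\lambda)|)$ is continuous on $\C$. On $\C\setminus\Gamma_k$ the strict inequality $|z_{q+k}(\lambda)|<|z_{q+k+1}(\lambda)|$ holds, so near any $\lambda_0\in\C\setminus\Gamma_k$ the $q+k$ roots of smallest modulus form, as a multiset, a continuous function of $\lambda$; their product is exactly $w_k(\lambda)=\prod_{j=1}^{q+k}z_j(\lambda)$, which is therefore well defined and continuous on $\C\setminus\Gamma_k$. (Only the strict separation is used here, no regularity of the curves.) For the second move, each $z_j$ is analytic on $\mathcal R_j=\C\setminus(\Gamma_{-q+j-1}\cup\Gamma_{-q+j})$ by \eqref{defRk}, and $\bigcap_{j=1}^{q+k}\mathcal R_j=\C\setminus(\Gamma_{-q+1}\cup\cdots\cup\Gamma_k)$ since $\Gamma_{-q}=\emptyset$; hence $w_k$ is holomorphic there.

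It remains to extend $w_k$ across $Z:=(\Gamma_{-q+1}\cup\cdots\cup\Gamma_{k-1})\setminus\Gamma_k$, the difference of the two domains. By Proposition \ref{prop3}, $Z$ is contained in a finite union of open analytic arcs together with a finite set of points. Localizing at any point not in that finite set, $w_k$ is continuous and holomorphic off finitely many $C^1$ arcs, hence holomorphic there by Morera's theorem; this makes $w_k$ holomorphic on $\C\setminus\Gamma_k$ minus a finite set of points, at each of which $w_k$ is bounded by the continuity just proved, so they are removable singularities. Thus $w_k$ is holomorphic on all of $\C\setminus\Gamma_k$. As a cross-check, Proposition \ref{analytic-continuation} shows that the analytic continuation of $w_k$ across an analytic arc of $\Gamma_{k'}$ with $k'\ne k$ reverses a block of consecutive factors $z_{q+m_1},\dots,z_{q+m_2+1}$, where $\{m_1,\dots,m_2\}$ is a set of consecutive indices not containing $k$; this block therefore lies entirely inside $\{1,\dots,q+k\}$ (if $k>m_2$) or entirely outside it (if $k<m_1$), so the symmetric product $w_k$ is genuinely single-valued across such an arc.

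The argument is essentially soft; the two points needing some care are that $w_k$ really is continuous on $\C\setminus\Gamma_k$ even though the individual branches $z_j$ are defined only up to an arbitrary reordering where their moduli coincide — which is exactly why one works with the product over the $q+k$ smallest branches rather than with them individually — and that the potential singular set $Z$ has the structure (finitely many analytic arcs plus a finite set of points) making the removability theorems applicable, which is where the structure theory of the curves $\Gamma_k$ from Proposition \ref{prop3} is invoked. Neither is a serious obstacle.
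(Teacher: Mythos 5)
Your proof is correct and takes essentially the paper's route: holomorphy of $w_k$ on the obvious open set $\mathbb{C}\setminus\bigcup_{j=1}^{q+k}\Gamma_{-q+j}$, gluing across the open analytic arcs of $(\Gamma_{-q+1}\cup\cdots\cup\Gamma_{k-1})\setminus\Gamma_k$, and removability at the finitely many exceptional points where $w_k$ stays bounded. The only minor variation is how you justify the gluing: the paper matches the boundary values $w_{k+}=w_{k-}$ directly by combining the permutation structure of Proposition \ref{analytic-continuation} with the symmetry of $w_k$ in $z_1,\dots,z_{q+k}$, whereas you first prove continuity of $w_k$ on all of $\mathbb{C}\setminus\Gamma_k$ from continuity of the sorted root moduli plus the strict gap $|z_{q+k}|<|z_{q+k+1}|$ and then invoke Morera — and you even record the paper's permutation argument as a cross-check, so both formulations of the same gluing principle appear.
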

\begin{proof}
    Since $z_j$ is analytic on $\mathcal R_j = \mathbb C \setminus (\Gamma_{-q+j-1} \cup \Gamma_{-q+j})$,
    see \eqref{defRk}, we obtain from its definition that
    $w_k$ is analytic in $\mathbb C \setminus \bigcup_{j=1}^{k+q} \Gamma_{-q+j}$.
  Let $A$ be an analytic arc in  $\Gamma_{-q+j} \setminus \Gamma_{k}$ for some
  $j < k+q$. Choose an orientation on $A$. Since the arc is disjoint from $\Gamma_{k}$,
  we have that $z_{j+}(\lambda) = z_{\pi(j)-}(\lambda)$, for $\lambda \in A$ and $j=1, \ldots, q+k$, where
  $\pi$ is a permutation of $\{1, \ldots, q+k\}$.
  Since $w_k$ is symmetric in the $z_j$'s for $j=1, \ldots, q+k$, it then follows that
  \[ w_{k+}(\lambda) = w_{k-}(\lambda), \qquad \textrm{ for } \lambda \in A, \]
  which shows the analyticity in $\mathbb C \setminus \Gamma_{k}$ with the possible
  exception of isolated singularities at the exceptional points of $\Gamma_{-q+1}$,
  $\Gamma_{-q+2}$, \ldots, $\Gamma_{k-1}$. However, each $z_j$, and therefore also $w_k$, is bounded near
  such an exceptional point, so that any isolated singularity is removable.
\end{proof}

In the rest of the paper we make frequently use of the logarithmic
derivative $w_k'/w_k$ of $w_k$. By the fact that  $w_k$ does not
vanish on $\C\setminus \Gamma_k$ and Proposition \ref{analyticity
wk}, it follows that $w_k'/w_k$ is analytic in $\C\setminus
\Gamma_k$. By Proposition \ref{analytic-continuation} it moreover
has an analytic continuation across every open analytic arc
$A\subset\Gamma_k$. Near the exceptional points that are no branch
points $w_k'/w_k$ remains bounded. At the branch points it can
however have singularities of a certain order.
\begin{proposition} \label{prop6}
    Let $\lambda_0 \in \Gamma_k$ be a branch point of $\Gamma_k$.
    Then there exists an $m\in \N$ such that
    \begin{equation} \label{eq: prop6}
        \frac{w_k'(\lambda)}{w_k(\lambda)} =
            \OO\left((\lambda - \lambda_0)^{-m/(m+1)}\right),
    \end{equation}
    as $\lambda \to \lambda_0$ with $\lambda \in \mathbb C \setminus \Gamma_k$.
\end{proposition}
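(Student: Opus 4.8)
The plan is to express the logarithmic derivative $w_k'/w_k$ as an explicit finite sum over the solutions $z_1(\lambda),\dots,z_{q+k}(\lambda)$ of \eqref{algebraic equation}, and then to estimate each summand near $\lambda_0$ by means of the Puiseux expansions of the $z_j$.

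First I would derive the identity. Fix a disk $D$ around $\lambda_0$ containing no branch point other than $\lambda_0$; this is possible since branch points are isolated. For $\lambda\in D\setminus\{\lambda_0\}$ equation \eqref{algebraic equation} has $p+q$ distinct solutions, each locally an analytic function of $\lambda$, and differentiating $a(z_j(\lambda))=\lambda$ gives $z_j'(\lambda)=1/a'(z_j(\lambda))$, with $a'(z_j(\lambda))\neq0$ since $z_j(\lambda)$ is a simple zero. As $w_k=\prod_{j=1}^{q+k}z_j$ on $\mathbb C\setminus\Gamma_k$, this yields
\[
   \frac{w_k'(\lambda)}{w_k(\lambda)}
   =\sum_{j=1}^{q+k}\frac{z_j'(\lambda)}{z_j(\lambda)}
   =\sum_{j=1}^{q+k}\frac{1}{z_j(\lambda)\,a'(z_j(\lambda))},
   \qquad \lambda\in (D\setminus\{\lambda_0\})\cap(\mathbb C\setminus\Gamma_k).
\]

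Next I would estimate the summands as $\lambda\to\lambda_0$ inside $\mathbb C\setminus\Gamma_k$. Each $z_j(\lambda)$ tends to a zero $z_0$ of $a(z)-\lambda_0$; since the solutions of \eqref{algebraic equation} are the zeros of the polynomial $z^q(a(z)-\lambda)$, whose constant term equals $a_{-q}\neq0$, we have $z_0\neq0$ and $|z_j(\lambda)|$ stays bounded away from $0$. If $z_0$ is a simple zero of $a(z)-\lambda_0$ then $a'(z_0)\neq0$ and the corresponding term stays bounded. If $z_0$ is a zero of order $\ell\geq2$, then $a'$ has a zero of order $\ell-1$ at $z_0$, the Puiseux expansion gives $z_j(\lambda)-z_0=c(\lambda-\lambda_0)^{1/\ell}(1+o(1))$ with $c\neq0$, and $a'(z)=C(z-z_0)^{\ell-1}(1+o(1))$ with $C\neq0$ near $z_0$; hence $|a'(z_j(\lambda))|\geq c'|\lambda-\lambda_0|^{(\ell-1)/\ell}$ for some $c'>0$ near $\lambda_0$, so that term is $\OO\bigl((\lambda-\lambda_0)^{-(\ell-1)/\ell}\bigr)$. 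Letting $m$ be the largest order of vanishing of $a'$ among the zeros of $a(z)-\lambda_0$ (so $m\in\N$ and $m\geq1$ because $\lambda_0$ is a branch point) and using that $x\mapsto x/(x+1)$ is increasing, so $(\ell-1)/\ell\leq m/(m+1)$, each of the at most $q+k$ summands is $\OO\bigl((\lambda-\lambda_0)^{-m/(m+1)}\bigr)$; summing gives \eqref{eq: prop6}.

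The one point needing care — and the main, if minor, obstacle — is that the identity for $w_k'/w_k$ must be valid on all of $D\cap(\mathbb C\setminus\Gamma_k)$, although the other curves $\Gamma_{k-1},\Gamma_{k+1},\dots$ may pass through $\lambda_0$ and along their arcs the individual branches $z_j$ get permuted, so $\prod_{j\le q+k}z_j$ need not be a product of globally defined branches on a component of $D\setminus\Gamma_k$. This is handled by observing that the \emph{unordered} set of solutions of \eqref{algebraic equation} lying in $\{\,|z|<|z_{q+k+1}(\lambda)|\,\}$ — which is exactly $\{z_1(\lambda),\dots,z_{q+k}(\lambda)\}$ when $\lambda\notin\Gamma_k$ — changes only as $\lambda$ crosses $\Gamma_k$. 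Hence the right-hand side of the identity, being a symmetric function of this set, is a single-valued analytic function on each connected component of $D\setminus\Gamma_k$, and since it agrees with $w_k'/w_k$ wherever \eqref{algebraic equation} has distinct roots (a dense open subset of each component, as the only branch point in $D$ is $\lambda_0$), it agrees everywhere on $D\cap(\mathbb C\setminus\Gamma_k)$.
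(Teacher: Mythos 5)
Your proof is correct and takes essentially the same approach as the paper's: both arguments reduce to Puiseux expansions of the $z_j$ near $\lambda_0$, with the only cosmetic difference that you estimate $z_j'(\lambda)$ via the identity $z_j'=1/a'(z_j)$ while the paper inverts the local expansion $a(z)=\lambda_0+c_0(z-z_0)^{m_0+1}(1+\OO(z-z_0))$ directly, yielding the same exponent $-m/(m+1)$. Your closing remark on the validity of $w_k'/w_k=\sum_{j\le q+k} z_j'/z_j$ across neighboring $\Gamma_l$ is a welcome rigor check; the paper handles this point once and for all in Proposition~\ref{analyticity wk}.
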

\begin{proof}
Let $1\leq j\leq q+k$. We investigate the behavior of
$z_j(\lambda)$ when $\lambda\to \lambda_0$ such that $\lambda$
remains in a connected component of $\C\setminus (\Gamma_{j-1}\cup
\Gamma_{j})$. Then $z_j(\lambda)\to z_0$ for some $z_0\in \C$ with
$a(z_0)=\lambda_0$. Let $m_0+1$ be the multiplicity of $z_0$ as a
solution of $a(z)=\lambda_0$. Then
\begin{equation}\label{eq: lem: a(z) near yj}
  a(z)=\lambda_0+ c_0(z-z_0)^{m_0+1}(1+\OO(z-z_0)), \qquad
  z\to z_0,
\end{equation}
for some nonzero constant $c_0$. Therefore,
\begin{align} \label{eq: zj near lambda0}
  z_j(\lambda) &=z_0+\OO((\lambda-\lambda_0)^{1/(m_0+1)}),
\end{align}
and
\begin{align}
     \label{eq: zjprime near lambda0}
    z_j'(\lambda) & = \OO((\lambda-\lambda_0)^{-m_0/(m_0+1)}),
\end{align}
for $\lambda\to \lambda_0$ such that $\lambda$ remains in the same
connected component of $\C\setminus (\Gamma_{j-1}\cup \Gamma_{j})$.
Let $m$ be the maximum of all the multiplicities of the
roots of $a(z)=\lambda_0$. Then it follows from
\eqref{eq: zj near lambda0} and \eqref{eq: zjprime near lambda0}
that
\[ \frac{z_j'(\lambda)}{z_j(\lambda)} = \OO((\lambda-\lambda_0)^{-m/(m+1)})
\]
as $\lambda \to \lambda_0$ with $\lambda \in \mathbb C \setminus \Gamma_k$.
Then we obtain \eqref{eq: prop6} in view of \eqref{defwk}.
\end{proof}

 We end this section by giving the asymptotics of
$w_k'/w_k$ for $\lambda \to \infty$.
\begin{proposition} \label{prop5}
    As $\lambda \to \infty$ with $\lambda \in \mathbb C \setminus \Gamma_k$,
     we have
    \begin{equation} \frac{w_k'(\lambda)}{w_k(\lambda)} =
        \begin{cases}
        -\frac{q+k}{q} \lambda^{-1}
        + \OO\left(\lambda^{-1 - 1/q} \right),
            & \textrm{ for } k = -q+1, \ldots, -1, \\[5pt]
        - \lambda^{-1} + \OO(\lambda^{-2}),
            & \textrm{ for } k =0, \\[5pt]
        - \frac{p-k}{p} \lambda^{-1}
        + \OO\left(\lambda^{-1-1/p} \right),
            & \textrm{ for } k = 1, \ldots, p-1. \end{cases}
        \end{equation}
\end{proposition}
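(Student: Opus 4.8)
The plan is to deduce the result from the asymptotics \eqref{asympzk} of the individual roots, reducing the computation of $w_k'/w_k$ to a sum of logarithmic derivatives $z_j'/z_j$, and to handle the case $k=0$ separately by exploiting that $\Gamma_0$ is compact. On $\C\setminus\Gamma_k$ one has $w_k=\prod_{j=1}^{q+k}z_j$ (and $w_k$ is analytic and zero-free there by Proposition~\ref{analyticity wk}), hence $w_k'/w_k=\sum_{j=1}^{q+k}z_j'/z_j$; by Proposition~\ref{prop2} it suffices to let $\lambda\to\infty$ inside one of the finitely many sectoral components of $\{|\lambda|>R\}\setminus\Gamma_k$.

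First I would differentiate \eqref{asympzk}. The $\OO$-terms there are in fact convergent Puiseux series: substituting $z=w\lambda^{-1/q}$, resp.\ $z=w\lambda^{1/p}$, into $z^q(a(z)-\lambda)$ and applying the implicit function theorem shows that $z_j(\lambda)=c_j\lambda^{-1/q}f_j(\lambda^{-1/q})$ for $j=1,\dots,q$ and $z_j(\lambda)=c_j\lambda^{1/p}g_j(\lambda^{-1/p})$ for $j=q+1,\dots,p+q$, with $f_j,g_j$ analytic near $0$ and equal to $1$ there. Such series may be differentiated term by term, so that
\[
    \frac{z_j'(\lambda)}{z_j(\lambda)}=\begin{cases}
        -\dfrac1q\,\lambda^{-1}+\OO(\lambda^{-1-1/q}), & j=1,\dots,q,\\[10pt]
        \dfrac1p\,\lambda^{-1}+\OO(\lambda^{-1-1/p}), & j=q+1,\dots,p+q,
    \end{cases}
\]
as $\lambda\to\infty$.

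Assembling the cases: for $k=-q+1,\dots,-1$ all $q+k$ factors of $w_k$ are among the $q$ roots tending to $0$, and summing the first line gives $w_k'/w_k=-\tfrac{q+k}{q}\lambda^{-1}+\OO(\lambda^{-1-1/q})$. For $k=0$ the required error $\OO(\lambda^{-2})$ does not follow from summing, so instead I would use that $w_0$, being analytic on $\C\setminus\Gamma_0$ with $\Gamma_0$ compact, is single-valued and analytic for $|\lambda|$ large, with $w_0(\lambda)\to0$ and $w_0(\lambda)\sim\big(\prod_{j=1}^qc_j\big)\lambda^{-1}$; hence its Laurent expansion at infinity is $w_0(\lambda)=c\lambda^{-1}(1+\OO(\lambda^{-1}))$ with $c\neq0$, and differentiating this convergent series yields $w_0'(\lambda)=-c\lambda^{-2}(1+\OO(\lambda^{-1}))$, so $w_0'/w_0=-\lambda^{-1}+\OO(\lambda^{-2})$. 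Finally, for $k=1,\dots,p-1$ I would write $w_k=w_0\prod_{j=q+1}^{q+k}z_j$, whence $w_k'/w_k=w_0'/w_0+\sum_{j=q+1}^{q+k}z_j'/z_j=\big(-\lambda^{-1}+\OO(\lambda^{-2})\big)+k\cdot\tfrac1p\lambda^{-1}+\OO(\lambda^{-1-1/p})=-\tfrac{p-k}{p}\lambda^{-1}+\OO(\lambda^{-1-1/p})$, the term $\OO(\lambda^{-2})$ being absorbed since $p\geq1$.

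The main obstacle is the bookkeeping of remainders: one must know that \eqref{asympzk} can be differentiated with control of the error (i.e.\ that the $\OO$-terms come from convergent expansions, not merely growth bounds), and one must recognise that for $k=0$ the sharper $\OO(\lambda^{-2})$ requires the compactness of $\Gamma_0$ rather than a term-by-term sum. If one wishes to avoid Puiseux expansions, an alternative is to write $z_j(\lambda)=c_j\lambda^{-1/q}+r_j(\lambda)$ with $r_j=\OO(\lambda^{-2/q})$ analytic on the sector and to estimate $r_j'$ by Cauchy's formula over a circle of radius of order $|\lambda|$ inside the sector, which gives $r_j'(\lambda)=\OO(\lambda^{-1-2/q})$ and the same conclusion.
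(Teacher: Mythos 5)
Your proof is correct and fills in the details of what the paper dispatches with the one-line remark ``this follows directly from \eqref{asympzk} and \eqref{defwk}.'' The core observation you isolate is genuinely needed and worth noting: a naive term-by-term summation of the errors in \eqref{asympzk} over $j=1,\dots,q+k$ gives $\OO(\lambda^{-1-1/q})$ for $k=0$ and, when $q>p$, only $\OO(\lambda^{-1-1/q})$ for $k>0$, both of which are weaker than what the proposition claims. Your fix via the compactness of $\Gamma_0$ (so that $w_0$ is single-valued near $\infty$ with a Laurent expansion, yielding $w_0'/w_0=-\lambda^{-1}+\OO(\lambda^{-2})$, which automatically captures the cancellation of the fractional-power terms among the $q$ cyclically permuted small branches), followed by the split $w_k=w_0\prod_{j=q+1}^{q+k}z_j$ for $k>0$, is exactly the right way to obtain the stated error bounds. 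The remark on differentiability of the Puiseux expansions (either via their convergence, established by the implicit function theorem, or via a Cauchy-estimate on circles of radius $\asymp|\lambda|$ inside a sector) correctly addresses the one point where one cannot simply ``differentiate an $\OO$.'' I find no gap.
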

\begin{proof}
  This follows directly from \eqref{asympzk} and \eqref{defwk}.
\end{proof}

\section{Proof of Theorem \ref{theorem1}} \label{section4}

We use the function $w_k$ introduced in \eqref{defwk}.  We define
$\mu_k$ by the formula \eqref{maat k} and we note that
\begin{equation} \label{defmuk}
    {\rm d} \mu_k(\lambda) = \frac{1}{2\pi {\rm i}} \left(\frac{w_{k+}'(\lambda)}{w_{k+}(\lambda)} -
    \frac{w_{k-}'(\lambda)}{w_{k-}(\lambda)}\right) {\rm d} \lambda.
    \end{equation}

\begin{proposition} \label{prop7}
For each $k=-q+1, \ldots, p-1$, we have that
$\mu_k$ is a measure on $\Gamma_k$ with total mass $\mu_k(\Gamma_k) = (q+k)/q$ if $k \geq 0$,
and $\mu_k(\Gamma_k) = (p-k)/p$ if $k \geq 0$.
\end{proposition}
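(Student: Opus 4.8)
The plan is to prove that each $\mu_k$ is a positive measure by working arc by arc on $\Gamma_k$ with a maximum‑principle argument, and then to read off its total mass from the behaviour of $w_k'/w_k$ at infinity. I would fix an open analytic arc $A\subset\Gamma_k$ with an orientation and let $U_\pm$ be one‑sided neighbourhoods of $A$, the $+$‑side on the left. By Proposition~\ref{analytic-continuation} the analytic continuation across $A$ interchanges $z_{q+k}$ and $z_{q+k+1}$ and fixes the other solutions $z_j$; hence in $g:=w_k'/w_k=\sum_{j=1}^{q+k}z_j'/z_j$ only the $j=q+k$ term jumps across $A$, and on $A$ one has $d\mu_k=\frac1{2\pi {\rm i}}\big((z_{q+k}'/z_{q+k})_+-(z_{q+k}'/z_{q+k})_-\big)\,d\lambda$, in agreement with \eqref{maat k}. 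Next I would introduce $h$ on $U_+\cup A\cup U_-$ by $h=z_{q+k+1}/z_{q+k}$ on $U_+$ and $h=z_{q+k}/z_{q+k+1}$ on $U_-$; by the continuation rule these two definitions glue to an analytic function on a neighbourhood of $A$, nonvanishing (no $z_j(\lambda)$ equals $0$, since $0$ is not a root of $z^q(a(z)-\lambda)$, a polynomial with constant term $a_{-q}\neq0$) and nonconstant. Since $|z_{q+k}|<|z_{q+k+1}|$ off $\Gamma_k$ and $|z_{q+k}|=|z_{q+k+1}|$ on $A$, we get $|h|>1$ on $U_+$, $|h|<1$ on $U_-$, and $|h|=1$ on $A$. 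Using $(z_{q+k})_+=(z_{q+k+1})_-$ on $A$, a short computation gives $(z_{q+k}'/z_{q+k})_+-(z_{q+k}'/z_{q+k})_-=-h'/h$ there, so that along $A$
$$ d\mu_k=-\frac1{2\pi}\,d(\arg h).$$

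To fix the sign of $d(\arg h)$ I would first observe that $h'\neq0$ on $A$: if $h'(\lambda_0)=0$ for some $\lambda_0\in A$, then the harmonic function $\log|h|$ has a critical point at $\lambda_0$, so its zero set near $\lambda_0$ is a union of at least two analytic arcs crossing there, whereas near the non‑exceptional point $\lambda_0$ that zero set is $\Gamma_k$, a single arc — a contradiction. Hence $\log h$ is a local biholomorphism near each point of $A$, carrying $A$ into ${\rm i}\R$ and the side $U_+$, where $\log|h|>0$, into $\{\mathrm{Re}>0\}$; since biholomorphisms preserve orientation, $\arg h=\mathrm{Im}\log h$ must be strictly decreasing along $A$ in the chosen orientation, so $d\mu_k>0$ on $A$. (Equivalently one may invoke Hopf's boundary lemma together with the Cauchy--Riemann equations.) This gives $\mu_k\ge0$ on every arc of $\Gamma_k$, and $\mu_k$ has no mass at the exceptional points, because by Proposition~\ref{prop6} its density with respect to arclength is $\OO((\lambda-\lambda_0)^{-m/(m+1)})$ near a branch point and bounded near the remaining exceptional points, hence locally integrable. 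Thus $\mu_k$ is a positive measure on $\Gamma_k$.

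For the total mass, recall that $g=w_k'/w_k$ is analytic on $\C\setminus\Gamma_k$, continues analytically across every open arc of $\Gamma_k$, has only locally integrable singularities at the finitely many exceptional points (Proposition~\ref{prop6}), and tends to $0$ at infinity (Proposition~\ref{prop5}). Applying Cauchy's theorem to $g$ on $\{|\lambda|<R\}\setminus\Gamma_k$ (excising arbitrarily small discs around the exceptional points, which contribute negligibly by integrability) yields $\int_{\Gamma_k\cap\{|\lambda|<R\}}(g_+-g_-)\,d\lambda=-\oint_{|\lambda|=R}g\,d\lambda$ up to negligible corrections near the exceptional points and the finitely many points where $\Gamma_k$ meets $|\lambda|=R$; the left side equals $2\pi{\rm i}\,\mu_k(\Gamma_k\cap\{|\lambda|<R\})$. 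Letting $R\to\infty$ and using $g(\lambda)=-c\,\lambda^{-1}+\OO(\lambda^{-1-\delta})$ from Proposition~\ref{prop5}, with $c=(q+k)/q$ for $k\le0$ and $c=(p-k)/p$ for $k\ge0$ (and $\delta=1/q$ or $1/p$), we get $\oint_{|\lambda|=R}g\,d\lambda\to-2\pi{\rm i}\,c$, hence $\mu_k(\Gamma_k)=c$, which is the asserted value. (Positivity of $\mu_k$ makes this passage to the limit routine and shows en route that $\mu_k$ has finite mass; alternatively the same identity reads $g(\lambda)=\int_{\Gamma_k}(s-\lambda)^{-1}\,d\mu_k(s)$, whose expansion at infinity gives $\mu_k(\Gamma_k)=c$ at once.)

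The hard part is the positivity, and within it pinning down the correct sign: one must identify $d\mu_k$ on an arc with $-\frac1{2\pi}\,d(\arg h)$ for the right auxiliary function $h$ and then extract the monotonicity of $\arg h$, keeping all orientation conventions consistent throughout. Granting Propositions~\ref{prop5} and~\ref{prop6}, the contour bookkeeping for the curve $\Gamma_k$ (which is unbounded when $k\neq0$) and the handling of the exceptional points are routine.
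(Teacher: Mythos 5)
Your overall strategy — prove positivity arc by arc via a maximum‑principle/Hopf‑type argument, then compute the total mass by deforming to a large circle and reading off the $\lambda^{-1}$ coefficient of $w_k'/w_k$ — is exactly the paper's strategy, and the total‑mass computation, the finiteness via Propositions~\ref{prop5} and~\ref{prop6}, and the orientation bookkeeping in the sign argument are all sound.

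However, there is a genuine gap in the positivity step. You assert that, by Proposition~\ref{analytic-continuation}, the analytic continuation across an arc $A\subset\Gamma_k$ swaps $z_{q+k}$ and $z_{q+k+1}$ and fixes the remaining $z_j$, so that only the $j=q+k$ term in \eqref{maat k} contributes and $h=z_{q+k+1}/z_{q+k}$ (on $U_+$) glues to an analytic function across $A$. That is only the special case $k_1=k_2=q+k$ of Proposition~\ref{analytic-continuation}. In general an arc $A$ may lie in several contours simultaneously (see Example 2, where $(-9/4,0)\subset\Gamma_{-1}\cap\Gamma_0\cap\Gamma_1$), and then the monodromy is the full block reversal $z_m\mapsto z_{k_1+k_2-m+1}$ for $m=k_1,\ldots,k_2+1$. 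For instance, if $A\subset\Gamma_{k-1}\cap\Gamma_k$ with $A\cap(\Gamma_{k-2}\cup\Gamma_{k+1})=\emptyset$, then $z_{q+k}$ continues to itself across $A$, so your proposed density $\frac1{2\pi\mathrm i}\bigl((z_{q+k}'/z_{q+k})_+-(z_{q+k}'/z_{q+k})_-\bigr)$ vanishes identically on $A$ even though $\mu_k$ does have mass there; the actual jump comes from $z_{q+k-1}$. Correspondingly, your $h$ is not analytic across $A$: the continuation of $z_{q+k+1}\vert_{U_+}$ is $z_{q+k-1}\vert_{U_-}$, not $z_{q+k}\vert_{U_-}$. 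The paper sidesteps this entirely by working with $w_k$ itself: $\hat w_k$ is defined as the analytic continuation of $w_k$ from $U_-$ (which exists whatever the permutation is), and the crucial inequality $|w_k|<|\hat w_k|$ on $U_+$ holds because $\hat w_k$ is, on $U_+$, the product of $q+k$ of the $z_j$'s among which at least one has index $>q+k$. So the correct auxiliary function is $h=\hat w_k/w_k$, which coincides with your $z_{q+k+1}/z_{q+k}$ only in the simple‑swap case. If you replace your $h$ by this one (or equivalently argue with $w_k$ directly as the paper does), the rest of your proof goes through.
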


\begin{proof}
We first show that $\mu_k$ is a measure, i.e., that it is non-negative on each
analytic arc of $\Gamma_k$.
Let $A$ be an analytic arc in $\Gamma_k$ consisting only of regular points.
Let $t \mapsto \lambda(t)$ be a parametrization of $A$ in the direction of
the orientation of $\Gamma_k$. Then
\begin{align*}
    {\rm  d} \mu_k(\lambda) & =
    \frac{1}{2\pi {\rm i}} \left(\frac{w_{k+}'(\lambda(t))}{w_{k+}(\lambda(t))} -
      \frac{w_{k-}'(\lambda(t))}{w_{k-}(\lambda(t))}\right) \lambda'(t) {\rm d} t \\
      & = \frac{1}{2\pi {\rm i}}
        \left(\frac{\rm d}{{\rm d}t} \log \frac{w_{k+}(\lambda(t))}{w_{k-}(\lambda(t))} \right) {\rm d}t.
        \end{align*}
To conclude that $\mu_k$ is non-negative on $A$, it is thus enough to show that
\begin{equation} \label{logderRe}
    \text{Re} \log  \frac{w_{k+}(\lambda)}{w_{k-}(\lambda)}  = 0, \qquad
    \textrm{ for } \lambda \in A,
    \end{equation}
and
\begin{equation} \label{logderIm}
    \text{Im} \log \frac{w_{k+}(\lambda)}{w_{k-}(\lambda)} \quad \textrm{ increases along } A.
\end{equation}
Since $|w_{k+}(\lambda)| = |w_{k-}(\lambda)|$ for $\lambda \in A$, we have \eqref{logderRe}
so that it only remains to prove \eqref{logderIm}.

There is a neighborhood $U$ of $A$ such that $U \setminus \Gamma_k$
has two components, denoted $U_+$ and $U_-$, where $U_+$ is on the
$+$-side of $\Gamma_k$ and $U_-$ on the $-$-side. It follows from
Proposition \ref{analytic-continuation} that $w_k$ has an analytic
continuation from $U_-$ to $U$, which we denote by $\hat{w}_k$, and
that $|w_k(\lambda)| < |\hat{w}_k(\lambda)|$ for $\lambda \in U_+$,
and equality $|w_{k+}(\lambda)| = |\hat{w}_k(\lambda)|$ holds for
$\lambda \in A$. Thus it follows that
\[ \frac{\partial}{\partial n} \text{Re} \log \left(\frac{w_{k}(\lambda)}{\hat{w}_k(\lambda)} \right) \leq 0,
    \qquad \textrm{ for } \lambda \in A, \]
where $\frac{\partial}{\partial n}$ denotes the normal derivative to $A$ in the direction of $U_+$.
Then by the Cauchy-Riemann equations we have that
$\text{Im} \log \left( \frac{w_{k+}(\lambda)}{\hat{w}_{k+}(\lambda)} \right)$ is
increasing along $A$. Since $\hat{w}_{k+}(\lambda) = w_{k-}(\lambda)$ for $\lambda \in A$,
we obtain \eqref{logderIm}. Thus $\mu_k$ is a measure.

Next we show that $\mu_k$ is a finite measure, which means that we have to
show that
\begin{equation} \label{density}
    \frac{w_{k+}'(\lambda)}{w_{k+}(\lambda)} -
    \frac{w_{k-}'(\lambda)}{w_{k-}(\lambda)}
\end{equation}
is integrable near infinity on $\Gamma_k$ and near every branch
point on $\Gamma_k$. This follows from Propositions \ref{prop5} and
\ref{prop6}. Indeed, from Proposition \ref{prop5} it follows that
\begin{equation} \label{Oatinfinity}
    \frac{w_{k+}'(\lambda)}{w_{k+}(\lambda)} -
    \frac{w_{k-}'(\lambda)}{w_{k-}(\lambda)} = \OO\left(\lambda^{-1 - \delta}\right)
        \qquad \textrm{ as } \lambda \to \infty, \ \lambda \in \Gamma_k.
\end{equation}
where $\delta = 1/q$ if $k < 0$ and $\delta = 1/p$ if $k > 0$. Since $\delta > 0$
we see that \eqref{density} is integrable near infinity.
For a branch point $\lambda_0$ of $\Gamma_k$, we have
from Proposition \ref{prop6} that there exist an $m \geq 1$ such that
\begin{equation} \label{Onearbranch}
    \frac{w_{k+}'(\lambda)}{w_{k+}(\lambda)} -
    \frac{w_{k-}'(\lambda)}{w_{k-}(\lambda)} = \OO\left((\lambda - \lambda_0)^{-m/(m+1)}\right)
        \qquad \textrm{ as } \lambda \to \lambda_0, \ \lambda \in \Gamma_k.
\end{equation}
This shows that \eqref{density} is integrable near every branch point.
Thus $\mu_k$ is a finite measure.

\begin{figure}[t]
\centering
\includegraphics[scale=0.6]{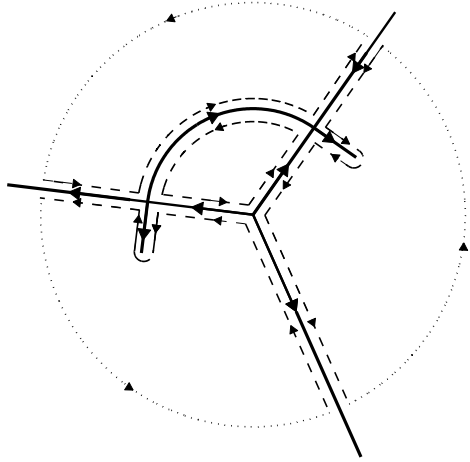}
\caption{Illustration for the proofs of Propositions \ref{prop7} and
\ref{prop8}.
The solid line is a sketch of a possible contour $\Gamma_k$. The dashed
line is the contour $\tilde \Gamma_{k,R}$ and the dotted line is the boundary
of a disk of radius $R$ around $0$.
} \label{Deforming the contour}
\end{figure}

Finally we compute the total mass of $\mu_k$. Let $D(0,R) = \{ z \in \mathbb C \mid |z| < R\}$.
Then for $R$ large enough, so that $D(0,R)$ contains all exceptional points of
$\Gamma_k$ and all connected components of $\mathbb C \setminus \Gamma_k$ (if any),
\begin{equation} \label{Gammakintegral}
    \mu_k(\Gamma_k \cap D(0,R))  =
    \frac{1}{2\pi {\rm i}} \left(\int_{\Gamma_k \cap D(0,R)}
    \frac{w_{k+}'(\lambda)}{w_{k+}(\lambda)} {\rm d}\lambda -
    \int_{\Gamma_k \cap D(0,R)}
    \frac{w_{k-}'(\lambda)}{w_{k-}(\lambda)} {\rm d}\lambda \right)
    \end{equation}
where we have used the behavior \eqref{Onearbranch} near the branch
points in order to be able to split the integrals. Again using
\eqref{Onearbranch} we can then turn the two integrals into a
contour integral over a contour $\tilde \Gamma_{k,R}$ as in
Figure \ref{Deforming the contour}. The contour $\tilde \Gamma_{k,R}$ passes
along the $\pm$-sides of $\Gamma_k \cap D(0,R)$ and if we choose
the orientation that is also shown in Figure \ref{Deforming the contour}
(and which is independent of the choice of orientation for $\Gamma_k$),
then
\begin{equation} \label{GammakRintegral}
    \mu_k(\Gamma_k \cap D(0,R))  =
    \frac{1}{2\pi {\rm i}} \int_{\tilde \Gamma_{k,R}}
        \frac{w_k'(\lambda)}{w_k(\lambda)} {\rm d}\lambda.
\end{equation}
The parts of $\tilde \Gamma_{k,R}$ that belong to bounded components
of $\mathbb C \setminus \Gamma_k$ form closed contours along the
boundary of each bounded component. By Cauchy's
theorem their contribution to the integral \eqref{GammakRintegral}
vanishes.
The parts of $\tilde \Gamma_{k,R}$ that belong to the unbounded
components of $\mathbb C \setminus \Gamma_k$ can be deformed
to the circle $\partial D(0,R)$ with the clockwise orientation.
Thus if we use the positive orientation on $\partial D(0,R)$
as in Figure \ref{Deforming the contour}, then we obtain from \eqref{GammakRintegral}
\[ \mu_k(\Gamma_k \cap D(0,R)) = - \frac{1}{2\pi {\rm i}}
    \oint_{\partial D(0,R)} \frac{w_k'(\lambda)}{w_k(\lambda)} {\rm d}\lambda \]
Letting $R \to \infty$ and using Proposition \ref{prop5}, we then find that
$\mu_k$ is a measure on $\Gamma_k$ with total mass $\mu_k(\Gamma_k)
= (q+k)/q$ if $k \leq 0$, and $\mu_k(\Gamma_k) = (p-k)/p$ if $k \geq 0$.
\end{proof}

The following proposition is the next step in showing
that the measures $\mu_k$ from \eqref{maat k} satisfy the
equations \eqref{EulerL-1}.

\begin{proposition} \label{prop8}
For $k=-q+1,\ldots,p-1$, we have that
\begin{equation} \label{Cauchytransform}
    \int \frac{{\rm d} \mu_k(x)}{x-\lambda} = \frac{w_k'(\lambda)}{w_k(\lambda)},
        \qquad \textrm{ for } \lambda \in \mathbb C \setminus \Gamma_k,
\end{equation}
and
  \begin{equation} \label{alphak-prop}
    \int \log |\lambda-x| \ {\rm d} \mu_k(x)=
    - \log |w_k(\lambda)| + \alpha_k, \qquad \textrm{ for } \lambda \in \mathbb C,
  \end{equation}
where $\alpha_k$ is the constant
\begin{equation} \label{alphak-def}
    \alpha_k=\left\{\begin{array}{ll}
     \log|a_{-q}| + \frac{k}{q} \log |a_{-q}|, & \quad \textrm{if } k \leq 0, \\[5pt]
     \log|a_{-q}|- \frac{k}{p}\log|a_{p}|, & \quad \textrm{if } k \geq 0.
    \end{array}\right.
  \end{equation}
\end{proposition}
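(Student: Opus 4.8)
The plan is to prove the Cauchy transform identity \eqref{Cauchytransform} first, and then obtain the logarithmic potential identity \eqref{alphak-prop} by integration, pinning down the constant $\alpha_k$ via the behavior at infinity. For the first identity, recall from \eqref{defmuk} that ${\rm d}\mu_k(\lambda) = \frac{1}{2\pi {\rm i}}\bigl(\frac{w_{k+}'}{w_{k+}} - \frac{w_{k-}'}{w_{k-}}\bigr){\rm d}\lambda$, so the Cauchy transform $\int \frac{{\rm d}\mu_k(x)}{x-\lambda}$ is, up to the factor $\frac{1}{2\pi {\rm i}}$, the difference of boundary-value integrals of $\frac{1}{x-\lambda}\cdot\frac{w_k'(x)}{w_k(x)}$ along the two sides of $\Gamma_k$. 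Exactly as in the mass computation in the proof of Proposition \ref{prop7}, I would use the estimates \eqref{Oatinfinity}–\eqref{Onearbranch} near branch points and near infinity to deform this into a single contour integral over $\tilde\Gamma_{k,R}$ surrounding $\Gamma_k$ (together with the boundary circle $\partial D(0,R)$ handling the unbounded components), and then let $R\to\infty$. For a fixed $\lambda\in\mathbb C\setminus\Gamma_k$, the function $x\mapsto \frac{1}{x-\lambda}\cdot\frac{w_k'(x)}{w_k(x)}$ is meromorphic on $\mathbb C\setminus\Gamma_k$ with a simple pole at $x=\lambda$ of residue $\frac{w_k'(\lambda)}{w_k(\lambda)}$; contracting the contour onto the pole (and checking, via Proposition \ref{prop5}, that the circle-at-infinity contribution is $O(R^{-1})\to 0$, since $\frac{w_k'}{w_k}=O(x^{-1})$ and the extra $1/(x-\lambda)$ gives decay $O(x^{-2})$) yields precisely \eqref{Cauchytransform}.

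For the second identity, I would integrate. Fix any base point, say let $\lambda$ vary in the unbounded component of $\mathbb C\setminus\Gamma_k$. Both sides of \eqref{alphak-prop} are harmonic functions of $\lambda$ there: the left side is the logarithmic potential $U^{\mu_k}(\lambda) := \int\log|\lambda-x|\,{\rm d}\mu_k(x)$, which is harmonic off $\supp\mu_k\subset\Gamma_k$, and the right side $-\log|w_k(\lambda)|+\alpha_k$ is harmonic since $w_k$ is analytic and nonvanishing on $\mathbb C\setminus\Gamma_k$ by Proposition \ref{analyticity wk}. Their gradients agree: indeed $\frac{\partial}{\partial\lambda}U^{\mu_k}(\lambda) = \frac12\int\frac{{\rm d}\mu_k(x)}{\lambda-x} = -\frac12\frac{w_k'(\lambda)}{w_k(\lambda)} = \frac{\partial}{\partial\lambda}\bigl(-\log w_k(\lambda)\bigr)$ by \eqref{Cauchytransform}, so the two harmonic functions differ by a real constant on each connected component of $\mathbb C\setminus\Gamma_k$. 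A continuity argument across the analytic arcs of $\Gamma_k$ — both potentials being continuous there, using that $\mu_k$ has an integrable density so $U^{\mu_k}$ is continuous, and that $|w_{k+}|=|w_{k-}|$ on $\Gamma_k$ — shows the constant is the same on all components; call it $\alpha_k$. Then \eqref{alphak-prop} holds on all of $\mathbb C$ by continuity.

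It remains to evaluate $\alpha_k$, which I would do by comparing the two sides as $\lambda\to\infty$. On the left, since $\mu_k(\Gamma_k)=(q+k)/q$ for $k\le 0$ and $(p-k)/p$ for $k\ge 0$ by Proposition \ref{prop7}, we have $U^{\mu_k}(\lambda) = \mu_k(\Gamma_k)\log|\lambda| + o(1)$ as $\lambda\to\infty$. On the right, I need the leading asymptotics of $|w_k(\lambda)|$ as $\lambda\to\infty$; from \eqref{asympzk} and the definition \eqref{defwk}, $w_k(\lambda)=\prod_{j=1}^{q+k}z_j(\lambda)$. For $k\le 0$ all $q+k$ factors are among $z_1,\dots,z_q$, each $\sim c_j\lambda^{-1/q}$ with $\prod_{j=1}^q c_j$ a determination of $\pm a_{-q}$ (since $c^q=a_{-q}$), giving $|w_k(\lambda)|\sim |\lambda|^{-(q+k)/q}\cdot(\text{const})$; for $k\ge 0$ one has the $q$ small roots times $k$ of the large roots, $\sim \lambda^{-1}\cdot\lambda^{k/p}\cdot(\text{const})$ using $c^p=a_p^{-1}$. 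Matching the constant terms — i.e., $-\log(\text{const}) = \alpha_k$ — produces exactly the formula \eqref{alphak-def}; here the $\log|a_{-q}|$ term comes from $\prod_{j=1}^q z_j(\lambda)\sim a_{-q}^{-1}\lambda^{-1}$ (the product of all small roots), which is the Vieta relation reading off the constant and next coefficient of $a(z)-\lambda$. The main obstacle is the bookkeeping in this last step: one must track the $O(\lambda^{-1/q})$ versus $O(\lambda^{-1/p})$ correction terms carefully enough to be sure no logarithmic contribution to $\alpha_k$ is lost, and one must correctly identify which product of the constants $c_j$ appears — this is a Vieta/resultant computation on $a(z)-\lambda=a_p z^{-q}\prod_{j=1}^{p+q}(z^{q+1}\cdot\ldots)$ type factorization, which requires care but is routine.
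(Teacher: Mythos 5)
Your overall approach mirrors the paper's: \eqref{Cauchytransform} by the same contour deformation used in Proposition~\ref{prop7} with a residue at $x=\lambda$, then \eqref{alphak-prop} by integrating \eqref{Cauchytransform} (your Wirtinger-derivative/harmonic-function phrasing is just a repackaging of the paper's integration over Jordan curves and equating real parts), and finally $\alpha_k$ from the $\lambda\to\infty$ asymptotics using \eqref{asympzk} and the total mass. That structure is sound.

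However, the one computation you state explicitly in the last step is wrong: you claim
\[
\prod_{j=1}^{q} z_j(\lambda) \sim a_{-q}^{-1}\lambda^{-1},
\]
but in fact
\[
\prod_{j=1}^{q} z_j(\lambda) \sim (-1)^{q-1}\,a_{-q}\,\lambda^{-1},
\qquad\text{so}\qquad
\Bigl|\prod_{j=1}^{q} z_j(\lambda)\Bigr| \sim |a_{-q}|\,|\lambda|^{-1}.
\]
To see this: from \eqref{asympzk}, each small root satisfies $|z_j(\lambda)|=|a_{-q}|^{1/q}|\lambda|^{-1/q}(1+o(1))$, so the modulus of the product of all $q$ of them is $|a_{-q}|\,|\lambda|^{-1}$, not $|a_{-q}|^{-1}|\lambda|^{-1}$. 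Equivalently, the full Vieta product $\prod_{j=1}^{p+q}z_j(\lambda)=(-1)^{p+q}a_{-q}/a_p$, the large roots contribute $\prod_{j>q}z_j(\lambda)\sim (-1)^{p-1}a_p^{-1}\lambda$, and dividing gives $\prod_{j\leq q}z_j(\lambda)\sim(-1)^{q-1}a_{-q}\lambda^{-1}$. If you carried your stated formula through, you would get $\alpha_0=-\log|a_{-q}|$ rather than $\alpha_0=\log|a_{-q}|$, so the sign of the $\log|a_{-q}|$ contribution flips. With this corrected, the bookkeeping does produce \eqref{alphak-def} exactly as the paper does (cf.\ \eqref{logwk}--\eqref{logmuk}); also note that for $k<0$ it is $\prod_{j=1}^{q+k}|c_j|=|a_{-q}|^{(q+k)/q}$ that enters, not the full product over $j\leq q$ as your phrasing suggests.
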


\begin{proof}
To prove \eqref{Cauchytransform}, we follow the same arguments as in
the calculation of $\mu_k(\Gamma_k)$ in the end of the proof of Proposition
\ref{prop7}. Let $\lambda\in \C \setminus \Gamma_k$, and choose
$R> 0$ as in the proof of Proposition \ref{prop7}.
We may assume $R > |\lambda|$.
Then similar to \eqref{Gammakintegral} and \eqref{GammakRintegral}
we can write
\[ \int_{\Gamma_k \cap D(0,R)} \frac{{\rm d} \mu_k(x)}{x-\lambda}
    = \frac{1}{2\pi {\rm i}} \int_{\tilde \Gamma_{k,R}}
        \frac{w_k'(x)}{w_k(x)(x-\lambda)} {\rm d}x \]
where $\tilde \Gamma_{k,R}$ has the same meaning as in the
proof of Proposition \ref{prop7}, see also Figure \ref{Deforming the contour}.
As in the proof of Proposition \ref{prop7} we deform to
an integral over $\partial D(0,R)$, but now we have to take into account that the
integrand has a pole at $x=\lambda$ with residue
$w_k'(\lambda)/w_k(\lambda)$. Therefore, by Cauchy's theorem
\begin{align}
    \int_{\Gamma_k \cap D(0,R)}\frac{{\rm d} \mu_k(x)}{x-\lambda}
      &= \frac{w_k'(\lambda)}{w_k(\lambda)} -
      \frac{1}{2\pi {\rm i}} \int_{\partial D(0,R)}  \frac{w_{k}'(x)}{w_{k}(x)(x-\lambda)}
      {\rm d}x.
  \end{align}
Letting $R\to \infty$ and using
  Proposition \ref{prop5} gives \eqref{Cauchytransform}.

Next we integrate \eqref{Cauchytransform} over a Jordan curve $J$ in
$\mathbb C \setminus \Gamma_k$ from $\lambda_1$ to $\lambda_2$.
 \begin{align} \nonumber
   \int_{\lambda_1}^{\lambda_2} \int_{\Gamma_k}& \frac{1}{x-\lambda}  \ {\rm d}\mu_k(x)\
 {\rm d}\lambda
    =- \int \int_{\lambda_1}^{\lambda_2 } \frac{1}{x-\lambda} \ {\rm d}\lambda \ {\rm d}\mu_k(x)\\
    &=\int \left(  \log|\lambda_1-x|-\log|\lambda_2-x| + {\rm i} \Delta_J [\arg(\lambda-x)]
    \right) {\rm d}\mu_k(x),
    \label{integralJ1}
 \end{align}
 where $\Delta_J [\arg(\lambda-x)]$ denotes the change in argument of $\lambda-x$
 as when $\lambda$ varies over $J$ from $\lambda_1$ to $\lambda_2$.
By \eqref{Cauchytransform} the integral \eqref{integralJ1} is equal to
\begin{align} \label{integralJ2}
  \int_{\lambda_1}^{\lambda_2}  \frac{w_k'(\lambda)}{w_k(\lambda)} {\rm d} \lambda
  & = \log| w_k(\lambda_2)| - \log |w_k(\lambda_1)| +
    {\rm i} \Delta_J [\arg w_k(\lambda)].
\end{align}
Equating the real parts of \eqref{integralJ1} and \eqref{integralJ2} we get
\begin{equation}
    \int \left( \log|\lambda_1-x|-\log|\lambda_2-x| \right) {\rm d}\mu_k(x)
    = -\log|w_k(\lambda_1)| + \log |w_k(\lambda_2)|.
\end{equation}
Since $\lambda_1$ and $\lambda_2$ can be taken arbitrarily in a connected
component of $\mathbb C \setminus \Gamma_k$, we find that there
exists a constant $\alpha_k \in \mathbb R$ (which a priori could depend on the
connected component) such that
\begin{equation} \label{link log+alpha_k}
  \int  \log|\lambda-x | \ {\rm  d}\mu_k(x)
  = -\log|w_k(\lambda)| +\alpha_k,
\end{equation}
for all $\lambda$ in a connected component of $\mathbb C \setminus \Gamma_k$. By
continuity the equation \eqref{link log+alpha_k} extends to the closure of
the connected component, which shows that the same constant $\alpha_k$
is  valid for all connected components. Thus \eqref{link log+alpha_k} holds
for all $\lambda \in \mathbb C$.

The exact value of $\alpha_k$ can then
be determined by expanding \eqref{link log+alpha_k} for large $\lambda$.
Suppose for example that $k < 0$. Then by \eqref{asympzk} and \eqref{defwk}
\[ |w_k(\lambda)| = \prod_{j=1}^{q+k} |z_j(\lambda)|
    = |a_{-q}|^{(q+k)/q} |\lambda|^{-(q+k)/q} \left( 1 + \OO(\lambda^{-1/q})\right)
\]
as $\lambda \to \infty$. Thus
\begin{equation} \label{logwk}
    - \log |w_k(\lambda)| = \frac{q+k}{q} \log |\lambda| - \frac{q+k}{q} \log |a_{-q}|
    + \OO(\lambda^{-1/q}). \end{equation}
Since
\begin{equation} \label{logmuk}
    \int \log |\lambda - x| \ {\rm d} \mu_k(x)
    = \log |\lambda| \mu_k(\Gamma_k) + o(1)
    = \frac{q+k}{q} \log |\lambda| + o(1),
    \end{equation}
as $\lambda\to \infty$, the value \eqref{alphak-def} for $\alpha_k$
follows from \eqref{link log+alpha_k}, \eqref{logwk}, and
\eqref{logmuk}. The argument for $k > 0$ is similar. This completes
the proof of the proposition.
\end{proof}

To prove part (c) of Theorem \ref{theorem1} we
also need the following lemma.

\begin{lemma} \label{lem: nulenergie voor tekenvectormaten}
  Let $\vec \nu_1=(\nu_{1,-q+1}\ldots,\nu_{1,p-1})$ and
  $\vec \nu_2=(\nu_{2,-q+1}\ldots,\nu_{2,p-1})$ be two admissible
  vectors of measures.
  Then $J(\vec \nu_1-\vec \nu_2)$ is well defined and
  \begin{equation} \label{eq: nulenergie voor tekenvectormaten}
    J(\vec \nu_1-\vec \nu_2)\geq 0,
  \end{equation}
  with equality if and only if $\vec \nu_1=\vec \nu_2$.
\end{lemma}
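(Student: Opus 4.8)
The plan is to reduce the statement to the scalar positivity result \eqref{eq:pos-energy} by exploiting the alternative representation \eqref{energyJ-alt} of the energy functional. First I would observe that $J$ is a quadratic form in the sense of \eqref{energyJ-alt2}, so that for any two vectors of measures $\vec\nu_1,\vec\nu_2$ for which all the relevant mutual energies are finite, the polarization identity gives
\[
    J(\vec\nu_1-\vec\nu_2) = \sum_{j,k=-q+1}^{p-1} A_{jk}\, I(\nu_{1,j}-\nu_{2,j},\,\nu_{1,k}-\nu_{2,k}),
\]
and in particular the analogue of \eqref{energyJ-alt} holds with each $\nu_k$ replaced by the signed measure $\nu_{1,k}-\nu_{2,k}$. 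Since $\vec\nu_1$ and $\vec\nu_2$ are both admissible, the differences $\eta_k := \nu_{1,k}-\nu_{2,k}$ satisfy $\eta_k(\C)=0$ for every $k$, because the total masses \eqref{norm-nuk} depend only on $k$ and not on the particular admissible vector. This is the key point that makes the cross terms amenable to \eqref{eq:pos-energy}.

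Next I would write out the right-hand side of \eqref{energyJ-alt} for $\vec\nu = \vec\nu_1-\vec\nu_2$:
\[
    J(\vec\nu_1-\vec\nu_2) = \left(\tfrac1q+\tfrac1p\right) I(\eta_0)
    + \sum_{k=1}^{q-1} k(k+1)\, I\!\left(\tfrac{\eta_{-q+k}}{k}-\tfrac{\eta_{-q+k+1}}{k+1}\right)
    + \sum_{k=1}^{p-1} k(k+1)\, I\!\left(\tfrac{\eta_{p-k}}{k}-\tfrac{\eta_{p-k-1}}{k+1}\right).
\]
Now each term is nonnegative: the measure $\eta_0/(1/q+1/p)$ inside the first term has total mass zero (it is the difference of two probability measures on the compact set $\Gamma_0$), so $I(\eta_0)\ge 0$ by the classical compactly-supported case of \eqref{eq:pos-energy}; and in each summand $I\big(\eta_{-q+k}/k - \eta_{-q+k+1}/(k+1)\big)$ the two signed measures $\eta_{-q+k}/k$ and $\eta_{-q+k+1}/(k+1)$ are differences of measures of equal (possibly zero) total mass, hence $\eta_{-q+k}/k - \eta_{-q+k+1}/(k+1)$ is itself of the form $\rho_1-\rho_2$ with $\rho_1,\rho_2\in\mathcal M_e(c)$ for a common $c$, so Simeonov's result \eqref{eq:pos-energy} applies and the term is $\ge 0$. (One must first check well-definedness: each $\nu_{i,k}\in\mathcal M_e$, and in particular satisfies \eqref{cond: unbounded support} and has finite energy, so all the mutual energies $I(\nu_{i,j},\nu_{i',k})$ entering $J(\vec\nu_1-\vec\nu_2)$ are finite and the rearrangement into \eqref{energyJ-alt} is legitimate.) Summing, $J(\vec\nu_1-\vec\nu_2)\ge 0$.

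For the equality case, suppose $J(\vec\nu_1-\vec\nu_2)=0$. Then every term above vanishes. From the first term and the equality case of the compactly-supported version of \eqref{eq:pos-energy} we get $\eta_0=0$, i.e.\ $\nu_{1,0}=\nu_{2,0}$. From the $q-1$ terms in the first sum, working outward from $k=1$: vanishing of the $k$th term forces $\eta_{-q+k}/k = \eta_{-q+k+1}/(k+1)$ as signed measures (equality case of \eqref{eq:pos-energy} for measures of common mass), so starting from $\eta_0 = 0$ and using the index $-q+k+1 = 0$ when $k = q-1$ one propagates downward to obtain $\eta_{-1}=\eta_{-2}=\cdots=\eta_{-q+1}=0$; similarly the second sum gives $\eta_1=\cdots=\eta_{p-1}=0$. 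Hence $\vec\nu_1=\vec\nu_2$. The only genuine obstacle is the bookkeeping around well-definedness and the mass-zero conditions for the signed measures appearing inside $I(\cdot)$; once one notes that admissibility pins down all total masses, the positivity and the equality case both follow cleanly from \eqref{eq:pos-energy} term by term. I would remark that the calculation verifying the displayed form of \eqref{energyJ-alt} for signed measures is identical to the one leading to \eqref{energyJ-alt} itself, which the paper has already left to the reader.
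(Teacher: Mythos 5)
Your proof is correct and follows essentially the same route as the paper's: expand $J(\vec\nu_1-\vec\nu_2)$ via the alternative representation \eqref{energyJ-alt} applied to the component differences, observe that the mass normalizations \eqref{norm-nuk} make each summand a value of $I(\rho_1-\rho_2)$ with $\rho_1,\rho_2 \in \mathcal M_e(c)$ for a common $c$ so that \eqref{eq:pos-energy} applies term by term, and in the equality case propagate $\eta_0=0$ outward through the chain of identities $\eta_{-q+k}/k = \eta_{-q+k+1}/(k+1)$ (and its mirror on the positive side). Your spelling out of why each term is amenable to Simeonov's result is a slightly more explicit version of the paper's remark ``Using \eqref{eq:pos-energy} and \eqref{norm-nuk}, we see that all terms are non-negative,'' but it is not a different argument.
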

\begin{proof}
Since both $\vec{\nu}_1$ and $\vec{\nu}_2$ have finite energy, we
find that $J(\vec{\nu}_1-\vec{\nu}_2)$ is well defined. According
to the alternative representation \eqref{energyJ-alt}, we have
\begin{align}
  J(\vec \nu _1 -\vec \nu_2)=
  &\left(\frac{1}{q}+\frac{1}{p}\right)I(\nu_{1,0}-\nu_{2,0})\nonumber\\
  & +\sum_{k=1}^{q-1} k(k+1)
 I\left(  \frac{\nu_{1,-q+k}}{k} - \frac{\nu_{2,-q+k}}{k}
     - \frac{\nu_{1, -q+k+1}}{k+1}  + \frac{\nu_{2,-q+ k+1}}{k+1}
     \right) \nonumber \\
  &+\sum_{k=1}^{p-1} k(k+1) I\left( \frac{\nu_{1,p-k}}{k} - \frac{\nu_{2,p-k}}{k}
     - \frac{\nu_{1, p-k-1}}{k+1} + \frac{\nu_{2,p-k-1}}{k+1} \right).
     \label{Jenergy-difference}
 \end{align}
 Using \eqref{eq:pos-energy} and \eqref{norm-nuk}, we see that all terms in
 \eqref{Jenergy-difference} are non-negative and therefore \eqref{eq: nulenergie voor
 tekenvectormaten} holds.

 Suppose now that $J(\vec \nu_1-\vec \nu_2)=0$. Then all terms in
 the right-hand side of
 \eqref{Jenergy-difference} are zero, so that
 \begin{align} \label{nu0equality}
   \nu_{1,0} & = \nu_{2,0}, \\
   \label{nu1equality}
   \frac{\nu_{1,-q+k}}{k}  + \frac{\nu_{2,-q+ k+1}}{k+1}
    & = \frac{\nu_{1, -q+k+1}}{k+1} + \frac{\nu_{2,-q+k}}{k},
    \qquad \textrm{for } k=1, \ldots, q-1, \\
  \label{nu2equality}
    \frac{\nu_{1,p-k}}{k} + \frac{\nu_{2,p-k-1}}{k+1}
   & = \frac{\nu_{1, p-k-1}}{k+1} + \frac{\nu_{2,p-k}}{k},
    \qquad \textrm{ for } k = 1,\ldots, p-1.
  \end{align}
Using \eqref{nu0equality} in \eqref{nu1equality} with $k=q-1$,
we find $\nu_{1,-1} = \nu_{2,-1}$. Proceeding inductively
we then obtain from \eqref{nu1equality} that $\nu_{1,k} = \nu_{2,k}$
for all $k = -q+1, \ldots, 0$. Similarly, from \eqref{nu0equality} and
\eqref{nu2equality} it follows that $\nu_{1,k} = \nu_{2,k}$ for
$k=0,\ldots, p-1$, so that $\vec \nu_1 = \vec \nu_2$ as claimed.
\end{proof}

Now we are ready for the proof of Theorem \ref{theorem1}.

\textit{Proof of  Theorem \ref{theorem1}.}
(a) In view of Proposition \ref{prop7} it
only remains to show that $\mu_k \in \mathcal M_e$ for every
$k=-q+1, \ldots, p-1$. The decay estimate \eqref{Oatinfinity}
implies that
\[ \int \log(1+|\lambda|) \ {\rm d}\mu_k(\lambda) < \infty. \]
The fact that $I(\mu_k) < +\infty$ follows from \eqref{alphak-prop}.
Indeed,
\begin{align*}
    I(\mu_k) & = - \iint  \log |\lambda - x| {\rm d}\mu_k(x) {\rm d}\mu_k(\lambda)
     = \int (\log |w_k(\lambda)| - \alpha_k) {\rm d}\mu_k(\lambda)
\end{align*}
and this is finite since $\mu_k$ is a finite measure on $\Gamma_k$
with a density that decays as in \eqref{Oatinfinity} and $\log
|w_k(\lambda)|$ is continuous on $\Gamma_k$ and  grows only as a
constant times $\log |\lambda|$ as $\lambda \to \infty$. Thus $\vec\mu$ is admissible
and part (a) is proved.

(b) According to \eqref{alphak-prop} we have
\begin{align} \nonumber
 & 2 \int  \log|\lambda-x| \ {\rm  d}\mu_k(x)
 - \int \log |\lambda-x| \ {\rm d} \mu_{k+1}(\lambda) - \int \log |\lambda-x| \ {\rm d} \mu_{k-1}(\lambda) \\
 & \nonumber \qquad = - 2\log |w_k(\lambda)| + 2\alpha_k
    + \log |w_{k+1}(\lambda)| - \alpha_{k+1} + \log |w_{k-1}(\lambda)| - \alpha_{k-1} \\
 & \nonumber \qquad = \log \left| \frac{w_{k+1}(\lambda) w_{k-1}(\lambda)}{w_k(\lambda)^2} \right|
    + 2\alpha_k - \alpha_{k+1} - \alpha_{k-1} \\
 & \qquad = \log \left| \frac{z_{q+k+1}(\lambda)}{z_{q+k}(\lambda)} \right|
    + 2\alpha_k - \alpha_{k+1} - \alpha_{k-1}.
    \label{EulerL-2}
  \end{align}
Since $|z_{q+k}(\lambda)| = |z_{q+k+1}(\lambda)|$ for $\lambda \in
\Gamma_k$, we see from \eqref{EulerL-2} that \eqref{EulerL-1} holds
with constant
\begin{equation} \label{deflk}
    l_k = 2\alpha_k-\alpha_{k-1}+\alpha_{k+1}.
\end{equation}
Note that for $k = -q+1$ and $k=p-1$, we are using the convention
that $\mu_{-q} = \mu_p = 0$, and we also have put $\alpha_{-q} =
\alpha_p = 0$. This proves part (b).

(c) Let $\vec{\nu} = (\nu_{-q+1}, \ldots, \nu_{p-1})$ be any
admissible vector of measures.
From the representation \eqref{energyJ-alt2} we get
\begin{align} \nonumber
  J(\vec{\nu}) & = J(\vec \mu + \vec{\nu} - \vec \mu) \\
  & = J(\vec \mu)+ J(\vec \nu-\vec\mu)+ 2\sum_{j,k=-q+1}^{p-1}
  A_{jk} I(\mu_j,\nu_k-\mu_k). \label{Jenergy2}
  \end{align}
Using \eqref{interaction}, we find from \eqref{Jenergy2}
\begin{align}
    J(\vec \nu)
  &=J(\vec \mu)+ J(\vec \nu-\vec\mu)+ \sum_{k=-q+1}^{p-1}
  I(2\mu_k-\mu_{k-1}-\mu_{k+1} ,\nu_k-\mu_k)
  \label{Jenergy3}
  \end{align}
For each $k = -q+1, \ldots, p-1$, we have
\begin{align} \nonumber
    & I(2\mu_k-\mu_{k-1}-\mu_{k+1},\nu_k-\mu_k) \\
    & \qquad = \label{Jenergy4}
    \int \left(
         \int \log |\lambda-x| \ {\rm d} (2 \mu_k- \mu_{k-1} - \mu_{k+1})(x) \right)
         d(\nu_k-\mu_k)(\lambda)
 \end{align}
By \eqref{EulerL-1} the inner integral in the right-hand side of
\eqref{Jenergy4} is constant for $\lambda \in \Gamma_k$. Since
$\nu_k$ and $\mu_k$ are finite measures on $\Gamma_k$ with
$\nu_k(\Gamma_k) = \mu_k(\Gamma_k)$, we find from \eqref{Jenergy4}
that
\[ I(2\mu_k-\mu_{k-1}-\mu_{k+1},\nu_k-\mu_k) = 0,
    \qquad \textrm{ for } k = -q+1, \ldots, p-1. \]
Then  \eqref{Jenergy3} shows that $J(\vec{\nu})=J(\vec \mu)+ J(\vec
\nu-\vec\mu)$, which by Lemma \ref{lem: nulenergie voor
tekenvectormaten} implies that $J(\vec \nu) \geq J(\vec \mu)$ and
equality holds if and only if $\vec \nu = \vec \mu$. This completes the
proof of Theorem~\ref{theorem1}. $\square$

\section{Proofs of Proposition \ref{eigenschappen Pnk} and Theorem \ref{theorem3}}
\label{section5}

\subsection{Proof of Proposition \ref{eigenschappen Pnk}}

We will now prove Proposition \ref{eigenschappen Pnk}, which follows
by a combinatorial argument.

\textit{Proof of Proposition \ref{eigenschappen Pnk}.}  We prove
\eqref{degree Pnk} and \eqref{leadingcoefficient} for $k>0$.
The case $k<0$ is similar. Let us first expand the determinant in
the definition of $P_{k,n}$
\begin{align} \label{Pkn expanded determinant}
    P_{k,n}(\lambda) =
    \det T_n(z^{-k}(a-\lambda))=
    \sum_{\pi \in S_n} \prod_{j=1}^n
    (a-\lambda)_{j-\pi(j)+k}.
\end{align}
Here $S_n$ denotes the set of all permutation on $\{1,\ldots,n\}$.
By the band structure of $T_n(z^{-k}(a-\lambda))$ it follows that we
only have non-zero contributions from permutations $\pi$ that satisfy
\begin{align} \label{condition on permutations}
    k- p \leq \pi(j)-j  \leq q+k,
    \qquad \textrm{ for all } j=1, \ldots, n.
\end{align}
Define for $\pi \in S_n$,
\begin{equation} N_\pi=\{j\ | \ \pi(j)=j+k\}.
\end{equation}
and denote the number of elements of
$N_\pi$ by $|N_\pi|$. For each $\pi \in S_n$ we have that
$\prod_{j=1}^n  (a-\lambda)_{j-\pi(j)+k}$ is a polynomial in $\lambda$ of degree
at most $|N_\pi|$.  So by \eqref{Pkn expanded determinant}
\begin{equation}\label{towards upperbound dkn}
d_{k,n} = \deg P_{k,n} \leq \max_\pi |N_\pi|
\end{equation}  where we maximize over permutations $\pi \in S_n$ satisfying
\eqref{condition on permutations}.

Let $\pi \in S_n$ satisfying \eqref{condition on permutations}. We
prove \eqref{degree Pnk} by giving an upper bound for $|N_\pi|$.
Since $\sum_{j=1}^n (\pi(j)-j)=0$ we obtain
\begin{equation} \label{eq: positivepartsperm}
 \sum_{j=1}^n (\pi(j)-j)_+= \sum_{j=1}^n (j-\pi(j))_+,
\end{equation}
where $(\cdot)_+$ is defined as $(a)_+=\max(0,a)$ for $a\in \R$.
Each  $j\in N_\pi$ gives a contribution $k$ to the left-hand side of
\eqref{eq: positivepartsperm}. Therefore the left-hand side is
at least $k|N_\pi|$. By \eqref{condition on permutations}
we have that each term in the right hand side is at most $p-k$.
Moreover, there are at most $n-|N_\pi|$ non-zero terms in this sum.
Combining this with \eqref{eq: positivepartsperm} leads to
\begin{equation}\label{eq: upperbound Npi1}
  k|N_\pi|\leq \sum_{j=1}^n (\pi(j) - j)_+ =
    \sum_{j=1}^n (j-\pi(j))_+ \leq (n-|N_\pi|)(p-k).
\end{equation}
Hence, if $\pi$ is a permutation satisfying \eqref{condition on permutations}
\begin{equation} \label{eq: upperbound Npi2}
  |N_\pi| \leq \frac{n(p-k)}{p}.
\end{equation}
Now \eqref{degree Pnk} follows by combining \eqref{eq: upperbound
Npi2} and \eqref{towards upperbound dkn}.

To prove \eqref{leadingcoefficient}, we assume that $n\equiv 0
\bmod p$. We claim that there exists a unique $\pi$ such that
equality holds in \eqref{eq: upperbound Npi2}. Then equality
holds in both inequalities of \eqref{eq: upperbound Npi1} and the above arguments show that
this can only happen if
\begin{align} \label{eq: toward equility in n0modp}
  \pi(j)=j+k,\qquad \textrm{ or } \qquad \pi(j)=j-p+k,
\end{align}
for every $j = 1, \ldots, n$. We claim that there exists a unique
such permutation, namely
\begin{equation} \label{permutation highest degree}
  \pi(j)=\begin{cases}
    j+k,& \textrm{ if } j\equiv 1,\ldots, (p-k) \bmod p,\\
    j-p+k,& \textrm{ if } j\equiv (p-k+1), \ldots, p \bmod p.
  \end{cases}
\end{equation}

To see this let  $\pi$ be a permutation satisfying \eqref{eq: toward
equility in n0modp}. The numbers $1,\ldots,p-k$ can not satisfy
$\pi(j)=j-p+k$ and thus satisfy $\pi(j) = j+k$. On the other hand, the
numbers $1,\ldots,k$ can not be the image of numbers $j$ satisfying $\pi(j) = j+k$,
and thus $\pi(j)=j-p+k$ for $j = p-k+1,\ldots,p$. So
\eqref{permutation highest degree} holds for $j = 1, \ldots, p$.
This means in particular that the restriction of $\pi$ to
$\{p+1,\ldots,n\}$ is again a permutation, but now on
$\{p+1,\ldots,n\}$.  By the same arguments we then find that
\eqref{permutation highest degree} holds for
$j = p+1,\ldots,2p$, and so on. The result is that
\eqref{permutation highest degree} is indeed the only permutation
that satisfies \eqref{eq: toward equility in n0modp}.

Finally, a straightforward calculation shows that the coefficient of
$\lambda^{(p-k)n/p}$ in $ \prod_{j=1}^n (a-\lambda)_{j-\pi(j)+k}$
with $\pi$ as in \eqref{permutation highest degree}  is nonzero and
given by \eqref{leadingcoefficient}. This proves the proposition.
 $\square$

\subsection{Proof of Theorem \ref{theorem3}}

Before we start with the proof of Theorem \ref{theorem3} we first
prove the following proposition concerning the asymptotics for
$P_{k,n}$ for $n\to \infty$.
  \begin{proposition} \label{prop9}
  Let $M_k=\{q+k+1,\ldots, p+q\}$.
We have that
 \begin{equation}\label{eq:  Toeplitz asymptotics}
P_{k,n}(\lambda)=(w_{M_k}(\lambda))^n C_{M_k}(\lambda)\left(1+\OO(\exp(-c_Kn)\right),
    \qquad n\to \infty,
\end{equation}
uniformly on compact subsets $K$ of $\mathbb C\setminus
\Gamma_k$. Here $c_K$ is a positive constant depending on $K$.
\end{proposition}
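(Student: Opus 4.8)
The plan is to start from Widom's formula \eqref{eq: Widom in b is zmink a min lambda}, which expresses $P_{k,n}(\lambda)$ as a finite sum $\sum_M C_M(\lambda) (w_M(\lambda))^n$ over subsets $M \subset \{1,\ldots,p+q\}$ with $|M| = p-k$, and to isolate the dominant term. First I would fix a compact set $K \subset \mathbb{C} \setminus \Gamma_k$. By the ordering \eqref{ordering in magnitude} and the definition \eqref{defGammak} of $\Gamma_k$, for $\lambda \notin \Gamma_k$ we have the strict inequality $|z_{q+k}(\lambda)| < |z_{q+k+1}(\lambda)|$. This shows that among all $M$ with $|M| = p-k$, the product $\prod_{j \in M} |z_j(\lambda)|$ is strictly maximized by the unique choice $M = M_k = \{q+k+1, \ldots, p+q\}$, since any other $M$ must omit some index in $\{q+k+1,\ldots,p+q\}$ and include some index in $\{1,\ldots,q+k\}$, each such swap strictly decreasing the product. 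Hence $|w_{M_k}(\lambda)| > |w_M(\lambda)|$ for all $M \neq M_k$.

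Next I would quantify this on $K$. Since $K$ is compact and disjoint from $\Gamma_k$, and the functions $z_j(\lambda)$ are continuous where defined, there is a $\rho = \rho_K \in (0,1)$ such that $|w_M(\lambda)/w_{M_k}(\lambda)| \leq \rho$ for all $M \neq M_k$ and all $\lambda \in K$. Here one must be slightly careful about the finitely many $\lambda$ where the $z_j(\lambda)$ are not mutually distinct (branch points), since Widom's formula is only stated for $\lambda$ with distinct roots; but such points are isolated, $w_{M_k}$ and $C_{M_k}$ extend continuously (indeed $w_{M_k} = (-1)^{p-k} a_p^{-1} w_k$ by the remark after \eqref{defwk}, and $w_k$ is analytic on $\mathbb{C}\setminus\Gamma_k$ by Proposition \ref{analyticity wk}), and one can either exclude small disks around branch points and then fill them in by continuity, or argue that $K$ can be taken to avoid them and extend afterward. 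Also $C_{M_k}(\lambda)$ is bounded and bounded away from $0$ on $K$, while each $C_M(\lambda)$ is bounded on $K$; the coefficients $C_M$ may blow up near branch points but only polynomially, which is dominated by the exponential gap.

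Then the estimate follows by factoring out the dominant term:
\[
  P_{k,n}(\lambda) = (w_{M_k}(\lambda))^n C_{M_k}(\lambda)
    \left( 1 + \sum_{M \neq M_k} \frac{C_M(\lambda)}{C_{M_k}(\lambda)}
      \left( \frac{w_M(\lambda)}{w_{M_k}(\lambda)} \right)^n \right),
\]
and the sum is bounded in absolute value by $(\text{const}) \cdot \rho^n \leq \OO(\exp(-c_K n))$ with $c_K = \log(1/\rho) > 0$, uniformly on $K$, where the constant absorbs the finitely many bounded ratios $C_M/C_{M_k}$. This gives \eqref{eq: Toeplitz asymptotics}.

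The main obstacle I anticipate is the technical handling of branch points: Widom's formula as quoted requires distinct roots $z_j(\lambda)$, and at branch points both $w_{M_k}$ and the $C_M$ need care (the $C_M$ involve differences $z_j - z_l$ in the denominator). The cleanest route is probably to first establish \eqref{eq: Toeplitz asymptotics} on compact subsets of $\mathbb{C} \setminus (\Gamma_k \cup B)$, where $B$ is the finite set of branch points, then observe that both sides of the asserted identity — or rather, the function $P_{k,n}(\lambda)/[(w_{M_k}(\lambda))^n C_{M_k}(\lambda)]$, which equals an analytic function $P_{k,n}$ divided by the analytic non-vanishing function $(w_k(\lambda))^n$ times constants — are analytic across the branch points, so the bound extends there by the maximum principle. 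Everything else is a routine comparison of exponentials against polynomially-growing prefactors.
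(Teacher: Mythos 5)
Your proposal is correct and follows essentially the same route as the paper: rewrite Widom's formula as $P_{k,n} = w_{M_k}^n C_{M_k}(1 + R_{k,n})$, bound $|R_{k,n}|$ exponentially on compact sets avoiding branch points using the strict gap $|z_{q+k}|<|z_{q+k+1}|$ off $\Gamma_k$, and then extend across branch points by the maximum modulus principle applied to the analytic function $R_{k,n}$ (your ``cleanest route''). One small caution: the parenthetical claim that $w_{M_k}=(-1)^{p-k}a_p^{-1}w_k$ is not quite right (the paper's identity relates $w_k$ to $w_{\{1,\dots,q+k\}}$, the \emph{complement} of $M_k$; $w_{M_k}$ is instead a constant times $1/w_k$), and the heuristic that a polynomial blow-up of $C_M/C_{M_k}$ is ``dominated by the exponential gap'' does not give uniformity near a branch point — but you correctly discard both of these in favor of the maximum-principle argument, which is what the paper does.
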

\begin{proof}
First rewrite \eqref{eq: Widom in b is zmink a min lambda} as
\begin{equation}\label{eq: prop:  Toeplitz asymptotics}
    P_{k,n}(\lambda)=(w_{M_k}(\lambda))^n C_{M_k}(\lambda)\left(1+R_{k,n}(\lambda)\right).
\end{equation}
with $R_{k,n}$ defined by \begin{equation}\label{eq: term to estimate}
R_{k,n}(\lambda)=\sum_{M\neq M_k}
\frac{(w_M(\lambda))^nC_M(\lambda)}{(w_{M_k}(\lambda))^nC_{M_k}(\lambda)}.\end{equation} Let $K$ be a
compact subset of $\mathbb C\setminus \Gamma_k$. If $K$ does not
contain branch points then there exists $A,B>0$ such that
\begin{equation}\label{eq: estimates on Cm} A<|C_M(\lambda)|<B\end{equation} for all $\lambda \in K$ and $M$.
Moreover, we have
 \begin{equation} \left|\frac{w_M(\lambda)}{w_{M_k}(\lambda)}\right|\leq
\left|\frac{z_{q+k}(\lambda)}{z_{q+k+1}(\lambda)}\right|\leq
\sup_{\lambda \in K}
\left|\frac{z_{q+k}(\lambda)}{z_{q+k+1}(\lambda)}\right|<1,
\end{equation}
for all $\lambda \in K$ and $M\neq M_k$. Therefore one readily
verifies from \eqref{eq: prop:  Toeplitz asymptotics}
that there exist $c_K$ such that $|R_{k,n}(\lambda)|\leq \exp(-
c_Kn )$  for all $\lambda \in K$ and $n$ large enough. This proves
the statement in case $K$ does not contain branch points.

Suppose that  $K$ does contain branch points.  Without loss of
generality we can assume that all branch points lie in the interior
of $K$ (otherwise we replace $K$ by a  bigger compact set). The
boundary $\partial K$ of $K$ is a compact set with no branch points
and therefore  \eqref{eq: Toeplitz asymptotics} holds for $\partial
K$ by the above arguments. Since $w_{M_k}$ and $C_{M_k}$
are analytic in $K$, we find by \eqref{eq: prop: Toeplitz
asymptotics} that $R_{k,n}$ is analytic in $K$. The maximum modulus
principle for analytic functions states that $\sup_{z\in K}|R_{k,n}(z)|=\sup_{z\in \partial K}
|R_{k,n}(z)|$ and thereby we obtain that  \eqref{eq: Toeplitz
asymptotics} also holds for $K$ with the same constant $c_K=c_{\partial K}$.
\end{proof}

We now state two particular consequences of \eqref{eq:  Toeplitz
asymptotics}.

\begin{corollary}\label{cor1}
Let $k\in \{-q+1,\ldots,p-1\}$. For every compact set $K\subset \C
\setminus \Gamma_k$ we have that  $\mu_{k,n}(K)=0$ for $n$ large
enough.
\end{corollary}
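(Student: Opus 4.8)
The plan is to use Proposition \ref{prop9} directly. Fix a compact set $K \subset \mathbb C \setminus \Gamma_k$. The asymptotic formula \eqref{eq: Toeplitz asymptotics} tells us that on $K$,
\[
    P_{k,n}(\lambda) = (w_{M_k}(\lambda))^n C_{M_k}(\lambda)\left(1 + \OO(\exp(-c_K n))\right),
\]
uniformly for $\lambda \in K$, where $c_K > 0$. The zeros of $P_{k,n}$ are precisely the points of $\spec_k T_n(a)$, and $\mu_{k,n}(K)$ counts how many of them (with multiplicity) land in $K$, divided by $n$. So it suffices to show that for $n$ large enough $P_{k,n}$ has no zeros at all in $K$.

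First I would observe that $w_{M_k}$ is non-vanishing on $\mathbb C \setminus \Gamma_k$: by the remark after \eqref{defwk}, $w_{M_k} = (-1)^{p-k} a_p\, w_k / (\text{product over the complementary indices})$, or more simply $w_{M_k}(\lambda) = (-1)^{p-k} a_p \prod_{j \in M_k} z_j(\lambda)$, and since $a_p \neq 0$ and none of the $z_j(\lambda)$ vanish (by \eqref{ordering in magnitude}, $|z_1(\lambda)| > 0$), the factor $(w_{M_k}(\lambda))^n$ is nowhere zero on $\mathbb C \setminus \Gamma_k$. Similarly, I would note that $C_{M_k}$ is analytic and non-vanishing on $K$ provided $K$ contains no branch points, since by \eqref{defCM} it is a product of the $z_j(\lambda)^{q+k}$ (nonzero) and the factors $(z_j(\lambda) - z_l(\lambda))^{-1}$ with $j \in M_k$, $l \in \overline{M_k}$; these differences are nonzero precisely because $|z_l(\lambda)| < |z_j(\lambda)|$ strictly off $\Gamma_k$ for such index pairs (the gap in \eqref{ordering in magnitude} at position $q+k$). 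If $K$ does contain branch points, enlarge $K$ so that it contains no branch points on its boundary, or rather argue as in the proof of Proposition \ref{prop9}: the branch points of the $z_j$ are isolated, and $P_{k,n}$ is a polynomial so its zeros are points; it is enough to handle a neighbourhood basis of compact sets avoiding branch points and the finitely many branch points are themselves a set of measure zero under any counting normalization — actually cleaner is: a branch point of $a(z) - \lambda$ lies in $\mathbb C \setminus \Gamma_k$ only if it is not on $\Gamma_k$, so $K$ can be taken to avoid them since they are finite in number and we may shrink $K$ to any compact subset; combined with outer regularity this gives the general case.

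The key step: on a compact $K \subset \mathbb C \setminus \Gamma_k$ with no branch points, both $|w_{M_k}(\lambda)|$ and $|C_{M_k}(\lambda)|$ are bounded below by positive constants (compactness plus continuity plus non-vanishing). Hence $|(w_{M_k}(\lambda))^n C_{M_k}(\lambda)|$ is bounded below by $\delta^n$ for some $\delta$ not too small relative to the correction; but more to the point, the factor $1 + \OO(\exp(-c_K n))$ is bounded away from zero for $n$ large (its modulus exceeds $1/2$ once $\exp(-c_K n) < 1/2$), so the whole product is nonzero. Therefore $P_{k,n}(\lambda) \neq 0$ for all $\lambda \in K$ and all sufficiently large $n$, which gives $\mu_{k,n}(K) = 0$ for $n$ large.

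I do not anticipate a serious obstacle here; the only mildly delicate point is the treatment of branch points, which is already disposed of by the maximum-modulus argument inside the proof of Proposition \ref{prop9} showing the asymptotic holds uniformly on compacts including those with branch points — so on such a $K$ the estimate $|R_{k,n}(\lambda)| \le \exp(-c_K n) < 1/2$ still holds for $n$ large, and the same conclusion $P_{k,n} \neq 0$ on $K$ follows verbatim. I would present the proof in three lines: invoke Proposition \ref{prop9}, note that the leading factor is nonzero and the correction is $> 1/2$ in modulus for large $n$, conclude $P_{k,n}$ has no zeros in $K$, hence $\mu_{k,n}(K) = 0$.
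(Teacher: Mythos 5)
Your proof is correct and follows the same route as the paper's: invoke Proposition~\ref{prop9}, note that the uniform estimate forces $P_{k,n}$ to be nonvanishing on $K$ for $n$ large, and conclude $\mu_{k,n}(K)=0$ since $n\mu_{k,n}(K)$ counts the zeros of $P_{k,n}$ in $K$. One caution: the detour through ``enlarge/shrink $K$,'' ``branch points are measure zero under any counting normalization,'' and ``outer regularity'' is a dead end --- if $P_{k,n}$ had a zero exactly at a branch point in $K$, then $\mu_{k,n}(K)>0$, and no regularity argument would save you; you need the nonvanishing to hold at every point of $K$. Fortunately your closing paragraph abandons that line and gets it right: Proposition~\ref{prop9} is proved \emph{uniformly on all} compact $K\subset\C\setminus\Gamma_k$ (the maximum-modulus step handles interior branch points), $w_{M_k}$ and $C_{M_k}$ are analytic and nonvanishing on such $K$, and $1+R_{k,n}$ is bounded away from $0$ for $n$ large, so $P_{k,n}$ has no zeros in $K$. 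Also a minor slip: $w_{M_k}$ is not $(-1)^{p-k}a_p w_k$ divided by a product --- rather $w_{M_k}w_k=(-1)^{q-k}a_{-q}/1$ up to sign by Vieta --- but your direct formula $w_{M_k}=(-1)^{p-k}a_p\prod_{j\in M_k}z_j$ is what matters and is correct.
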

\begin{proof}
Let $K$ be a compact subset of $\C\setminus \Gamma_k$.  By
\eqref{eq:  Toeplitz asymptotics} it follows that $P_{k,n}$ has no
zeros in $K$ for large $n$. Since $n\mu_{k,n}(K)$ equals the number
of zeros of $P_{k,n}$ in $K$ the corollary follows.
\end{proof}

\begin{corollary}\label{cor2}
Let $k\in \{-q+1,\ldots,p-1\}$. We have that
\begin{align}
  \label{eq: convergence of Cauchy-transforms}
  \lim_{n\to \infty} \int_{\C} \frac{ {\rm
  d}\mu_{k,n}(x)}{x-\lambda}&=\int_{\Gamma_k}
  \frac{ {\rm d}\mu_k(x)}{x-\lambda},
\end{align}
 uniformly on compact subsets of $\mathbb C\setminus \Gamma_k$.
\end{corollary}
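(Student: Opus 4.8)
The plan is to derive the convergence of the Cauchy transforms directly from the asymptotic formula \eqref{eq: Toeplitz asymptotics} for $P_{k,n}$ together with the identity \eqref{Cauchytransform} of Proposition \ref{prop8}. The starting observation is that the Cauchy transform of $\mu_{k,n}$ is nothing but a logarithmic derivative: since $P_{k,n}(\lambda) = \gamma_{k,n} \prod (\lambda - \lambda_{j,n})$ over the zeros $\lambda_{j,n}$ of $P_{k,n}$ (counted with multiplicity), we have
\begin{equation} \label{eq:cauchy-as-logder}
    \int_{\C} \frac{{\rm d} \mu_{k,n}(x)}{x - \lambda}
    = - \frac{1}{n} \frac{P_{k,n}'(\lambda)}{P_{k,n}(\lambda)}.
\end{equation}
Thus I must show that $\tfrac{1}{n} P_{k,n}'/P_{k,n}$ converges, uniformly on compact subsets of $\C \setminus \Gamma_k$, to $- w_k'/w_k$, because by Proposition \ref{prop8} the latter equals $-\int {\rm d}\mu_k(x)/(x-\lambda)$, which is the claimed limit.

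The next step is to feed the asymptotics into \eqref{eq:cauchy-as-logder}. Fix a compact $K \subset \C \setminus \Gamma_k$. By Proposition \ref{prop9}, on $K$ we may write $P_{k,n}(\lambda) = (w_{M_k}(\lambda))^n C_{M_k}(\lambda)(1 + R_{k,n}(\lambda))$ with $R_{k,n}$ analytic on $K$ and $\sup_K |R_{k,n}| \le \exp(-c_K n)$. Enlarging $K$ slightly if necessary, I may assume this estimate holds on an open neighborhood of $K$, so by Cauchy's estimate $\sup_K |R_{k,n}'|$ is also exponentially small. Taking the logarithmic derivative of the factorized form gives
\begin{equation} \label{eq:logder-expansion}
    \frac{1}{n} \frac{P_{k,n}'(\lambda)}{P_{k,n}(\lambda)}
    = \frac{w_{M_k}'(\lambda)}{w_{M_k}(\lambda)}
    + \frac{1}{n} \frac{C_{M_k}'(\lambda)}{C_{M_k}(\lambda)}
    + \frac{1}{n} \frac{R_{k,n}'(\lambda)}{1 + R_{k,n}(\lambda)},
\end{equation}
valid on $K$ (the functions $w_{M_k}$ and $C_{M_k}$ are analytic and nonvanishing there). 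The last two terms on the right are $\OO(1/n)$ uniformly on $K$: the middle term because $C_{M_k}'/C_{M_k}$ is a fixed analytic function bounded on $K$, and the last term because numerator and denominator are uniformly controlled. It remains only to identify $w_{M_k}'/w_{M_k}$ with $- w_k'/w_k$. From \eqref{defwk} and the relation $w_k = (-1)^{p-k} a_p^{-1} w_{\{1,\ldots,k\}}$ noted after \eqref{defwk}, together with $w_{M_k}(\lambda) = (-1)^{p-k} a_p \prod_{j \in M_k} z_j(\lambda) = (-1)^{p-k} a_p \prod_{j=q+k+1}^{p+q} z_j(\lambda)$ and the fact that $\prod_{j=1}^{p+q} z_j(\lambda)$ equals a constant (the product of all roots of \eqref{algebraic equation}, namely $(-1)^{p+q} a_{-q}/a_p$, independent of $\lambda$), we get $w_{M_k}(\lambda) = \text{const} / w_k(\lambda)$, hence $w_{M_k}'/w_{M_k} = - w_k'/w_k$. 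Combining this with \eqref{eq:cauchy-as-logder} and \eqref{eq:logder-expansion} yields \eqref{eq: convergence of Cauchy-transforms}, uniformly on $K$.

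I expect the only real subtlety to be the passage from the exponential bound on $R_{k,n}$ itself to a bound on its derivative $R_{k,n}'$, which requires that the estimate of Proposition \ref{prop9} holds on a full neighborhood of the chosen compact set rather than just on the set — but this is immediate since any compact $K \subset \C \setminus \Gamma_k$ is contained in a slightly larger compact subset of the same open set, and then a Cauchy integral over a fixed small circle contained in that larger set bounds $R_{k,n}'$ by a constant times $\sup |R_{k,n}|$. A second, purely bookkeeping point is verifying that $P_{k,n}$ genuinely has no zeros on $K$ for large $n$ so that \eqref{eq:cauchy-as-logder} and \eqref{eq:logder-expansion} make sense; this is exactly the content of Corollary \ref{cor1}. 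Everything else is routine manipulation of logarithmic derivatives.
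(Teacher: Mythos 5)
Your argument is correct and follows essentially the same route as the paper: express the Cauchy transform of $\mu_{k,n}$ as $-\tfrac{1}{n}P_{k,n}'/P_{k,n}$, invoke Proposition \ref{prop9} to reduce to $w_{M_k}'/w_{M_k}$ up to an $\OO(1/n)$ error, identify $w_{M_k}'/w_{M_k}=-w_k'/w_k$ via the constancy of $\prod_{j=1}^{p+q}z_j(\lambda)$, and finish with Proposition \ref{prop8}. The only difference is that you spell out the Cauchy-estimate justification for the exponential smallness of $R_{k,n}'$, a step the paper leaves implicit in passing from \eqref{eq: Toeplitz asymptotics} to its equation for $P_{k,n}'/(nP_{k,n})$.
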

\begin{proof}
Let $K$ be a compact subset of $\C \setminus \Gamma_k$. Note that
\begin{equation}
  \int \frac{{\rm d}\mu_{k,n}(x)}{x-\lambda}  =\frac{1}{n}\sum_{\lambda_i
  \in \spec_k
T_n(a)}\frac{1}{\lambda_i-\lambda}=- \frac{P_{k,n}'(\lambda)}{n
  P_{k,n}(\lambda)}, \label{eq: cor: conv meas 1}
\end{equation}
for all $\lambda \in K$. With $M_k$ and $c_K$ as in Proposition
\ref{prop9} we obtain from \eqref{eq:  Toeplitz asymptotics} that
\begin{equation}  \label{eq: cor: conv meas 2}
  \frac{P_{k,n}'(\lambda)}{n P_{k,n}(\lambda)}=
  \frac{w'_{M_k}(\lambda)}{w_{M_k}(\lambda)}+\OO(1/n), \qquad n\to
  \infty,
\end{equation}
uniformly on $K$.
Let us rewrite the right-hand side of \eqref{eq: cor: conv meas 2}.
By expanding both sides of $z^{q}(a(z)-\lambda)=a_p\prod_{j=1}^{p+q}
(z-z_j(\lambda))$ and collecting the constant terms we obtain
\begin{equation}
  \prod_{j=1}^{p+q}(-z_j(\lambda))= \frac{a_{-q}}{a_p}.
\end{equation}
Since $\lambda \notin \Gamma_k$, we can split this product in two
parts, take the logarithmic derivative and use \eqref{defwk} and
\eqref{defwM} to obtain
\begin{equation} \label{eq: cor: conv meas 3}
0= \sum_{j=1}^{q+k} \frac{z_j'(\lambda)}{z_j(\lambda)}+
    \sum_{j=q+k+1}^{p+q} \frac{z_j'(\lambda)}{z_j(\lambda)}=
    \frac{w_k'(\lambda)}{w_k(\lambda)}+\frac{w_{M_k}'(\lambda)}{w_{M_k}(\lambda)}.
\end{equation}
Combining \eqref{eq: cor: conv meas 1}, \eqref{eq: cor: conv meas 2}
and \eqref{eq: cor: conv meas 3}, we obtain
\begin{equation} \label{Cauchylimit}
    \lim_{n\to \infty} \int \frac{ {\rm d}\mu_{k,n}(x)}{x-\lambda}=
    \frac{w_k'(\lambda)}{w_k(\lambda)}
\end{equation}
uniformly on $K$. Then \eqref{eq: convergence of Cauchy-transforms} follows from
    \eqref{Cauchylimit} and  \eqref{Cauchytransform}.
\end{proof}

Now we are ready for the proof of Theorem \ref{theorem3}.

\textit{Proof of Theorem \ref{theorem3}.}

First we prove (\ref{eq: th: convergence of measures}).
By Proposition \ref{eigenschappen Pnk} and the fact that
$\vec{\mu}$ is admissible, we get (see \eqref{norm-nuk})
\begin{equation}\label{upperbound measures}
    \mu_{k,n}(\C)=\frac{1}{n} \deg P_{k,n} \leq \mu_{k}(\C),
\end{equation}
for every $n\in \N$.

Let $C_0(\C)$ be the Banach space of continuous functions
on $\C$ that vanish at infinity. The dual space $C_0(\C)^*$
of $C_0(\C)$ is the space of regular complex Borel measures
on $\C$. By \eqref{upperbound measures} the sequence
$(\mu_{k,n})_{n \in \N}$ belongs to the ball in $C_0(\C)^*$ centered
at the origin with radius
$\mu_{k}(\C)$, which is weak$^*$ compact by the Banach-Alaoglu theorem.
Let $\mu_{k,\infty}$ be the limit of a weak$^*$ convergent
subsequence of $(\mu_{k,n})_{n\in \N}$.

By weak$^*$ convergence and Corollary \ref{cor1} we obtain that
 $\mu_{k,\infty}$ is supported on $\Gamma_k$.
Combining this with \eqref{eq: convergence of Cauchy-transforms} and
the weak$^{*}$ convergence leads to
\begin{align} \label{equality cauchy transforms}
\frac{1}{2\pi{\rm i}}\int_{\Gamma_k}
  \frac{{\rm d}\mu_k(x)}{x-\lambda} =
  \frac{1}{2\pi{\rm i}} \int_{\Gamma_k}
  \frac{{\rm d}\mu_{k,\infty}(x)}{x-\lambda},
\end{align}
for every $\lambda \in \C\setminus \Gamma_k$.  The integrals
in \eqref{equality cauchy transforms} are known in the literature as
the Cauchy transforms of the measures $\mu_k$ and $\mu_{k,\infty}$.
The Cauchy transform on $\Gamma_k$ is an injective map that maps
measures on $\Gamma_k$ to functions that are analytic in
$\C\setminus \Gamma_k$ (one can find explicit inversion
formulae, see for example the arguments in \cite[Theorem II.1.4]{Saff-Totik}
or the Stieltjes-Perron inversion formula in the special case
$\Gamma_k\subset \R$). Thus it follows from \eqref{equality cauchy transforms}
that $\mu_{k,\infty}=\mu_k$.
Therefore
\begin{equation} \label{weakstarconvergence}
    \lim_{n \to \infty} \mu_{k,n} = \mu_k
    \end{equation}
in the sense of weak$^*$ convergence in  $C_0(\C)^*$.
Thus \eqref{eq: th: convergence of measures} holds if $\phi$ is
a continuous function that vanishes at infinity.

From \eqref{upperbound measures} and \eqref{weakstarconvergence}
it also follows that
\begin{equation} \label{eq: no leaking}
  \lim_{n\to \infty} \mu_{k,n}(\C) = \mu_k(\C),
\end{equation}
Then the sequence $(\mu_{k,n})_{n\in \N}$ is tight. That is, for every $\eps>0$
there exists a compact $K$ such that $\mu_{k,n}(\C\setminus K)<\eps$
for every $n\in \N$. By a standard approximation argument one can now
show that \eqref{eq: th: convergence of measures} holds for
every bounded continuous function $\phi$ on $\C$.

Having \eqref{eq: th: convergence of measures} and Proposition \ref{prop9}, we can prove
\eqref{p en q limiet van eigenwaarde} as in
\cite[Theorem 11.17]{Bottcher-Grudsky}. Indeed, the sets $\liminf_{n\to \infty} \spec_k T_n(a)$
and $\limsup_{n\to\infty} \spec_k T_n(a)$ equal the support of $\mu_k$, which is $\Gamma_k$. $\square$

\section{Examples} \label{section6}

\subsection{Example 1}

As a first example consider the symbol $a$ defined by
\begin{equation}\label{symbol example 2}
  a(z)=\frac{4(z+1)^3}{27 z}.
\end{equation}
In this case we have $p=2$ and $q=1$. So we obtain two contours
$\Gamma_0$ and $\Gamma_1$ with two associated measures $\mu_0$ and
$\mu_1$. This example appeared  in
\cite{WalterenCoussementen}, in which the authors gave explicit
expressions for $\Gamma_0$ and $\mu_0$. The following proposition
also contains expressions for $\Gamma_1$ and $\mu_1$. In what
follows we take the principal branches for all fractional powers.
\begin{proposition} \label{propex1}
    With $a$ as in \eqref{symbol example 2}, we have that
  $\Gamma_0=[0,1]$ and
  \begin{equation} \label{ex: prop: densitymu0}
    {\rm
    d}\mu_0(\lambda)=\frac{\sqrt{3}}{4\pi}
    \frac{\left(1+\sqrt{1-\lambda}\right)^{1/3}+\left(1-\sqrt{1-\lambda}\right)^{1/3}}{ \lambda^{2/3}\sqrt{1-\lambda}} \ {\rm d} \lambda.
  \end{equation}
Moreover,  $\Gamma_1=(-\infty,0]$ and
\begin{equation}\label{ex: prop: densitymu1}{\rm
    d}\mu_1(\lambda)=\frac{\sqrt{3}}{{4\pi}}\frac{\left(1+\sqrt{1-\lambda}\right)^{1/3}-\left(\sqrt{1-\lambda}-1\right)^{1/3}}{ (-\lambda)^{2/3}\sqrt{1-\lambda}} \ {\rm d} \lambda.
    \end{equation}
\end{proposition}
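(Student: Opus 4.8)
The plan is to reduce everything to Cardano's formula for the cubic $a(z)-\lambda=0$. Writing this equation as $4(z+1)^3=27\lambda z$ and setting $y=z+1$ turns it into the depressed cubic $y^3-\frac{27\lambda}{4}y+\frac{27\lambda}{4}=0$, whose discriminant is a positive constant times $\lambda^2(\lambda-1)$; from $a'(z)=\frac{4(z+1)^2(2z-1)}{27z^2}$ one sees the only branch points are $\lambda=0$ (triple root $z=-1$) and $\lambda=1$ (double root $z=\frac12$), and the only $\lambda$ with $|z_1(\lambda)|=|z_2(\lambda)|=|z_3(\lambda)|$ is $\lambda=0$, since the product of the roots is $-1$ and their sum is $-3$, which forces all three onto the unit circle and hence onto $-1$. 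I would then locate $\Gamma_0$ and $\Gamma_1$ on the real line: for $\lambda\in(0,1)$, respectively $\lambda<0$, the discriminant is negative, so the cubic has one real root and a complex-conjugate pair, which has equal modulus automatically. A local expansion near the branch point $\lambda=0$ (where all roots collapse to $-1$) shows that for small $\lambda>0$ the conjugate pair is $\{z_1,z_2\}$ and for small $\lambda<0$ it is $\{z_2,z_3\}$; since the ordering cannot change inside $(0,1)$ or inside $(-\infty,0)$ without a further coincidence of moduli, we get $(0,1)\subset\Gamma_0$ and $(-\infty,0)\subset\Gamma_1$. (Comparing $z$ and $-z$ in $4(z+1)^3=27\lambda z$ and using $(z+1)^3+(1-z)^3=2+6z^2>0$ shows no two real roots ever have equal modulus, so no other real $\lambda$ lies on $\Gamma_0$ or $\Gamma_1$.) Finally, since $\Gamma_0$ and $\Gamma_1$ are invariant under complex conjugation and $z_1$, hence also $w_1=z_1z_2$ (see \eqref{defwk}), is real on $\R\setminus[0,1]$, a Schwarz reflection argument gives $z_{1-}(\lambda)=\overline{z_{1+}(\lambda)}$ for $\lambda\in(0,1)$ and $w_{1-}(\lambda)=\overline{w_{1+}(\lambda)}$ for $\lambda\in(-\infty,0)$.

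With these preliminaries the densities come straight out of \eqref{defmuk}, using $w_0=z_1$ and $w_1=z_1z_2$. On $(0,1)$ the reflection relation gives $\mathrm{d}\mu_0=\frac1\pi\,|z_{1+}|^{-2}\,\mathrm{Im}\!\big(\overline{z_{1+}}\,z_{1+}'\big)\,\mathrm{d}\lambda$, and on $(-\infty,0)$ likewise $\mathrm{d}\mu_1=\frac1\pi\,|w_{1+}|^{-2}\,\mathrm{Im}\!\big(\overline{w_{1+}}\,w_{1+}'\big)\,\mathrm{d}\lambda$. Into these I would substitute Cardano's formula: for $\lambda\in(0,1)$ the complex roots of $y^3-\frac{27\lambda}{4}y+\frac{27\lambda}{4}=0$ are $-\frac{u+v}{2}\pm\frac{\sqrt3}{2}\mathrm{i}(u-v)$ with $u=-\frac32\lambda^{1/3}(1-\sqrt{1-\lambda})^{1/3}$ and $v=-\frac32\lambda^{1/3}(1+\sqrt{1-\lambda})^{1/3}$, so that $z_{1\pm}(\lambda)=-1-\frac{u+v}{2}\pm\frac{\sqrt3}{2}\mathrm{i}(u-v)$ (in one of the two orderings, which does not affect the density); substituting and differentiating gives \eqref{ex: prop: densitymu0}. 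For $\lambda<0$ the relevant branches are $u=\frac32(-\lambda)^{1/3}(1+\sqrt{1-\lambda})^{1/3}$ and $v=-\frac32(-\lambda)^{1/3}(\sqrt{1-\lambda}-1)^{1/3}$, and the same computation applied to $z_1z_2$ yields \eqref{ex: prop: densitymu1}. The main technical nuisance here is the bookkeeping of branches — choosing the cube roots and the square root $\sqrt{1-\lambda}$ consistently so that $z_1$ and $z_1z_2$ are single-valued off the conjectured cuts and reduce to the principal-branch expressions on the real intervals; the differentiation itself is routine.

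It remains to upgrade the inclusions $(0,1)\subset\Gamma_0$ and $(-\infty,0)\subset\Gamma_1$ to equalities, which I would do by computing the total masses of the densities just obtained. The substitution $\lambda=1-t^2$ turns $\int_0^1\mathrm{d}\mu_0$ into $\frac{\sqrt3}{2\pi}\int_0^1\!\big((1-t)^{-2/3}(1+t)^{-1/3}+(1-t)^{-1/3}(1+t)^{-2/3}\big)\,\mathrm{d}t$, and the further substitutions $u=\frac{1+t}{2}$, $w=\frac{1-u}{u}$, $w=s^3$ collapse this to $\frac{\sqrt3}{2\pi}\int_0^1\frac{3\,\mathrm{d}s}{s^2-s+1}=\frac{\sqrt3}{2\pi}\cdot\frac{2\pi}{\sqrt3}=1$; an analogous computation gives $\int_{-\infty}^0\mathrm{d}\mu_1=\frac12$. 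By Theorem~\ref{theorem1}, $\mu_0$ is a positive measure on $\Gamma_0$ of total mass $1$ and $\mu_1$ a positive measure on $\Gamma_1$ of total mass $\frac12$, so these computations force $\supp\mu_0\subset[0,1]$ and $\supp\mu_1\subset(-\infty,0]$. Since $\supp\mu_k=\Gamma_k$ by Theorem~\ref{theorem3}, comparing with the inclusions already established yields $\Gamma_0=[0,1]$ and $\Gamma_1=(-\infty,0]$. (Alternatively one can establish the same equalities from the structure theory of Section~\ref{section3} — $\Gamma_0$ is compact with all exceptional points in $\{0,1\}$ and exactly one arc issuing from each, and $\Gamma_1$ has $0$ as its only finite exceptional point and is asymptotic at infinity to the single ray $(-\infty,0)$ — but the mass computation is the most direct route.)
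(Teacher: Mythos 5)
Your proposal is correct and rests on the same Cardano-based backbone as the paper's proof, but it routes around two steps of the paper in a genuinely different way. The paper first shows $\Gamma_0\cup\Gamma_1\subseteq\R$ by a short geometric argument tailored to the symbol: if $y_1\neq y_2$ satisfy $a(y_1)=a(y_2)=\lambda$ with $|y_1|=|y_2|$, then $4(z+1)^3/(27z)$ forces $|y_1+1|=|y_2+1|$ as well, so $y_1$ and $y_2$ lie on the intersection of two circles, hence $y_1=\overline{y_2}$ and $\lambda=\overline{\lambda}$; after that the paper simply writes out Cardano's formulas on $[0,1]$ and $(-\infty,0]$, states that ``one can check'' the modulus ordering, and quotes \cite{WalterenCoussementen} for \eqref{ex: prop: densitymu0}, with \eqref{ex: prop: densitymu1} ``following in a similar way.'' You omit the circle-intersection step entirely; instead you establish only the inclusions $(0,1)\subset\Gamma_0$, $(-\infty,0)\subset\Gamma_1$ (via discriminant sign, a local expansion at $\lambda=0$, and the observation that three roots of equal modulus would all have to equal $-1$), and then close the argument globally by integrating the densities you compute and invoking $\mu_0(\Gamma_0)=1$, $\mu_1(\Gamma_1)=\tfrac12$, $\supp\mu_k=\Gamma_k$ from Theorems~\ref{theorem1} and~\ref{theorem3}. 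This mass argument is valid and has the advantage of simultaneously ruling out non-real pieces of $\Gamma_0,\Gamma_1$ and providing an independent check on the density formulas; the trade-off is that your identification of the contours becomes logically dependent on the density computation (and on the branch bookkeeping therein), whereas in the paper the contours are pinned down before the densities are needed. Your direct derivation of both densities from the Schwarz-reflection form $\mathrm{d}\mu_k=\tfrac1\pi\,\mathrm{Im}\bigl(w_{k+}'/w_{k+}\bigr)\,\mathrm{d}\lambda$ is more self-contained than the paper's citation, and the explicit total-mass evaluation is a genuine addition that the paper does not carry out.
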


\begin{figure}[t]
\centering
     \includegraphics[scale=0.25,angle=-90]{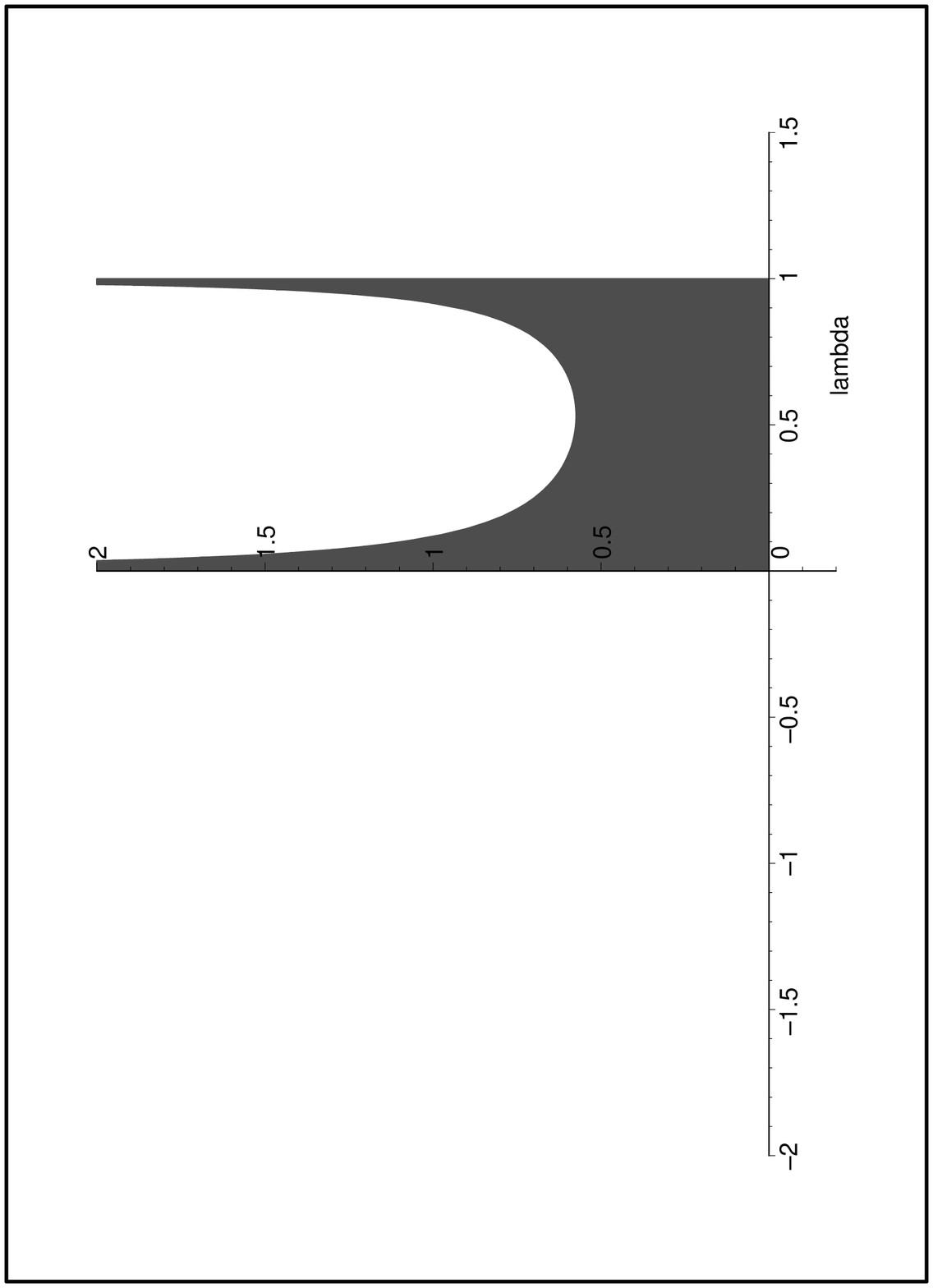} \quad 
     \includegraphics[scale=0.25,angle=-90]{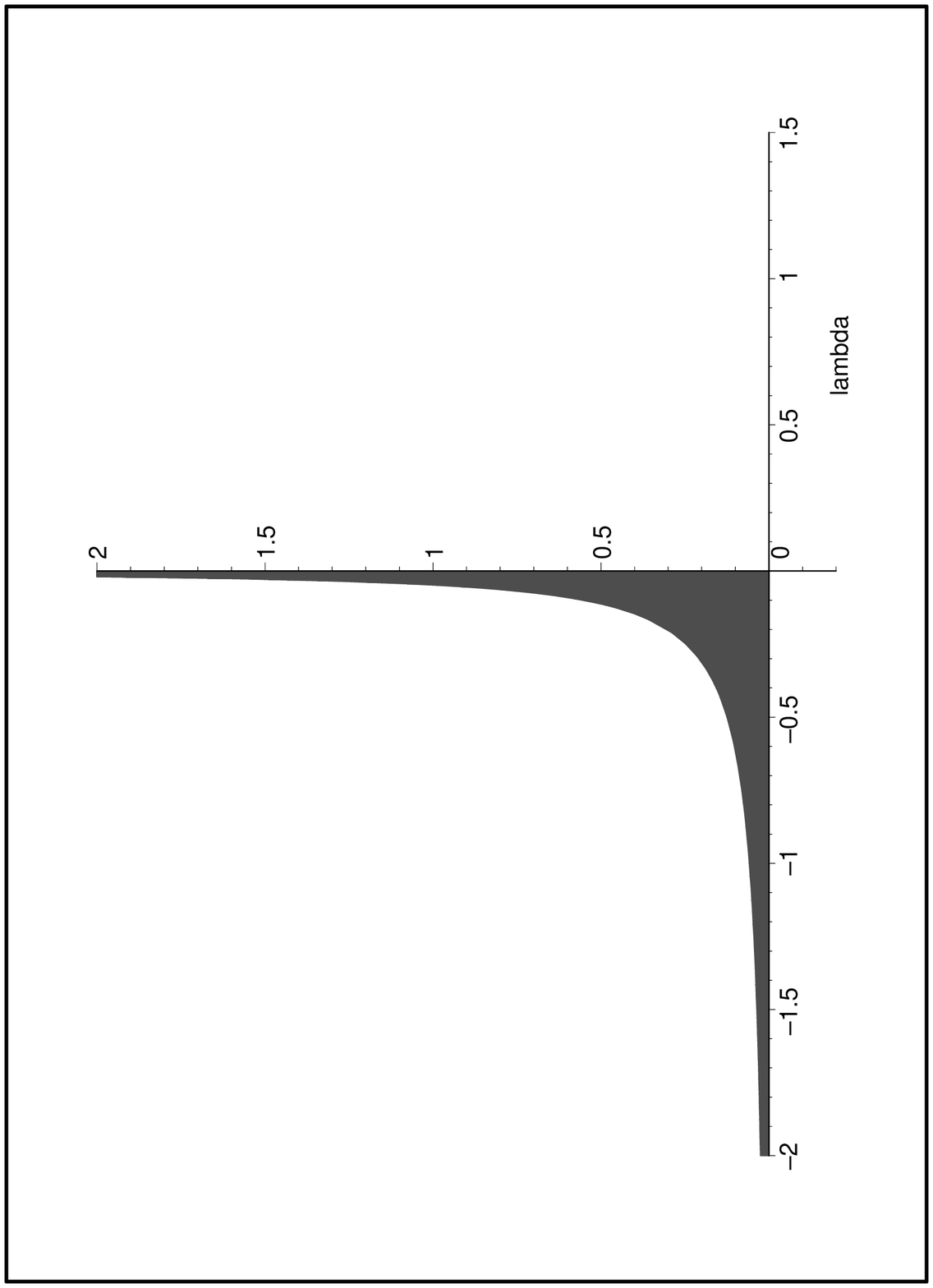}
    \caption{Illustration for Example 1: The densities of the measures $\mu_0$ (left) and $\mu_1$
    (right) for  $a=\frac{4(z+1)^3}{27 z}$.}
    \label{ex1: densities}
 \end{figure}

\begin{proof}
A straightforward calculation shows that
$\lambda=0$ and $\lambda=1$ are the branch points.

Let $\lambda\in \Gamma_0\cup \Gamma_1$ and assume that $\lambda$ is
not a branch point. There exist $y_1, y_2\in \C$ such that $y_1\neq
y_2$, $|y_1|=|y_2|$ and $a(y_1)=a(y_2)=\lambda$. Then it
follows from \eqref{symbol example 2} that
$|y_1+1|=|y_2+1|$. Therefore $y_1$ and $y_2$ are intersection
points of a circle centered at $-1$ and a circle centered at the
origin. Since $y_1\neq y_2$, this means that $y_1=\overline{y_2}$
and therefore $\lambda=a(y_1)=a(\overline{y_2})=\overline{a(y_1)}=\overline{\lambda}$,
so that $\lambda \in \R$. A further investigation shows that $a(z) - \lambda$
has $3$ different real zeros if $\lambda>1$. If $\lambda<1$ and
$\lambda\neq 0$ then $a(z) - \lambda$ has precisely $1$ real zero and $2$
conjugate complex zeros.  Therefore,
$\Gamma_0\cup\Gamma_1=(-\infty,1]$.

Now we will show that  $\Gamma_0=[0,1]$ and $\Gamma_1=(-\infty,0]$.
By Cardano's formula the solutions of the algebraic equation
$a(z)=\lambda$ are given by
\begin{equation}\label{voorbeeld 2 zjlambdagrdan 0}
  z_j(\lambda)=-1-\frac{3\lambda^{1/3}}2\left(\omega^j\left(1+(1-\lambda)^{1/2}\right)^{1/3}+\omega^{-j}
  \left(1-(1-\lambda)^{1/2}\right)^{1/3}\right),
\end{equation}
for $\lambda\in [0,1]$ and
\begin{equation}\label{voorbeeld 2 zjlambdakldan}
  z_j(\lambda)=-1+\frac{3(-\lambda)^{1/3}}2\left(\omega^{j+2}\left(1+(1-\lambda)^{1/2}\right)^{1/3}-\omega^{-j-2}
  \left((1-\lambda)^{1/2}-1\right)^{1/3}\right),
\end{equation}
for $\lambda\in(-\infty,0]$. Here $\omega={\rm e}^{2\pi {\rm i}/3}$. One can check that
$|z_1(\lambda)|=|z_2(\lambda)|<|z_3(\lambda)|$ for $\lambda \in
(0,1]$ and $|z_1(\lambda)|<|z_2(\lambda)|=|z_3(\lambda)|$ for
$\lambda \in (-\infty,0)$. Moreover, for  $\lambda=0$ we have
$z_1(0)=z_2(0)=z_3(0)=-1$.  Therefore $\Gamma_0=[0,1]$ and
$\Gamma_1=(-\infty,0]$.

\begin{figure}[t]
 \centering
    \includegraphics[scale=0.5]{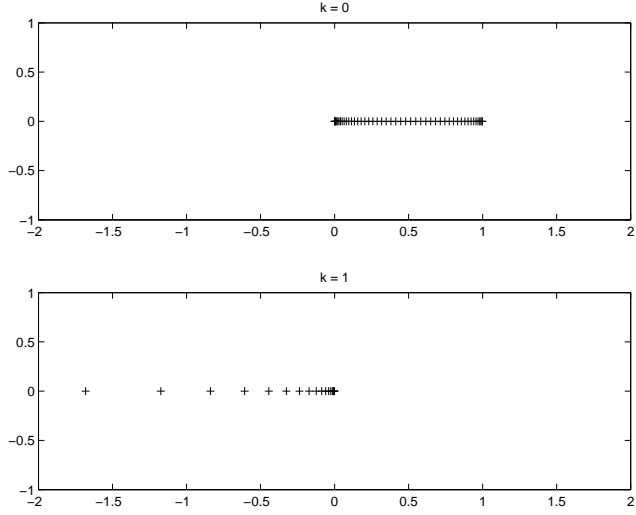}
    \caption{Illustration for Example 1: The spectrum $\spec T_{50}(a)$ (top) and the
    generalized spectrum ${\spec}_1 T_{50}(a)$ (bottom), for the symbol $a=\frac{4 (z+1)^3}{27
    z}$.}\label{ex1: eigenvalues}
  \end{figure}

The density \eqref{ex: prop: densitymu0} was already given in
\cite{WalterenCoussementen} and  \eqref{ex: prop: densitymu1}
follows in a similar way.
\end{proof}

In Figure \ref{ex1: densities} we plotted the densities of $\mu_0$
and $\mu_1$. Note that, due to the interaction between $\mu_0$ and
$\mu_1$ in the energy functional, there is more mass of $\mu_0$ near
$0$ than near $1$. We also see that the singularities of the densities
for $\mu_0$ and $\mu_1$ are of order $\OO(|\lambda|^{-2/3})$ for
$\lambda\to 0$, whereas the typical nature of a singularity in each
of the measures  is a square root singularity. The stronger singularity is
due to the fact that  $a(z)-\lambda$ has a triple root for $\lambda=0$.

In Figure \ref{ex1: eigenvalues} we plotted the eigenvalues and
generalized eigenvalues for $n=50$. It is known that the eigenvalues
are simple and positive  \cite[\S 2.3]{WalterenCoussementen}, which
we also see in Figure \ref{ex1: eigenvalues}.

\subsection{Example 2}

For the symbol $a$ defined by
\begin{equation}
  a(z)=z^2+z+z^{-1}+z^{-2}.
\end{equation}
we have $p=q=2$. From the symmetry $a(1/z)=a(z)$ it follows that
$\Gamma_{-1}=\Gamma_1$ and $\mu_{-1}=\mu_1$.

The interesting feature of this example is that the contours
$\Gamma_0$ and $\Gamma_{\pm 1}$ overlap. To be precise, the interval
$(-9/4,0)$ is contained in all three contours $\Gamma_{-1},\Gamma_0$
and $\Gamma_1$.  This can be most easily seen by investigating the
image of the unit circle under $a$. Consider
\begin{equation}
  a({\rm e}^{{\rm i} t})=2\cos 2t +2\cos t, \qquad \textrm{ for } t \in [0,2\pi).
\end{equation}
A straightforward analysis shows that for every
$\lambda \in (-9/4,0)$, the equation $a({\rm e}^{{\rm i} t}) =\lambda$
has four different solutions for $t$ in $[0,2\pi)$. This means that
the four solutions of the equation $a(z)=\lambda$ are on the unit circle,
and so in particular have the same absolute value.

The equation $a(z)-\lambda=0$ can be explicitly solved by
introducing the variable $y=z+1/z$. In exactly the same way as in
the previous example one can obtain the limiting measures. We will
not give the explicit formulas, but only plot the densities in
Figure \ref{ex3: densities}. The branch points are
$\lambda=-9/4$, $\lambda=0$ and $\lambda=4$. The contours are given by
\begin{equation}
    \Gamma_0  =[-9/4,4], \qquad
    \Gamma_{-1}  =\Gamma_1=(-\infty,0].
\end{equation}

\begin{figure}[t]
\centering
     \includegraphics[scale=0.25,angle=-90]{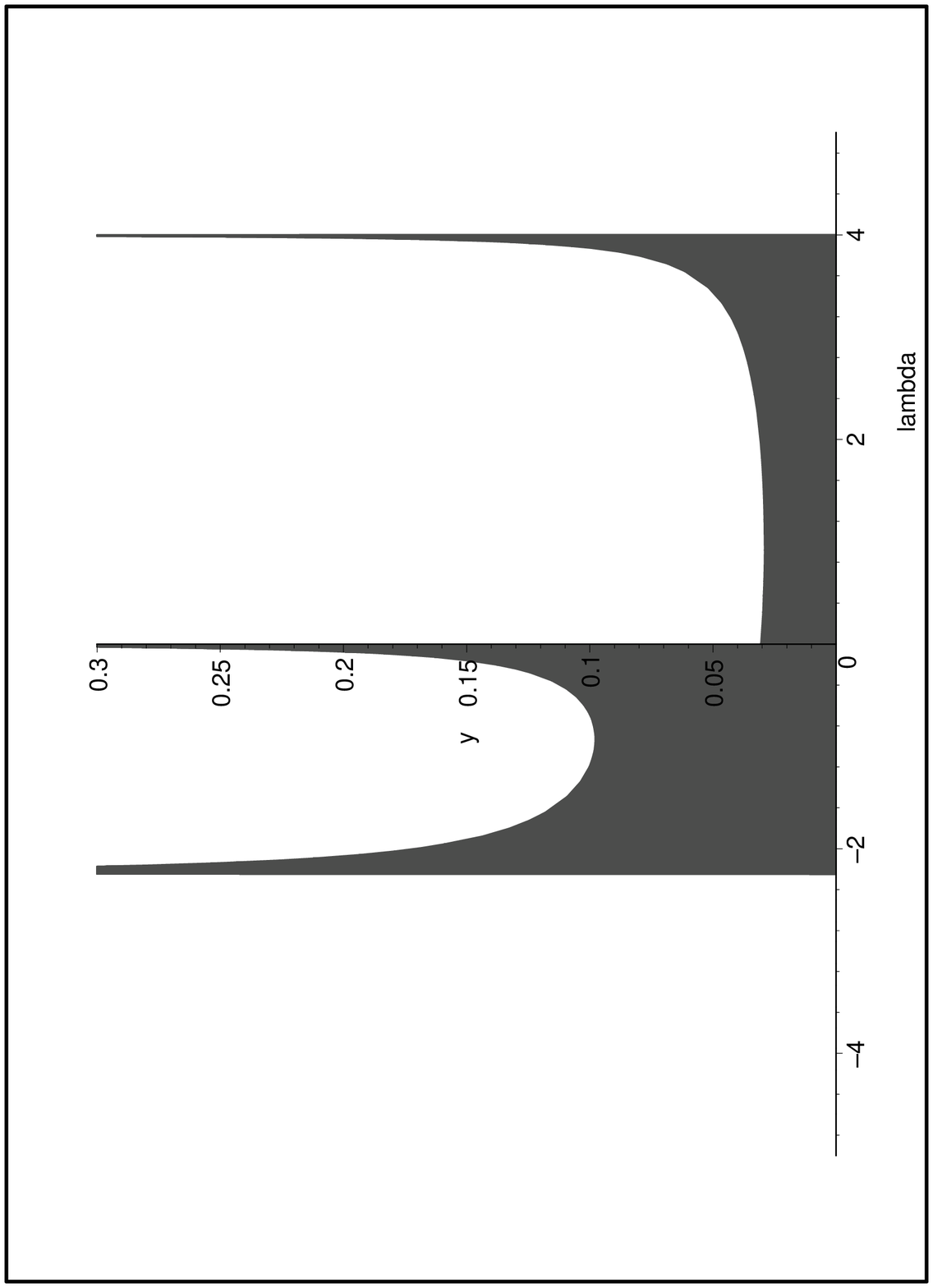} \quad 
     \includegraphics[scale=0.25,angle=-90]{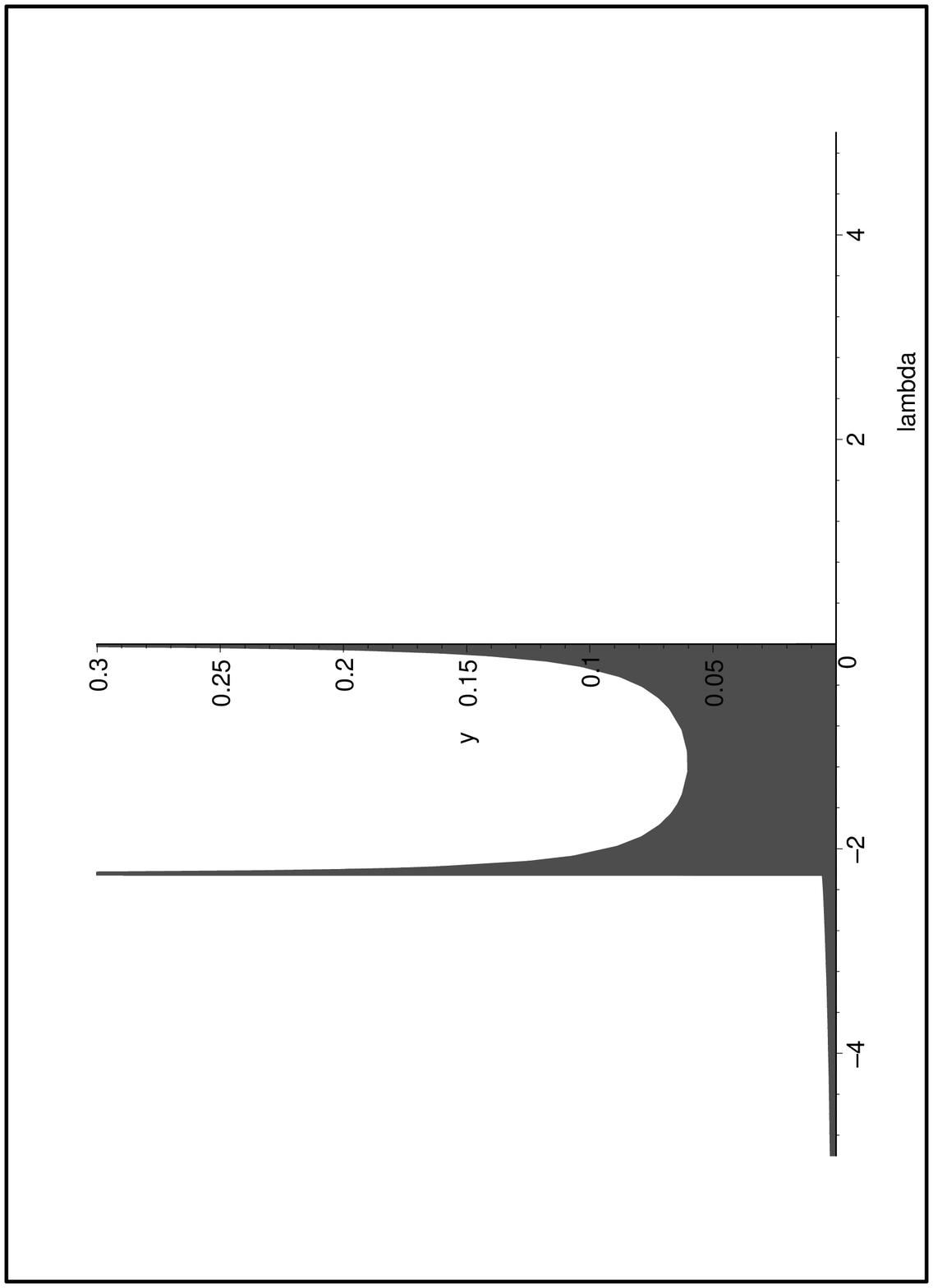} 
    \caption{Illustration for Example 2: The densities of the measures $\mu_0$ (left) and $\mu_1=\mu_{-1}$
    (right) for  $a(z)=z^2+z+z^{-1}+z^{-2}$.}
    \label{ex3: densities}
 \end{figure}

The densities have singularities at the branch points in
the interior of their supports. The singularities are only felt at one
side of the branch points. Consider first $\mu_0$, whose density
has a singularity at $0$. However the limiting value when
$0$ is approached from the positive real axis is finite. The change
in behavior of $\mu_0$ has to do with the fact that $z_1$ is
analytic on $(0,4)$ but not on $(-9/4,0)$. Therefore we find by
(\ref{maat q}) that
\begin{equation}
  {\rm d} \mu_0(\lambda)=\frac{1}{2\pi {\rm i}}
  \left(\frac{{z_1}_+'(\lambda)}{{z_1}_+(\lambda)}+\frac{{z_2}_+'(\lambda)}{{z_2}_+(\lambda)}-\frac{{z_1}_-'(\lambda)}{{z_1}_-(\lambda)}-\frac{{z_2}_-'(\lambda)}{{z_2}_-(\lambda)}\right){\rm
  d} \lambda
\end{equation}
on $(-9/4,0)$, and
\begin{equation}{\rm d}
\mu_0(\lambda)=\frac{1}{2\pi {\rm i}}
  \left(\frac{{z_2}_+'(\lambda)}{{z_2}_+(\lambda)}-\frac{{z_2}_-'(\lambda)}{{z_2}_-(\lambda)}\right){\rm
  d} \lambda
  \end{equation}
  on $(0,4)$.

For $\mu_{-1}=\mu_1$ a similar phenomenon happens at
$\lambda=-9/4$. This is a consequence of the fact that $z_1$ has an
analytic continuation into $z_2$ when we cross $(-\infty,-9/4)$, but
it has an analytic continuation into $z_4$ when we cross $(-9/4,0)$.

\subsection{Example 3}
As a final example, consider the symbol
\begin{equation}
  a(z)=z^p+z^{-q},
\end{equation}
with $p,q\geq 1$ and ${\textrm{gcd}} (p,q)=1$. This example
appeared in \cite{Schmidt-Spitzer}, where the authors  mentioned
that $\Gamma_0$ is given by the star
\begin{equation} \label{star}
  \Gamma_0=\{r \omega^j \mid  j=1,\ldots,p+q, \, 0\leq r \leq R\}
\end{equation}
with $ \omega={\rm e}^{ 2 \pi {\rm
  i}/(p+q)}$ and $R=(p+q)p^{-p/(p+q)}q^{-q/(p+q)}$. The other contours also have  a star shape,
  namely
  \begin{equation}
    \Gamma_k= \{(-1)^k  r \omega^j \mid  j=1,\ldots,p+q, \, 0\leq  r <\infty\}
 \end{equation}
 for $k\neq 0$. Note that the star $\Gamma_k$ for $k \neq 0$ is unbounded.

 In Figure \ref{figure ex3} we plotted the eigenvalues and the
generalized eigenvalues for $p=2$, $q=3$ and  $n=50$. All the
(generalized) eigenvalues appear to lie exactly on the contours.
In  the special case $p=1$ it is known that the eigenvalues of $T_n(a)$
lie indeed  precisely on the star \eqref{star} and
are all simple (possibly except for $0$)  \cite[Theorem 3.2]{eiermannvarga},
see also \cite{Kuijlaars} for a connection to Chebyshev-type quadrature.

\begin{figure}[t]
  \centering
  \includegraphics[scale=0.65]{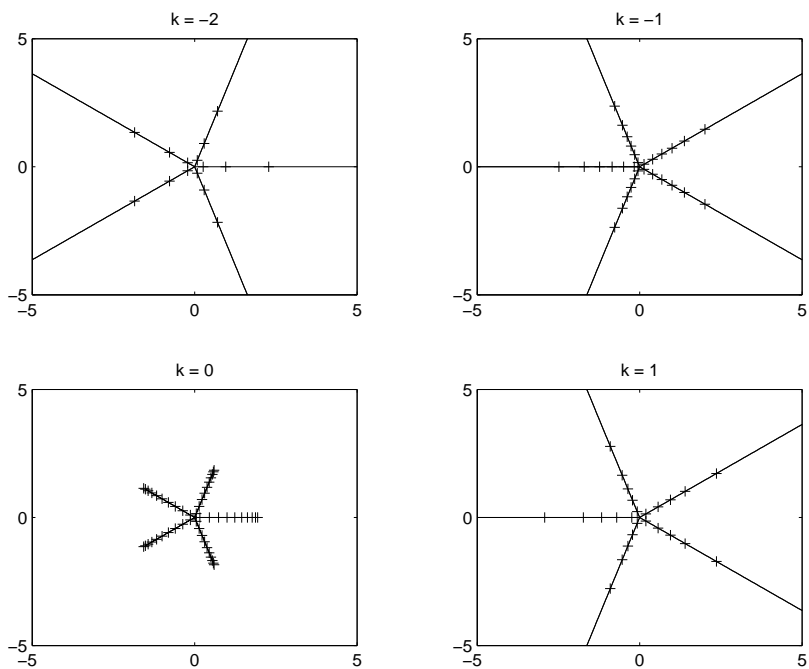}
  \caption{Illustration for Example 3: The contours $\Gamma_k$ and the eigenvalues  and
  generalized eigenvalues for  $T_{50}(a)$ for the symbol $a=z^2+z^{-3}$.}\label{figure ex3}
\end{figure}

\subsection{Numerical stability}

In Figure \ref{ex1: eigenvalues} and Figure \ref{figure ex3} the
eigenvalues and the generalized eigenvalues of $T_{50}(a)$ were
computed numerically. To control the stability of the numerical
computation of the eigenvalues one needs to analyze the
pseudo-spectrum. For banded Toeplitz matrices the pseudo-spectrum is
well understood  \cite[Th. 7.2]{trefethenembree}. To this date, a
similar analysis of the pseudo-spectrum for the matrix pencil
$(T_n(z^{-k}a),T_n(z^{-k}))$ has not been carried out. See \cite[\S
X.45]{trefethenembree} for some remarks on the pseudo-spectrum for
the generalized eigenvalue problem.

\bibliographystyle{amsplain}

\end{document}